\def\A{\mathbf A}
\def\E{\mathbf E}
\def\C{\mathbb C}
\def\End{{\rm End}}
\def\H{\mathcal H}
\def\I{\mathbf I}
\def\K{\mathfrak K_\omega}
\def\Mat{{\rm Mat}}
\def\M{\mathcal M}
\def\T{\mathcal T}
\def\U{{\rm U}(\mathfrak{sl}_2)}
\def\e{\varepsilon}
\numberwithin{equation}{section}
\newtheorem{Theorem}{Theorem}[section]
\newtheorem*{Theorem*}{Theorem}
\newtheorem{Corollary}[Theorem]{Corollary}
\newtheorem{Lemma}[Theorem]{Lemma}
\newtheorem{Proposition}[Theorem]{Proposition}
 { \theoremstyle{definition}
\newtheorem{Definition}[Theorem]{Definition}

 }
\begin{document}
\allowdisplaybreaks

\newcommand{\arXivNumber}{2106.06857}

\renewcommand{\PaperNumber}{017}

\FirstPageHeading

\ShortArticleName{The Clebsch--Gordan Rule for $U(\mathfrak{sl}_2)$, the Krawtchouk Algebras and the Hamming Graphs}

\ArticleName{The Clebsch--Gordan Rule for $\boldsymbol{U(\mathfrak{sl}_2)}$,\\ the Krawtchouk Algebras and the Hamming Graphs}

\Author{Hau-Wen HUANG}

\AuthorNameForHeading{H.-W.~Huang}

\Address{Department of Mathematics, National Central University, Chung-Li 32001, Taiwan}
\Email{\href{mailto:hauwenh@math.ncu.edu.tw}{hauwenh@math.ncu.edu.tw}}

\ArticleDates{Received October 03, 2022, in final form March 22, 2023; Published online April 04, 2023}

\Abstract{Let $D\geq 1$ and $q\geq 3$ be two integers. Let $H(D)=H(D,q)$ denote the $D$-dimensional Hamming graph over a $q$-element set. Let $\T(D)$ denote the Terwilliger algebra of $H(D)$. Let $V(D)$ denote the standard $\T(D)$-module. Let $\omega$ denote a complex scalar. We~consider a unital associative algebra~$\K$ defined by generators and relations. The generators are $A$ and $B$. The relations are $A^2 B-2 ABA +B A^2 =B+\omega A$, $B^2A-2 BAB+AB^2=A+\omega B$. The algebra~$\K$ is the case of the Askey--Wilson algebras corresponding to the Krawtchouk polynomials. The algebra~$\K$ is isomorphic to $\U$ when $\omega^2\not=1$. We view~$V(D)$ as a~\smash{$\mathfrak{K}_{1-\frac{2}{q}}$}-module. We apply the Clebsch--Gordan rule for $\U$ to decompose~$V(D)$ into a~direct sum of irreducible $\T(D)$-modules.}

\Keywords{Clebsch--Gordan rule; Hamming graph; Krawtchouk algebra; Terwilliger algebra}

\Classification{05E30; 16G30; 16S30; 33D45}

\section{Introduction}\label{s:introduction}

Throughout this paper, we adopt the following conventions: Fix an integer $q\geq 3$. Let $\C$ denote the complex number field. An algebra is meant to be a unital associative algebra. An algebra homomorphism is meant to be a unital algebra homomorphism. A subalgebra has the same unit as the parent algebra.
In an algebra the commutator $[x,y]$ of two elements $x$ and $y$ is defined as
$[x,y]=xy-yx$. Note that every algebra has a Lie algebra structure with Lie bracket given by the commutator.

Recall that $\mathfrak{sl}_2(\C)$ is a three-dimensional Lie algebra over $\C$ with a basis $e$, $f$, $h$ satisfying
\begin{gather*}
[h,e]=2e,\qquad
[h,f]=-2f,\qquad
[e,f]=h.
\end{gather*}

\begin{Definition}\label{defn:U}
The {\it universal enveloping algebra $\U$ of $\mathfrak{sl}_2$} is an algebra over $\C$ generated by $E$, $F$, $H$ subject to the relations
\begin{gather*}
[H,E]=2E,\qquad
[H,F]=-2F,\qquad
[E,F]=H.
\end{gather*}
\end{Definition}

Using Definition \ref{defn:U}, it is straightforward to verify the following lemma:

\begin{Lemma}\label{lem:Vn_U}
Given any integer $n\geq 0$ there exists an $(n+1)$-dimensional $\U$-module $L_n$
that has a basis $\{v_i\}_{i=0}^n$ such that
\begin{alignat*}{4}
&E v_i=(n-i+1) v_{i-1}\qquad&&
\text{for}\ i=1,2,\dots,n,\qquad&&
E v_0=0,&
\\
&F v_i=(i+1) v_{i+1}\qquad&&
\text{for}\ i=0,1,\dots,n-1,\qquad&&
F v_n=0,&
\\
&H v_i=(n-2i) v_i\qquad &&\text{for}\ i=0,1,\dots,n.&&&
\end{alignat*}
\end{Lemma}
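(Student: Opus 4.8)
The plan is to construct the module $L_n$ explicitly on an $(n+1)$-dimensional vector space with basis $\{v_i\}_{i=0}^n$, declare the action of $E$, $F$, $H$ by the stated formulas (with the conventions $v_{-1}=0$ and $v_{n+1}=0$ absorbing the boundary cases), extend linearly, and then verify that these three linear operators satisfy the defining relations of $\U$ from Definition~\ref{defn:U}. Since $\U$ is presented by generators and relations, checking the relations on a basis suffices to guarantee that the assignment extends to an algebra homomorphism $\U\to\End(L_n)$, i.e.\ that $L_n$ is a genuine $\U$-module.

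Concretely, I would compute each of the three commutators applied to a generic basis vector $v_i$ and match them against the right-hand sides. For $[H,E]v_i$: applying $E$ then $H$ gives $(n-i+1)(n-2(i-1))v_{i-1}$, while applying $H$ then $E$ gives $(n-2i)(n-i+1)v_{i-1}$; the difference is $(n-i+1)\bigl((n-2i+2)-(n-2i)\bigr)v_{i-1}=2(n-i+1)v_{i-1}=2Ev_i$. For $[H,F]v_i$: the analogous computation yields $(i+1)\bigl((n-2i-2)-(n-2i)\bigr)v_{i+1}=-2(i+1)v_{i+1}=-2Fv_i$. For $[E,F]v_i$: we get $EFv_i=(i+1)(n-i)v_i$ and $FEv_i=(n-i+1)\,i\,v_i$, so $[E,F]v_i=\bigl((i+1)(n-i)-i(n-i+1)\bigr)v_i=(n-2i)v_i=Hv_i$. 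In each case one must separately confirm that the boundary instances ($i=0$ and $i=n$, where some terms vanish by the stated exceptional rules) are consistent with the generic formula; this is exactly why the convention $v_{-1}=v_{n+1}=0$ makes the uniform computation legitimate.

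There is essentially no serious obstacle here: the statement is flagged in the excerpt as "straightforward to verify," and the entire content is the three short index computations above together with the boundary bookkeeping. The only point requiring a word of care is the logical step that verifying the relations on a basis of operators is enough to define a module structure, which follows immediately from the universal property of an algebra defined by generators and relations. I would also remark that $L_n$ is in fact irreducible (the coefficients $n-i+1$ and $i+1$ are nonzero in the relevant ranges, so $E$ and $F$ move between weight spaces without obstruction), though irreducibility is not asserted in the lemma and need not be proved to establish the claim.
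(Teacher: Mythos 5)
Your proposal is correct and is exactly the routine verification the paper has in mind (the paper offers no written proof, simply noting that the lemma is straightforward from Definition~\ref{defn:U}): you define the operators by the stated formulas with $v_{-1}=v_{n+1}=0$, check the three commutator relations on each basis vector, and invoke the presentation of $\U$ by generators and relations to conclude the assignment extends to a module structure. All three index computations and the boundary cases check out, so nothing further is needed.
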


Note that the $\U$-module $L_n$ is irreducible for any integer $n\geq 0$. Furthermore, the finite-dimensional irreducible $\U$-modules are classified as follows:

\begin{Lemma}\label{lem:classification_Umodule}
For any integer $n\geq 0$, each $(n+1)$-dimensional irreducible $\U$-module is isomorphic to $L_n$.
\end{Lemma}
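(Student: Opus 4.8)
The plan is to reconstruct, inside the purely algebraic setting of $\U$, the classical highest-weight argument for $\mathfrak{sl}_2$. Let $V$ be an $(n+1)$-dimensional irreducible $\U$-module. Since $\C$ is algebraically closed and $\dim V<\infty$, the operator $H$ on $V$ has an eigenvector; call a nonzero $v\in V$ with $Hv=\lambda v$ a weight vector of weight $\lambda$. From $[H,E]=2E$ and $[H,F]=-2F$ one checks that $E$ (resp.\ $F$) sends a weight vector of weight $\lambda$ either to $0$ or to a weight vector of weight $\lambda+2$ (resp.\ $\lambda-2$). Because weight vectors of distinct weights are linearly independent and $V$ is finite-dimensional, there is a weight $\mu$ of $V$ with $\mu+2$ not a weight, and applying $E$ repeatedly to any weight vector produces a nonzero $w\in V$ with $Hw=\mu w$ and $Ew=0$.

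Next I would analyze the vectors $w_k=F^kw$ for $k\geq 0$. A routine induction from $[E,F]=H$ and $[H,F]=-2F$ gives $Hw_k=(\mu-2k)w_k$ for all $k\geq 0$ and $Ew_k=k(\mu-k+1)w_{k-1}$ for $k\geq 1$. The nonzero $w_k$ have pairwise distinct $H$-eigenvalues, hence are linearly independent, so by finite-dimensionality there is a largest $m$ with $w_m\neq 0$, and $w_{m+1}=0$. The span $W$ of $w_0,w_1,\dots,w_m$ is invariant under $H$, under $F$ (clear), and under $E$ (by the displayed formula for $Ew_k$), so $W$ is a nonzero submodule; irreducibility forces $W=V$, whence $m+1=\dim V=n+1$, i.e.\ $m=n$. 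Applying $E$ to the relation $w_{m+1}=0$ yields $0=Ew_{m+1}=(m+1)(\mu-m)w_m$, and since $w_m\neq 0$ and $m+1\neq 0$ we conclude $\mu=m=n$.

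Finally I would match the action with that of $L_n$. With $\mu=n$ the identities above read $Hw_k=(n-2k)w_k$ and $Ew_k=k(n-k+1)w_{k-1}$. Define $v_i=\frac{1}{i!}w_i$ for $i=0,1,\dots,n$; these form a basis of $V$. Then $Hv_i=(n-2i)v_i$; $Fv_i=\frac{1}{i!}w_{i+1}=(i+1)v_{i+1}$ for $i<n$ and $Fv_n=\frac{1}{n!}w_{n+1}=0$; and $Ev_i=\frac{1}{i!}Ew_i=\frac{i(n-i+1)}{i!}w_{i-1}=(n-i+1)v_{i-1}$ for $i\geq 1$ while $Ev_0=Ew=0$. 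These are precisely the relations defining $L_n$ in Lemma \ref{lem:Vn_U}, so sending $v_i$ to the $i$-th basis vector of $L_n$ extends to a $\U$-module isomorphism from $V$ onto $L_n$.

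The only genuine subtlety is pinning down the highest weight: one must use finite-dimensionality twice (to produce $w$ with $Ew=0$ and to terminate the chain $w_k$) and irreducibility once (to force the chain to exhaust $V$), and then extract $\mu=n$ from the terminating relation $Ew_{m+1}=0$. Everything else reduces to the two bookkeeping identities $Hw_k=(\mu-2k)w_k$ and $Ew_k=k(\mu-k+1)w_{k-1}$, both obtained by a straightforward induction from the defining relations of $\U$.
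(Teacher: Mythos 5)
Your proof is correct: the paper itself offers no argument for this lemma, simply citing Kassel (Section V.4), and your highest-weight argument—producing a vector $w$ with $Ew=0$, analyzing the chain $w_k=F^kw$ via the identities $Hw_k=(\mu-2k)w_k$ and $Ew_k=k(\mu-k+1)w_{k-1}$, using irreducibility to conclude the chain spans $V$, and extracting $\mu=n$ from $Ew_{m+1}=0$—is exactly the standard classification proof that the cited reference contains. The rescaling $v_i=\frac{1}{i!}w_i$ correctly matches the action to the basis formulas of Lemma \ref{lem:Vn_U}, so nothing is missing.
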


\begin{proof}
See \cite[Section V.4]{kassel} for example.
\end{proof}

It is well known that the universal enveloping algebra of a Lie algebra is a Hopf algebra. For example, see \cite[Section 5]{Hopf65}.

\begin{Lemma}\label{lem:Hopf_U}
The algebra $\U$ is a Hopf algebra on which the counit $\e\colon \U\to \C$, the antipode $S\colon \U\to \U$ and the comultiplication $\Delta\colon \U\to \U\otimes \U$ are given by
\begin{alignat*}{4}
&\e(E)=0,\qquad&&
\e(F) =0,\qquad&&
\e(H)=0,&
\\
&S(E)=-E,\qquad&&
S(F) =-F,\qquad&&
S(H)=-H,&
\\
&\Delta(E)=E\otimes 1+1\otimes E,
\qquad&&
\Delta(F)=F\otimes 1+1\otimes F,
\qquad&&
\Delta(H)=H\otimes 1+1\otimes H.&
\end{alignat*}
\end{Lemma}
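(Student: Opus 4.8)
The plan is to exploit the presentation of $\U$ by generators and relations in Definition~\ref{defn:U}: to define an algebra homomorphism (resp.\ anti-homomorphism) out of $\U$ it suffices to assign images to $E$, $F$, $H$ and verify that these images satisfy (resp.\ satisfy with every product reversed) the three defining relations. So the first task is to check that $\e$ and $\Delta$ extend to algebra homomorphisms and that $S$ extends to an algebra anti-homomorphism.

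For $\e$ this is immediate, since all generators go to $0\in\C$ and $\C$ is commutative. For $\Delta$ one works in $\U\otimes\U$, where $x\otimes1$ commutes with $1\otimes y$; writing $\Delta(H)=H\otimes1+1\otimes H$ and similarly for $E$, $F$, the commutator $[\Delta(H),\Delta(E)]$ splits into $[H\otimes1,E\otimes1]+[1\otimes H,1\otimes E]=2E\otimes1+2(1\otimes E)=2\Delta(E)$ by applying $[H,E]=2E$ in each leg, and $[\Delta(H),\Delta(F)]=-2\Delta(F)$, $[\Delta(E),\Delta(F)]=\Delta(H)$ follow the same way. For $S$, being an anti-homomorphism, one needs $-E$, $-F$, $-H$ to satisfy the defining relations with each product reversed; for instance $[H,E]=2E$ becomes $(-E)(-H)-(-H)(-E)=-2E$, i.e.\ $-[H,E]=-2E$, which holds, and the remaining two are identical.

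Next one verifies the bialgebra axioms. Coassociativity $(\Delta\otimes\mathrm{id})\circ\Delta=(\mathrm{id}\otimes\Delta)\circ\Delta$ and the counit identities $(\e\otimes\mathrm{id})\circ\Delta=\mathrm{id}=(\mathrm{id}\otimes\e)\circ\Delta$ are equalities of algebra homomorphisms on $\U$, so it is enough to check them on $E$, $F$, $H$; since each of these is primitive for $\Delta$, both sides of coassociativity send a primitive $X$ to $X\otimes1\otimes1+1\otimes X\otimes1+1\otimes1\otimes X$, and $(\e\otimes\mathrm{id})\Delta(X)=\e(X)1+X=X$, as needed.

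The one step that genuinely needs care — the main obstacle — is the antipode axiom $m\circ(S\otimes\mathrm{id})\circ\Delta=\eta\circ\e=m\circ(\mathrm{id}\otimes S)\circ\Delta$, because $m\circ(S\otimes\mathrm{id})\circ\Delta$ is not an algebra homomorphism (here $S$ is only an anti-homomorphism), so checking it on generators is not obviously sufficient. The remedy is the standard observation that, since $S$ is an algebra anti-homomorphism, the set of $a\in\U$ on which both antipode identities hold is a subalgebra of $\U$ containing $1$; hence it suffices that $E$, $F$, $H$ lie in it, and for a primitive element $X$ one computes $m(S\otimes\mathrm{id})\Delta(X)=S(X)+X=-X+X=0=\e(X)1$ and likewise $m(\mathrm{id}\otimes S)\Delta(X)=0$, so $E$, $F$, $H$ qualify. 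This finishes the verification; alternatively the whole statement is the special case, recorded in \cite[Section~5]{Hopf65}, of the general fact that the universal enveloping algebra of a Lie algebra is a cocommutative Hopf algebra.
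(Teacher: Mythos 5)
Your proof is correct. Note, though, that the paper does not actually prove Lemma~\ref{lem:Hopf_U} by hand: it simply invokes the classical fact that the universal enveloping algebra of any Lie algebra is a Hopf algebra, citing \cite[Section~5]{Hopf65} (and this is also the fallback you mention at the end). What you have written is instead a self-contained verification, and it is essentially the same verification the paper does carry out later, but for $\K$ rather than $\U$, in the proof of Lemma~\ref{lem:Hopf_K}: there too the homomorphism/antihomomorphism extensions and the axioms (H1), (H2) are checked on generators, and the antipode axiom (H3) is handled exactly by your key observation, namely that the set of elements on which $m\circ(S\otimes 1)\circ\Delta$ and $m\circ(1\otimes S)\circ\Delta$ agree with $\iota\circ\e$ is closed under products because $(\iota\circ\e)(y)$ is a scalar multiple of $1$, so it suffices to check the primitive generators. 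You correctly identify this as the one point where checking on generators is not automatic (since $m\circ(S\otimes 1)\circ\Delta$ is not an algebra homomorphism), and your anti-homomorphism check of the reversed relations for $S$ is also right. In short: the paper's route is a citation to the general theory, which is shorter; your route is an explicit verification, which is more elementary and mirrors the argument the paper chose to spell out only in the $\K$ case.
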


Every $\U\otimes \U$-module can be viewed as a $\U$-module via the comultiplication of~$\U$.
The Clebsch--Gordan rule for $\U$ is as follows:

\begin{Theorem}\label{thm:CGrule_U}
For any integers $m,n\geq 0$, the $\U$-module $L_m\otimes L_n$ is isomorphic to
\[
\bigoplus_{p=0}^{\min\{m,n\}}L_{m+n-2p}.
\]
\end{Theorem}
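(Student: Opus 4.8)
The plan is to prove the Clebsch--Gordan rule by an explicit construction of highest weight vectors inside $L_m\otimes L_n$, combined with the classification of finite-dimensional irreducible $\U$-modules from Lemma~\ref{lem:classification_Umodule}. Write $\{v_i\}_{i=0}^m$ for the basis of $L_m$ and $\{w_j\}_{j=0}^n$ for the basis of $L_n$ furnished by Lemma~\ref{lem:Vn_U}, and recall that the $\U$-action on $L_m\otimes L_n$ is given through $\Delta$ from Lemma~\ref{lem:Hopf_U}, so that $E$ acts as $E\otimes 1+1\otimes E$, and similarly for $F$ and $H$. First I would note that $H$ acts diagonally on the tensor product basis $\{v_i\otimes w_j\}$ with eigenvalue $(m-2i)+(n-2j)=m+n-2(i+j)$, so the weight spaces are the spans of $\{v_i\otimes w_j : i+j=k\}$ for $k=0,1,\dots,m+n$. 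Counting dimensions, the multiplicity of the weight $m+n-2k$ equals the number of pairs $(i,j)$ with $0\le i\le m$, $0\le j\le n$, $i+j=k$, which is $\min\{k,m,n,m+n-k\}+1$.

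Next I would locate the highest weight vectors. For each $p$ with $0\le p\le\min\{m,n\}$, I want a nonzero vector $u_p$ in the weight space for eigenvalue $m+n-2p$ that is annihilated by $E=E\otimes 1+1\otimes E$; by Lemma~\ref{lem:classification_Umodule}, such a vector generates a submodule isomorphic to $L_{m+n-2p}$. The natural ansatz is
\[
u_p=\sum_{i=0}^{p}c_i\, v_i\otimes w_{p-i}
\]
for suitable scalars $c_i$; applying $E$ and using the formulas of Lemma~\ref{lem:Vn_U} gives a two-term recurrence relating $c_i$ and $c_{i-1}$, whose solution is (up to scaling) $c_i=(-1)^i\binom{p}{i}\frac{(m-i)!\,(n-p+i)!}{(m-p)!\,n!}$ or some equivalent closed form; the precise constants are a routine but slightly fiddly computation. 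The key structural point is simply that the recurrence has a one-dimensional solution space, so up to scalar there is exactly one highest weight vector of that weight, confirming the multiplicity-one feature in the statement.

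Having produced highest weight vectors $u_0,u_1,\dots,u_{\min\{m,n\}}$ of weights $m+n, m+n-2,\dots,|m-n|$, I would let $M_p\subseteq L_m\otimes L_n$ denote the $\U$-submodule generated by $u_p$; by Lemma~\ref{lem:classification_Umodule} we have $M_p\cong L_{m+n-2p}$. To finish, I would show the sum $\sum_{p=0}^{\min\{m,n\}}M_p$ is direct and exhausts $L_m\otimes L_n$. One clean way: the vectors $u_p$ lie in distinct weight spaces, so they are linearly independent; more generally, applying $F^t$ to them produces vectors in various weight spaces, and a weight-space-by-weight-space count shows that $\sum_p \dim M_p=\sum_{p=0}^{\min\{m,n\}}(m+n-2p+1)=(m+1)(n+1)=\dim(L_m\otimes L_n)$, while the sum of the $M_p$ spans each weight space (since in weight $m+n-2k$ the images $F^{k-p}u_p$ for the relevant $p$ are triangular with respect to the basis $\{v_i\otimes w_{k-i}\}$ and hence a basis of that weight space). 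Linear independence of these images across all $p$ then forces $\sum_p M_p=\bigoplus_p M_p=L_m\otimes L_n$, which is the assertion.

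The main obstacle I anticipate is the bookkeeping in the last step: verifying that the vectors $\{F^{k-p}u_p\}$ with $\max\{0,k-n\}\le p\le\min\{k,m,n\}$ actually form a basis of the weight-$(m+n-2k)$ subspace, equivalently that the relevant change-of-basis matrix is invertible. This is where one must be careful that no highest weight vector of a given weight "dies" prematurely under repeated application of $F$ — but since each $M_p\cong L_{m+n-2p}$ is irreducible of the right dimension, $F^t u_p\ne 0$ for all $t\le m+n-2p$, and the triangular structure of the $c_i$ with respect to the index $i$ guarantees the needed invertibility. Everything else is a direct application of Lemmas~\ref{lem:Vn_U}, \ref{lem:classification_Umodule}, and \ref{lem:Hopf_U}.
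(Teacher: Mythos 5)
The paper does not actually prove Theorem~\ref{thm:CGrule_U}: it defers to Kassel (Section V.5). Your proposal is the standard highest-weight proof, essentially the textbook argument the paper cites, and its architecture is sound: the weight decomposition of $L_m\otimes L_n$ under $\Delta(H)$, the construction for each $0\le p\le\min\{m,n\}$ of a vector $u_p=\sum_i c_i\,v_i\otimes w_{p-i}$ of weight $m+n-2p$ killed by $\Delta(E)$ via the two-term recurrence $c_{i+1}(m-i)+c_i(n-p+i+1)=0$ (one-dimensional solution space, all $c_i\ne 0$), and the dimension count $\sum_{p}(m+n-2p+1)=(m+1)(n+1)$. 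Two small repairs: your displayed closed form for $c_i$ does not satisfy this recurrence (it carries a spurious binomial factor), though this is harmless since only uniqueness up to scalar is used; and the assertion that $u_p$ generates a copy of $L_{m+n-2p}$ does not follow from Lemma~\ref{lem:classification_Umodule} alone --- you need the standard computation $E F^t u_p=t(\lambda-t+1)F^{t-1}u_p$ with $\lambda=m+n-2p$, together with finite-dimensionality, to see that the cyclic submodule $M_p$ has dimension $\lambda+1$ and is irreducible.

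The genuine weak point is your justification of the final step. The vectors $F^{k-p}u_p$ are \emph{not} triangular with respect to the basis $\{v_i\otimes w_{k-i}\}$ of the weight space: since every $c_i\ne 0$ and $F\otimes 1+1\otimes F$ spreads the indices, the expansion of $F^{k-p}u_p$ generically involves every basis vector of that weight space, so there is no triangular change-of-basis matrix whose invertibility you can read off, and the claimed spanning argument does not go through as stated. The conclusion is nevertheless true and easily repaired: the submodules $M_p\cong L_{m+n-2p}$ are irreducible with pairwise distinct highest weights, hence pairwise non-isomorphic, so the sum $\sum_p M_p$ is direct (a sum of irreducible submodules is semisimple, and any simple submodule of it is isomorphic to one of the summands); your dimension count then forces $\bigoplus_p M_p=L_m\otimes L_n$. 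Alternatively, prove linear independence of $\{F^{k-p}u_p\}_p$ within each weight space by induction on $p$, applying $E^{k-p}$ and using $E^{k-p}F^{k-p}u_p=(\text{nonzero scalar})\,u_p$. With either fix your proof is complete.
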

\begin{proof}
See \cite[Section V.5]{kassel} for example.
\end{proof}

For the rest of this paper, let $\omega$ denote a scalar taken from $\C$.

\begin{Definition}\label{defn:K}
The {\it Krawtchouk algebra} $\K$ is an algebra over $\C$ generated by $A$ and $B$ subject to the relations
\begin{gather}
A^2 B-2 ABA +B A^2
=
B+\omega A,
\label{R1}
\\
B^2A-2 BAB+AB^2
=
A+\omega B.
\label{R2}
\end{gather}
\end{Definition}
The algebra~$\K$ is the case of the Askey--Wilson algebra corresponding to the Krawtchouk polynomials \cite[Lemma 7.2]{Vidunas:2007}.
Define $C$ to be the following element of $\K$:
\begin{gather*}
C=[A,B].
\end{gather*}

\begin{Lemma}\label{lem:presentation_K}
The algebra~$\K$ has a presentation with the generators $A$, $B$, $C$ and the relations
\begin{align}
[A,B] &=C,
\label{R1_v2}
\\
[A,C]&=B+\omega A,
\label{R2_v2}
\\
[C,B]&=A+\omega B.
\label{R3_v2}
\end{align}
\end{Lemma}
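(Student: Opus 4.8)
The plan is to exhibit mutually inverse algebra homomorphisms between $\K$ and the algebra $\K'$ presented by the generators $A$, $B$, $C$ and the relations \eqref{R1_v2}--\eqref{R3_v2}. Everything rests on the elementary commutator identity
\[
X^2 Y - 2 X Y X + Y X^2 = \bigl[X,[X,Y]\bigr],
\]
valid in any algebra, which one verifies by expanding the right-hand side. Applying it with $(X,Y)=(A,B)$ shows that the left-hand side of \eqref{R1} equals $[A,[A,B]]$, and applying it with $(X,Y)=(B,A)$ shows that the left-hand side of \eqref{R2} equals $[B,[B,A]]=[B,-C]=[C,B]$. Thus, once we set $C=[A,B]$, relation \eqref{R1} reads $[A,C]=B+\omega A$ and relation \eqref{R2} reads $[C,B]=A+\omega B$.

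First I would construct a homomorphism $\psi\colon \K'\to\K$ determined by $A\mapsto A$, $B\mapsto B$, $C\mapsto [A,B]$. To see it is well defined I need the images to satisfy \eqref{R1_v2}--\eqref{R3_v2} in $\K$: relation \eqref{R1_v2} holds by the definition of $C$, while \eqref{R2_v2} and \eqref{R3_v2} are precisely \eqref{R1} and \eqref{R2} rewritten as above, hence hold in $\K$. Conversely I would construct a homomorphism $\varphi\colon \K\to\K'$ determined by $A\mapsto A$, $B\mapsto B$; this is well defined because in $\K'$ the identity gives $A^2 B-2ABA+BA^2=[A,[A,B]]=[A,C]=B+\omega A$ using \eqref{R1_v2} and \eqref{R2_v2}, and similarly $B^2A-2BAB+AB^2=[B,[B,A]]=[C,B]=A+\omega B$ using \eqref{R1_v2} and \eqref{R3_v2}, so the defining relations \eqref{R1} and \eqref{R2} of $\K$ are respected.

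Finally I would check that $\varphi$ and $\psi$ are mutually inverse. The composite $\psi\circ\varphi$ fixes $A$ and $B$, which generate $\K$, so it is the identity on $\K$. The composite $\varphi\circ\psi$ fixes $A$ and $B$ and sends $C$ to $\varphi([A,B])=[A,B]=C$, where the last equality is \eqref{R1_v2} holding in $\K'$; since $A$, $B$, $C$ generate $\K'$, this composite is the identity on $\K'$. Hence $\varphi$ is an isomorphism, which is exactly the assertion that $\K$ admits the stated presentation. I do not expect a genuine obstacle here beyond bookkeeping; the one point that deserves care is the sign in the second relation, where $[B,[B,A]]$ must be matched with $[C,B]$ and not with $[B,C]$.
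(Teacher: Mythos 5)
Your proof is correct, and it is essentially the paper's argument: the paper also proves the lemma by observing that, with $C=[A,B]$, the relations \eqref{R1} and \eqref{R2} rewrite (via the identity $X^2Y-2XYX+YX^2=[X,[X,Y]]$, with the sign handled exactly as you note) into \eqref{R2_v2} and \eqref{R3_v2}. Your version merely makes the implicit change-of-presentation rigorous by exhibiting the mutually inverse homomorphisms explicitly, which is sound bookkeeping rather than a different route.
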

\begin{proof}
The relation \eqref{R1_v2} is immediate from the setting of $C$. Using \eqref{R1_v2}, the relations \eqref{R1} and \eqref{R2} can be written as \eqref{R2_v2} and \eqref{R3_v2}, respectively. The lemma follows.
\end{proof}

Let $\mathcal K_\omega$ denote a three-dimensional Lie algebra over $\C$ with a basis $a$, $b$, $c$ satisfying
\begin{gather*}
[a,b]=c,
\qquad
[a,c]=b+\omega a,
\qquad
[c,b]=a+\omega b.
\end{gather*}
By Lemma~\ref{lem:presentation_K}, the algebra~$\K$ is the universal enveloping algebra of $\mathcal K_\omega$. There is a connection between $\K$ and $\U$:

\begin{Theorem}\label{thm:KtoU}
There exists a unique algebra homomorphism $\zeta\colon \K\to \U$ that sends
\begin{gather*}
A\mapsto \frac{1+\omega}{2} E+\frac{1-\omega}{2} F-\frac{\omega}{2}H,
\qquad
B\mapsto \frac{1}{2} H,
\qquad
C\mapsto -\frac{1+\omega}{2} E+\frac{1-\omega}{2} F.
\end{gather*}
Moreover, if $\omega^2\not=1$, then $\zeta$ is an isomorphism and its inverse sends
\begin{gather*}
E\mapsto \frac{1}{1+\omega}A+\frac{\omega}{1+\omega} B-\frac{1}{1+\omega}C,
\qquad
F\mapsto \frac{1}{1-\omega}A+\frac{\omega}{1-\omega} B+\frac{1}{1-\omega}C,
\qquad
H\mapsto 2 B.
\end{gather*}
\end{Theorem}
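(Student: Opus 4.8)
The plan is to exploit that both $\K$ and $\U$ are defined by generators and relations, so that a homomorphism out of either algebra is determined by the images of the generators and exists as soon as those images satisfy the defining relations.

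\emph{Constructing $\zeta$.} Uniqueness is immediate: $\K$ is generated by $A$ and $B$, so a homomorphism out of $\K$ is pinned down by the images of $A$ and $B$, the image of $C=[A,B]$ being then forced. For existence, let $\widetilde A,\widetilde B,\widetilde C\in\U$ denote the proposed images of $A,B,C$. Since $\widetilde A,\widetilde B,\widetilde C$ all lie in the span of $E,F,H$, which is the Lie subalgebra $\sl$ of $\U$, it suffices to check that the assignment $a\mapsto\widetilde A$, $b\mapsto\widetilde B$, $c\mapsto\widetilde C$ respects the bracket relations of $\mathcal K_\omega$; equivalently, by Lemma~\ref{lem:presentation_K}, that $\widetilde A,\widetilde B,\widetilde C$ satisfy \eqref{R1_v2}--\eqref{R3_v2}. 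This is a direct computation from $[E,H]=-2E$, $[F,H]=2F$, $[E,F]=H$: for instance $[\widetilde A,\widetilde B]=\tfrac12[\widetilde A,H]=-\tfrac{1+\omega}{2}E+\tfrac{1-\omega}{2}F=\widetilde C$, and the remaining two relations drop out after collecting the coefficients of $E$, $F$, $H$ (the $H$-coefficient of $\widetilde A+\omega\widetilde B$ vanishing because it equals $-\tfrac{\omega}{2}+\tfrac{\omega}{2}$). This yields the homomorphism $\zeta$.

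\emph{The inverse when $\omega^2\neq1$.} Here I would build a candidate inverse $\eta\colon\U\to\K$ out of the presentation in Definition~\ref{defn:U}, sending $E,F,H$ to $\widetilde E=\tfrac{1}{1+\omega}(A+\omega B-C)$, $\widetilde F=\tfrac{1}{1-\omega}(A+\omega B+C)$, $\widetilde H=2B$ (the scalars $1\pm\omega$ being invertible precisely because $\omega^2\neq1$), and verify $[\widetilde H,\widetilde E]=2\widetilde E$, $[\widetilde H,\widetilde F]=-2\widetilde F$, $[\widetilde E,\widetilde F]=\widetilde H$ using \eqref{R1_v2}--\eqref{R3_v2}. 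The first two are short. The third is the key point: setting $X=A+\omega B$ one has $[\widetilde E,\widetilde F]=\tfrac{1}{1-\omega^2}[X-C,X+C]=\tfrac{2}{1-\omega^2}[X,C]$, while $[X,C]=[A,C]+\omega[B,C]=(B+\omega A)-\omega(A+\omega B)=(1-\omega^2)B$, so the denominator cancels and $[\widetilde E,\widetilde F]=2B=\widetilde H$. Thus $\eta$ is a well-defined algebra homomorphism.

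\emph{They are mutually inverse.} Since $\zeta$ and $\eta$ are algebra homomorphisms, it suffices to see that $\zeta\circ\eta$ and $\eta\circ\zeta$ fix the respective sets of generators, which is a handful of linear computations; e.g.\ $\zeta(\eta(E))=\tfrac{1}{1+\omega}\bigl(\zeta(A)+\omega\zeta(B)-\zeta(C)\bigr)=\tfrac{1}{1+\omega}(1+\omega)E=E$ once the $F$- and $H$-terms cancel, and $\eta(\zeta(A))=\tfrac12(A+\omega B-C)+\tfrac12(A+\omega B+C)-\omega B=A$, and similarly for $F,H,B,C$. I expect the only step requiring genuine care to be the identity $[\widetilde E,\widetilde F]=\widetilde H$, where all three relations of $\K$ conspire to remove the $1-\omega^2$ denominator; everything else is bookkeeping with the coefficients of $E$, $F$, $H$.
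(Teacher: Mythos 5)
Your proof is correct, and your computations check out: the verification $[\widetilde A,\widetilde B]=\widetilde C$, the identity $[X,C]=(1-\omega^2)B$ with $X=A+\omega B$ (which kills the denominator in $[\widetilde E,\widetilde F]=\widetilde H$), and the generator checks for $\zeta\circ\eta$ and $\eta\circ\zeta$ are all accurate, and checking both compositions on generators does suffice since both maps are algebra homomorphisms. Your route is, however, not the one the paper writes out. The paper dismisses the direct generator-and-relation verification in a single sentence as routine and instead proves the theorem by citing Nomura--Terwilliger: it takes the Lie algebra homomorphism $\phi\colon \mathcal K_\omega\to\mathfrak{sl}_2(\C)$ from their Lemma~2.12, obtains $\zeta$ by applying the universal property of $\K$ (as the enveloping algebra of $\mathcal K_\omega$) to $\sigma\circ\phi$, and, when $\omega^2\neq 1$, gets the inverse for free from the fact that $\phi$ is a Lie algebra isomorphism (their Lemma~2.13) together with the universal property of $\U$ applied to $\tau\circ\phi^{-1}$. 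The trade-off is clear: the paper's argument works entirely at the level of the three-dimensional Lie algebras, so the isomorphism statement reduces to an external finite-dimensional fact and no composition checks are needed at the algebra level, but it leans on the citation; your argument is self-contained and makes the inverse explicit, at the cost of verifying the $\mathfrak{sl}_2$ relations \eqref{R1_v2}--\eqref{R3_v2} inside $\U$ and $\K$ by hand and then confirming that $\eta$ and $\zeta$ invert each other on generators. One cosmetic remark: in constructing $\zeta$ you do not actually need that the span of $E$, $F$, $H$ is a copy of $\mathfrak{sl}_2(\C)$ inside $\U$ (which implicitly uses Poincar\'e--Birkhoff--Witt); your alternative phrasing, checking \eqref{R1_v2}--\eqref{R3_v2} for $\widetilde A$, $\widetilde B$, $\widetilde C$ directly via Lemma~\ref{lem:presentation_K}, is the cleaner justification and is all that is required.
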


\begin{proof}
It is routine to verify the result by using Definition \ref{defn:U} and Lemma~\ref{lem:presentation_K}. Here we provide another proof by applying \cite[Lemmas~2.12 and~2.13]{K&sl2}.

Let $\sigma\colon \mathfrak{sl}_2(\C)\to \U$ denote the canonical Lie algebra homomorphism that sends $e$, $f$, $h$ to $E$, $F$, $H$, respectively.
 Let $\tau\colon \mathcal K_\omega\to \K$ denote the canonical Lie algebra homomorphism that sends $a$, $b$, $c$ to $A$, $B$, $C$, respectively.
By \cite[Lemma 2.12]{K&sl2}, there exists a unique Lie algebra homomorphism $\phi\colon \mathcal K_\omega \to \mathfrak{sl}_2(\C)$ that sends
\begin{gather*}
a \mapsto \frac{1+\omega}{2} e+\frac{1-\omega}{2} f-\frac{\omega}{2} h,
\qquad
b \mapsto \frac{1}{2} h,
\qquad
c \mapsto -\frac{1+\omega}{2} e+\frac{1-\omega}{2} f.
\end{gather*}
Applying the universal property of $\K$ to the Lie algebra homomorphism $
\sigma\circ \phi$, this gives the algebra homomorphism $\zeta$. Suppose that $\omega^2\not=1$. Then $\phi\colon \mathcal K_\omega \to \mathfrak{sl}_2(\C)$ is a Lie algebra isomorphism by \cite[Lemma 2.13]{K&sl2}.
Applying the universal property of $\U$ to the Lie algebra homomorphism $\tau\circ \phi^{-1}$, this gives the inverse of $\zeta$.
\end{proof}

In this paper, we relate the above algebraic results to the Hamming graphs. We now recall the definition of Hamming graphs.
Let $X$ denote a $q$-element set and let $D$ be a positive integer.
The {\it $D$-dimensional Hamming graph $H(D)=H(D,q)$ over $X$} is a simple graph whose vertex set is $X^D$ and $x,y\in X^D$ are adjacent if and only if $x$, $y$ differ in exactly one coordinate. Let~$\partial$ denote the path-length distance function for $H(D)$.
Let $\Mat_{X^D}(\C)$ stand for the algebra consisting of the square matrices over $\C$ indexed by $X^D$.

The adjacency matrix $\A(D)\in \Mat_{X^D}(\C)$ of $H(D)$ is the $0$-$1$ matrix such that
\[
\A(D)_{xy}=1\qquad
\text{if and only if}\qquad
\partial(x,y)=1
\]
for all $x,y\in X^D$.
Fix a vertex $x\in X^D$. The dual adjacency matrix $\A^*(D)\in \Mat_{X^D}(\C)$ of $H(D)$ with respect to $x$ is a diagonal matrix given by
\[
\A^*(D)_{yy}=D(q-1)-q\cdot \partial(x,y)
\]
for all $y\in X^D$.
The Terwilliger algebra $\T(D)$ of $H(D)$ with respect to $x$ is the subalgebra of $\Mat_{X^D}(\C)$ generated by $\A(D)$ and $\A^*(D)$ \cite{TerAlgebraI,TerAlgebraII,TerAlgebraIII}.
Let $V(D)$ denote the vector space consisting of all column vectors over $\C$ indexed by $X^D$. The vector space $V(D)$ has a natural $\T(D)$-module structure and it is called the standard $\T(D)$-module.

In \cite{TerAlgebraIII}, Terwilliger employed the endpoints, dual endpoints, diameters and auxiliary parameters to describe the irreducible modules for
the known families of thin $Q$-polynomial distance-regular graphs with unbounded diameter.
In \cite{Doob1997}, Tanabe gave a recursive construction of irreducible modules for the Doob graphs and his method can be adjusted to the case of $H(D)$. In \cite{hypercube2002}, Go gave a decomposition of the standard module for the hypercube. In \cite{semidefinite2006}, Gijswijt, Schrijver and Tanaka described a decomposition of $V(D)$ in terms of the block-diagonalization of $\T(D)$. In \cite{Hamming2006}, Levstein, Maldonado and Penazzi applied the representation theory of ${\rm GL}_2(\C)$ to determine the structure of $\T(D)$. In \cite{Manila}, it was shown that $V(D)$ can be viewed as a $\mathfrak{gl}_2(\C)$-module as well as a $\mathfrak{sl}_2(\C)$-module. In \cite{Hamming:2021}, Bernard, Cramp\'{e}, and Vinet found a decomposition of $V(D)$ by generalizing the result on the hypercube.

In this paper, we view $V(D)$ as a $\mathfrak{K}_{1-\frac{2}{q}}$-module as well as a $\U$-module in light of Theorem~\ref{thm:KtoU}. Subsequently, we apply Theorem~\ref{thm:CGrule_U} to prove the following results:

\begin{Proposition}\label{prop:Kpk}
Let $D$ be a positive integer.
For any integers $p$ and $k$ with $0\leq p\leq D$ and $0\leq k\leq \big\lfloor \frac{p}{2}\big\rfloor$, there exists a $(p-2k+1)$-dimensional irreducible $\T(D)$-module $L_{p,k}(D)$ satisfying the following conditions:
\begin{enumerate}\itemsep=0pt
\item[$(i)$] There exists a basis for $L_{p,k}(D)$ with respect to which the matrices representing $\A(D)$ and~$\A^*(D)$ are
\begin{gather*}
\begin{pmatrix}
\alpha_0 &\gamma_1 & & &{\bf 0}
\\
\beta_0 &\alpha_1 &\gamma_2
\\
&\beta_1 &\alpha_2 &\ddots
 \\
& &\ddots &\ddots &\gamma_{p-2k}
 \\
{\bf 0} & & &\beta_{p-2k-1} &\alpha_{p-2k}
\end{pmatrix}\!,
\qquad
\begin{pmatrix}
\theta_0 & & & &{\bf 0}
\\
 &\theta_1
\\
 & &\theta_2
 \\
 & & &\ddots
 \\
{\bf 0} & & & &\theta_{p-2k}
\end{pmatrix}\!,
\end{gather*}
respectively.

\item[$(ii)$] There exists a basis for $L_{p,k}(D)$ with respect to which the matrices representing $\A(D)$ and~$\A^*(D)$ are
\begin{gather*}
\begin{pmatrix}
\theta_0 & & & &{\bf 0}
\\
 &\theta_1
\\
 & &\theta_2
 \\
 & & &\ddots
 \\
{\bf 0} & & & &\theta_{p-2k}
\end{pmatrix}\!,
\qquad
\begin{pmatrix}
\alpha_0 &\gamma_1 & & &{\bf 0}
\\
\beta_0 &\alpha_1 &\gamma_2
\\
&\beta_1 &\alpha_2 &\ddots
 \\
& &\ddots &\ddots &\gamma_{p-2k}
 \\
{\bf 0} & & &\beta_{p-2k-1} &\alpha_{p-2k}
\end{pmatrix}\!,
\end{gather*}
respectively.
\end{enumerate}
Here the parameters $\{\alpha_i\}_{i=0}^{p-2k}$,
$\{\beta_i\}_{i=0}^{p-2k-1}$, $\{\gamma_i\}_{i=1}^{p-2k}$, $\{\theta_i\}_{i=0}^{p-2k}$ are as follows:
\begin{alignat*}{3}
&\alpha_i=(q-2)(i+k)+p-D\qquad
&&\text{for}\ i=0,1,\dots,p-2k,&
\\
&\beta_i=i+1\qquad
&&\text{for}\ i=0,1,\dots,p-2k-1,&
\\
&\gamma_i=(q-1)(p-i-2k+1)\qquad
&&\text{for}\ i=1,2,\dots,p-2k,&
\\
&\theta_i=q(p-i-k)-D\qquad
&&\text{for} \ i=0,1,\dots,p-2k.&
\end{alignat*}
\end{Proposition}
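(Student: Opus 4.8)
The plan is to realize $V(D)$ as the $D$-fold tensor power of the natural $2$-dimensional representation of $\U$ (equivalently, of $\K$ with $\omega=1-\frac{2}{q}$), and then apply the Clebsch--Gordan rule iteratively to read off the list of irreducible summands, matching each summand against an explicitly constructed standard-basis module. First I would treat the base case $D=1$: here $V(1)\cong\C^q$, and one checks directly that $\A(1)$ is $q$ times a rank-one projection minus the identity (the all-ones matrix shifted), while $\A^*(1)$ is diagonal with a single distinguished entry. From this one identifies $V(1)$ as a $\K_\omega$-module with $\omega=1-\frac{2}{q}$, and decomposes it as $L_{1,0}(1)\oplus L_{0,0}(1)^{\,q-1}$ — a $2$-dimensional irreducible piece spanned by the distinguished vertex and the ``balanced'' direction, plus $q-1$ copies of the trivial module. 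The key point is that $\A(D)=\sum_{j=1}^D \A_j$ and $\A^*(D)=\sum_{j=1}^D\A^*_j$ act as the coproduct images, i.e.\ $V(D)\cong V(1)^{\otimes D}$ as modules over $\K_\omega$ (via $\zeta$ and the Hopf structure of $\U$ in Lemma~\ref{lem:Hopf_U}), so that Theorem~\ref{thm:CGrule_U} applies.

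Next I would carry out the induction on $D$. Assuming $V(D-1)\cong\bigoplus_{p,k} L_{p,k}(D-1)^{\,\oplus\, m_{p,k}(D-1)}$ with the stated multiplicities, I tensor with $V(1)\cong L_1\oplus L_0^{\,q-1}$ (in the $\U$-picture, $L_n$ being the $(n+1)$-dimensional irreducible of Lemma~\ref{lem:Vn_U}), and apply Theorem~\ref{thm:CGrule_U}: $L_a\otimes L_1\cong L_{a+1}\oplus L_{a-1}$ for $a\geq 1$ and $L_0\otimes L_1\cong L_1$, while tensoring with the $L_0$ summands just multiplies by $q-1$. Translating back through the dictionary $\dim L_{p,k}(D)=p-2k+1$ and tracking how the pair $(p,k)$ evolves (the ``$+1$'' branch sends $(p,k)\mapsto(p+1,k)$, the ``$-1$'' branch sends $(p,k)\mapsto(p+1,k+1)$, and the $q-1$ copies of $L_0$ contribute $(p,k)\mapsto(p+2,k+1)$ up to reindexing), one obtains the recursion for the multiplicities and confirms the ranges $0\le p\le D$, $0\le k\le\lfloor p/2\rfloor$.

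The substantive part is not the bookkeeping but pinning down the \emph{matrix entries}: I must show the abstract irreducible $\T(D)$-module isomorphic to $L_{p-2k}$ (as a $\K_\omega$-module) is the concrete tridiagonal--diagonal pair with the stated $\alpha_i,\beta_i,\gamma_i,\theta_i$. For this I would start from the action of $E,F,H$ on $L_{n}$ in Lemma~\ref{lem:Vn_U} with $n=p-2k$, substitute $A=\frac{1+\omega}{2}E+\frac{1-\omega}{2}F-\frac{\omega}{2}H$ and $B=\frac12 H$ from Theorem~\ref{thm:KtoU} with $\omega=1-\frac2q$, and then pass from $(A,B)$ back to $(\A(D),\A^*(D))$. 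The translation involves an affine change $\A(D)=c_1 A+c_0 I$, $\A^*(D)=c_1' B+c_0' I$ whose coefficients $c_1,c_0,c_1',c_0'$ depend on $p,k,D,q$; these are forced by matching the known eigenvalue data of $H(D)$ — the $\A^*(D)$-eigenvalue $\theta_i=q(p-i-k)-D$ and the ``endpoint'' shift built into $\alpha_i$. I expect the main obstacle to be getting this affine normalization exactly right and verifying that, with it, the raising/lowering coefficients of $E$ and $F$ combine to give precisely $\beta_i=i+1$ and $\gamma_i=(q-1)(p-i-2k+1)$; once the normalization is fixed, part $(i)$ is a direct substitution, and part $(ii)$ follows because $\K_\omega$ has an automorphism (or antiautomorphism) swapping the roles of $A$ and $B$ — concretely, the ``duality'' of the Askey--Wilson / Leonard pair structure — which interchanges the two bases while preserving the parameter arrays. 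I would close by noting irreducibility of each $L_{p,k}(D)$ is inherited from irreducibility of $L_{p-2k}$ over $\U$ together with the fact that $\zeta$ is an isomorphism for $\omega^2\neq1$, i.e.\ for $q\ne 1$, which holds since $q\ge 3$.
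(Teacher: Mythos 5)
Your overall route is the same as the paper's (tensor--power realization of $V(D)$, the Hopf structure, the Clebsch--Gordan rule, an affine dictionary back to $\A(D),\A^*(D)$, and the $A\leftrightarrow B$ swap for part (ii)), but the step that actually produces the entries $\alpha_i,\beta_i,\gamma_i,\theta_i$ has a genuine gap. You posit $\A(D)=c_1A+c_0I$, $\A^*(D)=c_1'B+c_0'I$ on each irreducible summand and propose to determine the constants ``by matching the known eigenvalue data $\theta_i=q(p-i-k)-D$'' --- but those eigenvalues are precisely what the proposition asserts, so fitting the normalization to them is circular. Worse, no single affine dictionary exists globally: with the decomposition you use, $V\cong L_1\oplus(q-2)\cdot L_0$, the generator $B$ must act with the three eigenvalues $\tfrac12,-\tfrac12,0$, while any affine function of $\A^*$ (which has only the two eigenvalues $q-1$ and $-1$) has at most two; equivalently, a direct entrywise check shows no pair $(a\A+cI,\,a'\A^*+c'I)$ with $aa'\neq0$ satisfies the defining relations of $\K$ on all of $V$. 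The paper avoids this by making the dictionary blockwise and \emph{computed} rather than fitted: it splits $V=V_0\oplus V_1$ (Definition~\ref{defn:V0&V1}), uses different constant shifts on the two pieces (Lemma~\ref{lem:r0&r1}), and then proves by induction through the coproduct (Proposition~\ref{prop:rec_rs(D)} and Proposition~\ref{prop:rs(D)A&B}) that on each block $V_s(D)$ with $p=\sum_{i}s_i$ one has $A=\tfrac1q\A(D)|_{V_s(D)}+\tfrac Dq-\tfrac p2$ and likewise for $B$. Only with this shift, depending on the number $p$ of type-$V_1$ tensor factors, do the matrices of Lemmas~\ref{lem:Vn_K} and~\ref{lem2:Vn_K} with $\omega=1-\tfrac2q$ and $n=p-2k$ become the stated tridiagonal/diagonal pairs, and only then is it clear that the chosen copy of $L_{p-2k}$ is $\T(D)$-invariant with the right spectrum. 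Supplying this blockwise relation is the missing substance of the proof.

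Two further slips corrupt exactly the $(p,k)$-dependence your normalization is supposed to capture: $V(1)$ decomposes as $L_1\oplus(q-2)\cdot L_0$, not $L_1\oplus(q-1)\cdot L_0$ (dimension $2+(q-2)=q$); and in your induction the trivial factors do not send $(p,k)\mapsto(p+2,k+1)$ --- tensoring with $L_0$ leaves $(p,k)$ unchanged while $D$ grows by one (each $\A^*$-eigenvalue drops by exactly $1$, consistent with $q(p-i-k)-D$), whereas $(p+2,k+1)$ would shift the eigenvalues by $q$. The parts of your sketch that are sound are the Clebsch--Gordan bookkeeping for the two $L_1$ branches, the use of the swap automorphism of $\K$ for part (ii) (Lemma~\ref{lem:auto_K}), and the inheritance of irreducibility from $L_{p-2k}$ via $\zeta$, since $\omega^2\neq1$ for $q\geq3$.
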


Given a vector space $W$ and a positive integer $p$, we let
\[
p\cdot W=\underbrace{W\oplus W\oplus \cdots \oplus W}_
{\text{{\scriptsize $p$ copies of $W$}}}.
\]

\begin{Theorem}\label{thm:dec_T(D)module}
Let $D$ be a positive integer. Then the standard $\T(D)$-module $V(D)$ is isomorphic to
\begin{gather*}
\bigoplus_{p=0}^D\bigoplus_{k=0}^{\lfloor \frac{p}{2}\rfloor}
\frac{p-2k+1}{p-k+1}{D\choose p}{p\choose k}(q-2)^{D-p}\cdot L_{p,k}(D).
\end{gather*}
\end{Theorem}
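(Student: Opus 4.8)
The plan is to realize $V(D)$ as a $D$-fold tensor power of the standard module for a single complete graph, decompose that single factor, apply the Clebsch--Gordan rule for $\U$, and track one extra central operator that distinguishes the irreducible $\T(D)$-modules sharing a common underlying $\U$-module. To this end, first use that $H(D)$ is the Cartesian product of $D$ copies of $H(1)=K_q$. This identifies $V(D)$ with $V(1)^{\otimes D}$, under which $\A(D)=\sum_{i=1}^D 1\otimes\cdots\otimes\A(1)\otimes\cdots\otimes 1$, and, since $\partial$ on $H(D)$ is additive over the coordinates and $D(q-1)-q\partial$ is the sum of the corresponding quantities on the factors, also $\A^*(D)=\sum_{i=1}^D 1\otimes\cdots\otimes\A^*(1)\otimes\cdots\otimes 1$, with no additive constant. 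Since $\omega=1-\tfrac2q$ satisfies $\omega^2\neq1$ for $q\geq3$, Theorem~\ref{thm:KtoU} makes $\K$ a Hopf algebra in which $A,B,C$ are primitive, so the $\K$-module (equivalently $\U$-module) structure on $V(D)$ is the $D$-fold comultiplication power of the one on $V(1)$.

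Next I would analyse $V(1)$. As a $\U$-module $V(1)\cong L_1\oplus(q-2)L_0$, where the copy of $L_1$ is the $\A^*(1)$-invariant plane spanned by the base-vertex indicator and the sum of its neighbours, and its orthogonal complement is annihilated by $E,F,H$. Let $P$ be the projection of $V(1)$ onto that plane and put $\mathcal N=\sum_{i=1}^D 1\otimes\cdots\otimes P\otimes\cdots\otimes 1$ on $V(D)$. Since each of $E,F,H$ preserves the $L_1/L_0$ splitting of a single factor, $\mathcal N$ commutes with the $\U$-action on $V(D)$; and expressing $\A(1),\A^*(1)$ on $V(1)$ as fixed $\C$-linear combinations of $E,F,H,P$ and the identity shows that $\A(D)$ and $\A^*(D)$ are fixed $\C$-linear combinations of the $\U$-action operators, $\mathcal N$, and the identity. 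Hence $\T(D)$ is generated by the image of $\K$ together with $\mathcal N$, the operator $\mathcal N$ is central in $\T(D)$, and a subspace of $V(D)$ is a $\T(D)$-submodule exactly when it is a $\K$-submodule on which $\mathcal N$ acts as a scalar.

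Now I would decompose. Writing $V(1)=L_1\oplus W$ with $W=(q-2)L_0$, one has $V(D)=\bigoplus_{p=0}^D\bigoplus_{|S|=p}\bigl(\bigotimes_{i\in S}L_1\bigr)\otimes\bigl(\bigotimes_{i\notin S}W\bigr)$, where $\mathcal N$ acts as $p$ on the term indexed by a $p$-subset $S$, the $\U$-structure there is $(q-2)^{D-p}$ copies of $L_1^{\otimes p}$, and there are $\binom Dp$ such terms. Iterating Theorem~\ref{thm:CGrule_U} in the form $L_m\otimes L_1\cong L_{m+1}\oplus L_{m-1}$ gives a Pascal-type recursion whose solution is that the multiplicity of $L_{p-2k}$ in $L_1^{\otimes p}$ equals the ballot number $\binom pk-\binom p{k-1}=\frac{p-2k+1}{p-k+1}\binom pk$ for $0\leq k\leq\lfloor p/2\rfloor$. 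Assembling these, and noting that $L_{p-2k}$ with $\mathcal N$ acting as the scalar $p$ is, by the computation of the second paragraph, precisely the module $L_{p,k}(D)$ of Proposition~\ref{prop:Kpk} (substitute $\mathcal N=p$ and $n=p-2k$ into the formulas for $\A(D),\A^*(D)$ and use the basis of Lemma~\ref{lem:Vn_U}; it is $\T(D)$-irreducible since it is $\K$-irreducible with $\mathcal N$ scalar), we obtain the claimed decomposition; as a consistency check, $\sum_p\binom Dp(q-2)^{D-p}2^p=q^D=\dim V(D)$.

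I expect the main obstacle to be the bookkeeping in the second and third paragraphs: determining the exact affine relations between $\A(1),\A^*(1)$ and the pair consisting of the $\U$-action on $V(1)$ and the projection $P$, hence between $\A(D),\A^*(D)$ and $(E,F,H,\mathcal N)$, and confirming that the $\A^*(D)$-eigenvalues $\theta_i$ on each Clebsch--Gordan component match the correct $L_{p,k}(D)$. Once that dictionary is fixed, the Clebsch--Gordan iteration, the Pascal recursion, and the closed forms for the multiplicities are routine.
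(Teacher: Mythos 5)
Your proposal is correct and follows essentially the same route as the paper: you split $V(1)$ into the plane $V_1\cong L_1$ and its complement $(q-2)\cdot L_0$ (Lemma~\ref{lem:r0&r1}), tensor over the $D$ coordinates via the primitive comultiplication, and apply the Clebsch--Gordan rule to $L_1^{\otimes p}$ with the ballot-number multiplicities $\frac{p-2k+1}{p-k+1}{p\choose k}$, exactly as in Lemmas~\ref{lem:K1p}--\ref{lem:dec_Vs(D)} combined with Lemma~\ref{lem:decV(D)_HD}. The only difference is presentational: where the paper records the weight-dependent affine shift through the family of representations $r_s(D)$ (Proposition~\ref{prop:rs(D)A&B}), you encode it in the commuting number operator $\mathcal N$, which gives the same dictionary between $\A(D)$, $\A^*(D)$ and the $\K$-action (and only the containment of $\A(D),\A^*(D)$ in the algebra generated by that action and $\mathcal N$ is actually needed, not your stronger ``generated/exactly when'' claims).
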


The algebra $\T(D)$ is a finite-dimensional semisimple algebra. Following from \cite[Theorem~25.10]{Wedderburn1962}, Theorem~\ref{thm:dec_T(D)module} implies the following classification of irreducible $\T(D)$-modules:

\begin{Theorem}\label{thm:T(D)irrmodule}
Let $D$ be a positive integer. Let $\mathbf P(D)$ denote the set consisting of all pairs $(p,k)$ of integers with $0\leq p\leq D$ and $0\leq k\leq \big\lfloor \frac{p}{2}\big\rfloor$. Let $\mathbf M(D)$ denote the set of all isomorphism classes of irreducible $\T(D)$-modules. Then there exists a bijection $\mathcal E\colon \mathbf P(D)\to \mathbf M(D)$ given by
\begin{gather*}
(p,k) \mapsto \text{the isomorphism class of $L_{p,k}(D)$}
\end{gather*}
for all $(p,k)\in \mathbf P(D)$.
\end{Theorem}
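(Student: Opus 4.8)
The plan is to deduce the classification from the semisimplicity of $\T(D)$ together with the explicit decomposition in Theorem~\ref{thm:dec_T(D)module}. Recall that a finite-dimensional semisimple algebra over $\C$ is, by the Artin--Wedderburn theorem, a finite direct product of full matrix algebras over $\C$; consequently it has only finitely many isomorphism classes of irreducible modules, exactly one arising from each matrix factor, and a finite-dimensional module over it is faithful if and only if every irreducible module occurs in that module with nonzero multiplicity. This is the content of \cite[Theorem~25.10]{Wedderburn1962}. The standard module $V(D)$ is faithful by construction, since the associated representation $\T(D)\to \End(V(D))=\Mat_{X^D}(\C)$ is just the inclusion map and hence injective.

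First I would check that $\mathcal E$ is well defined and surjective. Well-definedness is immediate from Proposition~\ref{prop:Kpk}, which in particular asserts that $L_{p,k}(D)$ is an irreducible $\T(D)$-module for every $(p,k)\in\mathbf P(D)$. For surjectivity, write the decomposition of Theorem~\ref{thm:dec_T(D)module} as $V(D)\cong\bigoplus_{(p,k)\in\mathbf P(D)} m_{p,k}\cdot L_{p,k}(D)$, where $m_{p,k}=\frac{p-2k+1}{p-k+1}\binom{D}{p}\binom{p}{k}(q-2)^{D-p}$. Since $q\geq 3$ and $0\leq k\leq\big\lfloor\frac{p}{2}\big\rfloor\leq p\leq D$, each factor in the product defining $m_{p,k}$ is strictly positive, so $m_{p,k}>0$. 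Hence every $L_{p,k}(D)$ with $(p,k)\in\mathbf P(D)$ occurs in $V(D)$, and by uniqueness of the decomposition into irreducibles these are the only irreducible $\T(D)$-modules occurring in $V(D)$. Since $V(D)$ is faithful, every class in $\mathbf M(D)$ occurs in $V(D)$, hence is the class of some $L_{p,k}(D)$; this says $\mathcal E$ is onto.

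It remains to prove injectivity, that is, that the modules $L_{p,k}(D)$ for distinct $(p,k)\in\mathbf P(D)$ are pairwise non-isomorphic. For this I would use the spectrum of $\A^*(D)$ as an invariant. By Proposition~\ref{prop:Kpk}(i), on $L_{p,k}(D)$ the matrix $\A^*(D)$ is diagonalizable with eigenvalues $\theta_i=q(p-i-k)-D$ for $i=0,1,\dots,p-2k$; since $q>0$ these $p-2k+1$ scalars are mutually distinct, the largest being $\theta_0=q(p-k)-D$ and the smallest being $\theta_{p-2k}=qk-D$. An isomorphism of $\T(D)$-modules intertwines the action of $\A^*(D)$ and therefore preserves its set of eigenvalues; so if $L_{p,k}(D)\cong L_{p',k'}(D)$ then $q(p-k)-D=q(p'-k')-D$ and $qk-D=qk'-D$, which force $k=k'$ and then $p=p'$. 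Thus $\mathcal E$ is injective, and combined with the previous paragraph it is a bijection.

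There is no serious obstacle here; the argument is a bookkeeping application of standard semisimple-algebra theory. The two points that require a little care are verifying that every multiplicity $m_{p,k}$ is strictly positive, so that Theorem~\ref{thm:dec_T(D)module} genuinely exhibits each $L_{p,k}(D)$ inside $V(D)$, and exhibiting an invariant --- here the pair of extreme eigenvalues $qk-D$ and $q(p-k)-D$ of $\A^*(D)$ --- that separates the $L_{p,k}(D)$ from one another.
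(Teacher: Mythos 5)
Your proposal is correct and follows essentially the same route as the paper: surjectivity from the faithfulness of the standard module $V(D)$ together with the decomposition of Theorem~\ref{thm:dec_T(D)module} (whose multiplicities are indeed positive), and injectivity by comparing spectra of $\A^*(D)$ on $L_{p,k}(D)$. The only cosmetic difference is that the paper extracts $p-2k=p'-2k'$ from the dimension and $p-k=p'-k'$ from the eigenvalues, while you read off $k$ and $p-k$ from the smallest and largest eigenvalues; the two bookkeeping steps are equivalent.
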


The paper is organized as follows:
In Section~\ref{s:Hopf}, we give the preliminaries on the algebra~$\K$.
In Section~\ref{s:Hamming}, we prove Proposition~\ref{prop:Kpk} and Theorems~\ref{thm:dec_T(D)module}, \ref{thm:T(D)irrmodule} by using Theorem~\ref{thm:CGrule_U}.
In~Appendix~\ref{s:restate}, we give the equivalent statements of Proposition~\ref{prop:Kpk} and Theorems~\ref{thm:dec_T(D)module}, \ref{thm:T(D)irrmodule}.

\section{The Krawtchouk algebra}\label{s:Hopf}

\subsection[Finite-dimensional irreducible K\_w-modules]{Finite-dimensional irreducible $\boldsymbol{\K}$-modules}

Recall the $\U$-module $L_n$ from Lemma~\ref{lem:Vn_U}.
Recall the algebra homomorphism $\zeta\colon \K\to \U$ form Theorem~\ref{thm:KtoU}.
Each $\U$-module can be viewed as a $\K$-module by pulling back via $\zeta$.
We express the $\U$-module $L_n$ as a $\K$-module as follows:

\begin{Lemma}\label{lem:Vn_K}
For any integer $n\geq 0$, the matrices representing $A$, $B$, $C$ with respect to the basis $\{v_i\}_{i=0}^n$ for the $\K$-module $L_n$ are
\begin{gather*}
\begin{pmatrix}
\alpha_0 &\gamma_1 & & &{\bf 0}
\\
\beta_0 &\alpha_1 &\gamma_2
\\
&\beta_1 &\alpha_2 &\ddots
 \\
& &\ddots &\ddots &\gamma_n
 \\
{\bf 0} & & &\beta_{n-1} &\alpha_n
\end{pmatrix}\!,
\quad
\begin{pmatrix}
\theta_0 & & & &{\bf 0}
\\
 &\theta_1
\\
 & &\theta_2
 \\
 & & &\ddots
 \\
{\bf 0} & & & &\theta_n
\end{pmatrix}\!,
\quad
\begin{pmatrix}
0 &-\gamma_1 & & &{\bf 0}
\\
\beta_0 &0 &-\gamma_2
\\
&\beta_1 &0 &\ddots
 \\
& &\ddots &\ddots &-\gamma_n
 \\
{\bf 0} & & &\beta_{n-1} &0
\end{pmatrix}
\end{gather*}
respectively, where
\begin{alignat*}{3}
&\alpha_i=\frac{(2i-n)\omega}{2}\qquad
&&\text{for}\ i=0,1,\dots,n,&
\\
&\beta_i=\frac{(i+1)(1-\omega)}{2}\qquad
&&\text{for}\ i=0,1,\dots,n-1,&
\\
&\gamma_i=\frac{(n-i+1)(1+\omega)}{2}\qquad
&&\text{for}\ i=1,2,\dots,n,&
\\
&\theta_i=\frac{n}{2}-i\qquad
&&\text{for}\ i=0,1,\dots,n.&
\end{alignat*}
\end{Lemma}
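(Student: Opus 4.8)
The plan is to regard $L_n$ as a $\K$-module by pulling the $\U$-action of Lemma~\ref{lem:Vn_U} back along the algebra homomorphism $\zeta\colon\K\to\U$ of Theorem~\ref{thm:KtoU}, and then to read off the three matrices in the basis $\{v_i\}_{i=0}^n$ directly. I would start with $B$: since $\zeta(B)=\frac12 H$, Lemma~\ref{lem:Vn_U} gives $Bv_i=\frac12(n-2i)v_i=\big(\frac n2-i\big)v_i$, so $B$ is represented by the diagonal matrix with entries $\theta_i=\frac n2-i$, as claimed. (Note that only $\zeta$ being a homomorphism is used here, so this works for every $\omega$.)

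Next I would treat $A$. Using $\zeta(A)=\frac{1+\omega}{2}E+\frac{1-\omega}{2}F-\frac{\omega}{2}H$ and linearity, Lemma~\ref{lem:Vn_U} yields
\[
Av_i=\frac{(1+\omega)(n-i+1)}{2}\,v_{i-1}+\frac{(2i-n)\omega}{2}\,v_i+\frac{(1-\omega)(i+1)}{2}\,v_{i+1},
\]
where the $v_{i-1}$-term is dropped when $i=0$ and the $v_{i+1}$-term when $i=n$, consistent with $Ev_0=0$ and $Fv_n=0$. Reading off the coefficients of $v_{i-1}$, $v_i$, $v_{i+1}$ then gives exactly the superdiagonal entries $\gamma_i$, the diagonal entries $\alpha_i$, and the subdiagonal entries $\beta_i$ of the asserted tridiagonal matrix.

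Finally, for $C$ there are two equally quick routes: either compute directly from $\zeta(C)=-\frac{1+\omega}{2}E+\frac{1-\omega}{2}F$, obtaining $Cv_i=-\gamma_i v_{i-1}+\beta_i v_{i+1}$; or use $C=[A,B]$ together with the fact that $B$ is represented by $\mathrm{diag}(\theta_0,\dots,\theta_n)$, so that the $(j,k)$-entry of the matrix of $C$ equals $A_{jk}(\theta_k-\theta_j)$ with $\theta_k-\theta_j=j-k$; this annihilates the diagonal, turns $\gamma_i$ into $-\gamma_i$ and leaves $\beta_i$ unchanged, producing the stated matrix for $C$. There is no real obstacle in this argument; the only things to watch are the matrix/vector indexing convention, i.e.\ matching ``coefficient of $v_j$ in $Av_i$'' with the correct matrix entry, and the two endpoint cases $i=0$ and $i=n$, both of which are automatically consistent with the displayed tridiagonal shape.
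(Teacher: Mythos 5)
Your proposal is correct and is exactly the computation the paper intends (the paper states the lemma without proof, immediately after noting that $L_n$ becomes a $\K$-module by pulling back via $\zeta$): you apply the formulas for $\zeta(A)$, $\zeta(B)$, $\zeta(C)$ from Theorem~\ref{thm:KtoU} to the basis of Lemma~\ref{lem:Vn_U} and read off the entries, with the endpoint cases and the indexing handled correctly. Your alternative check of the $C$-matrix via $[A,B]$ and the diagonal form of $B$ is a nice consistency verification but not a different method.
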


The finite-dimensional irreducible $\K$-modules are classified as follows:

{\samepage\begin{Theorem}\label{thm:classification}\quad
\begin{enumerate}\itemsep=-1pt
\item[$(i)$] If $\omega=-1$, then
any finite-dimensional irreducible $\K$-module $V$ is of dimension one and there is a scalar $\mu\in \C$ such that
$Av=\mu v$, $Bv=\mu v$
 for all $v\in V$.

\item[$(ii)$] If $\omega=1$, then any finite-dimensional irreducible $\K$-module $V$ is of dimension one and there is a scalar $\mu\in \C$ such that
$Av=\mu v$, $Bv=-\mu v$ for all $v\in V$.

\item[$(iii)$] If $\omega^2\not=1$, then $L_n$ is the unique $(n+1)$-dimensional irreducible $\K$-module up to isomorphism for every integer $n\geq 0$.
\end{enumerate}
\end{Theorem}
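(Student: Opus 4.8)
The plan is to dispose of case (iii) quickly via Theorem~\ref{thm:KtoU} and to handle cases (i) and (ii) through the solvability of $\mathcal K_\omega$.

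For (iii), assume $\omega^2\neq 1$. Then $\zeta\colon\K\to\U$ of Theorem~\ref{thm:KtoU} is an algebra isomorphism, so pulling back via $\zeta$ is an equivalence between the category of $\U$-modules and the category of $\K$-modules that preserves dimension and sends irreducible modules to irreducible modules. Under this equivalence the $\U$-module $L_n$ of Lemma~\ref{lem:Vn_U} corresponds to the $\K$-module $L_n$ of Lemma~\ref{lem:Vn_K}, which is therefore an $(n+1)$-dimensional irreducible $\K$-module. Conversely, an $(n+1)$-dimensional irreducible $\K$-module transports through $\zeta^{-1}$ to an $(n+1)$-dimensional irreducible $\U$-module, which is isomorphic to $L_n$ by Lemma~\ref{lem:classification_Umodule}; pulling back again gives (iii).

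For (i) and (ii) we have $\omega^2=1$. First I would check that $\mathcal K_\omega$ is solvable. Set $d=a+\omega b$. A direct computation gives $[d,c]=(1-\omega^2)b$, which vanishes since $\omega^2=1$, and one checks that each of $[a,c]$, $[a,d]$, $[b,c]$, $[b,d]$ lies in $\C c+\C d$; hence $\C c+\C d$ is a two-dimensional abelian ideal of $\mathcal K_\omega$ whose quotient is one-dimensional. Consequently $[\mathcal K_\omega,\mathcal K_\omega]\subseteq\C c+\C d$ and $\mathcal K_\omega$ is solvable. Since $\K$ is the universal enveloping algebra of $\mathcal K_\omega$ (Lemma~\ref{lem:presentation_K}), a finite-dimensional irreducible $\K$-module is the same thing as a finite-dimensional irreducible $\mathcal K_\omega$-module; by Lie's theorem such a module $V$ is one-dimensional.

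It remains to identify the action of $A$ and $B$. On the one-dimensional $\K$-module $V$ the generators $A$, $B$, $C$ act as scalars; since $C=[A,B]$ is a commutator of scalars, $C$ acts as $0$, and then evaluating relation \eqref{R2} (equivalently \eqref{R2_v2}) on $V$ forces the scalar by which $B$ acts to be $-\omega$ times the scalar by which $A$ acts. For $\omega=-1$ this gives $Av=\mu v$, $Bv=\mu v$, and for $\omega=1$ it gives $Av=\mu v$, $Bv=-\mu v$, for some $\mu\in\C$; conversely any such assignment is immediately seen to satisfy \eqref{R1} and \eqref{R2}, so these one-dimensional modules exist. The only substantive step is the solvability of $\mathcal K_\omega$ when $\omega^2=1$---recognizing $\C c+\C(a+\omega b)$ as an abelian ideal---after which Lie's theorem and the one-line scalar computation finish the proof; I anticipate no real difficulty beyond getting the bracket relations right.
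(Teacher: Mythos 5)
Your proof is correct, and for parts (i) and (ii) it takes a genuinely different route from the paper. The paper proves (i) by a trace argument combined with a bidiagonal normal form for the pair $A$, $B$ (imported from the author's earlier work on Askey--Wilson/Bannai--Ito/Racah module classifications): one writes $A$ lower bidiagonal and $B$ upper bidiagonal with a common diagonal forming an arithmetic sequence, shows the superdiagonal entries $\varphi_i$ satisfy $\varphi_{i-1}-2\varphi_i+\varphi_{i+1}=0$ with $\varphi_0=\varphi_{n+1}=0$, hence vanish, and then exhibits a proper invariant subspace, contradicting irreducibility when $n\geq 1$; part (ii) is then deduced from the isomorphism $\mathfrak K_{-1}\to\mathfrak K_1$ sending $A\mapsto A$, $B\mapsto -B$, and part (iii) exactly as you do, from Theorem~\ref{thm:KtoU} and Lemma~\ref{lem:classification_Umodule}. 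Your replacement for (i)--(ii) is sound: when $\omega^2=1$ the span of $c$ and $d=a+\omega b$ is a two-dimensional abelian ideal of $\mathcal K_\omega$ (indeed $[a,c]=\omega d$, $[b,c]=-d$, $[a,d]=\omega c$, $[b,d]=-c$, $[c,d]=(\omega^2-1)b=0$), so $\mathcal K_\omega$ is solvable; since the paper identifies $\K$ with the universal enveloping algebra of $\mathcal K_\omega$, finite-dimensional irreducible $\K$-modules are finite-dimensional irreducible $\mathcal K_\omega$-modules, and Lie's theorem (over $\C$) forces them to be one-dimensional, after which the scalar computation pins down $B=-\omega A$ and treats $\omega=\pm1$ uniformly. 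What the paper's argument buys is independence from Lie-theoretic structure theory, staying within the Leonard-pair style linear algebra it cites; what yours buys is a shorter, uniform, essentially self-contained argument modulo Lie's theorem, without outsourcing the key step. One tiny slip: by Lemma~\ref{lem:presentation_K}, relation \eqref{R2} rewrites as \eqref{R3_v2}, not \eqref{R2_v2}; this is immaterial, since on a one-dimensional module $C=[A,B]$ acts as $0$ and either of \eqref{R2_v2}, \eqref{R3_v2} (or \eqref{R1}, \eqref{R2} directly) yields the same constraint when $\omega^2=1$, namely the scalars asserted in (i) and (ii).
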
}

\begin{proof}
(i) Let $n\geq 0$ be an integer. Let $V$ denote an $(n+1)$-dimensional irreducible $\mathfrak{K}_{-1}$-module. Since the trace of the left-hand side of \eqref{R1} on $V$ is zero, the elements $A$ and $B$ have the same trace on $V$. If $n=0$ then there exists a scalar $\mu\in \C$ such that $Av=Bv=\mu v$ for all $v\in V$.

To see Theorem~\ref{thm:classification}(i), it remains to assume that $n\geq 1$ and we seek a contradiction. Applying the method proposed in \cite{Huang:2015,Huang:BImodule,SH:2019-1}, there exists a basis $\{u_i\}_{i=0}^n$ for $V$ with respect to which the matrices representing $A$ and $B$ are of the forms
\begin{gather*}\setlength{\tabcolsep}{2.0pt}
\begin{pmatrix}
\theta_0 & & & &{\bf 0}
\\
1 &\theta_1
\\
&1 &\theta_2
 \\
& &\ddots &\ddots
 \\
{\bf 0} & & &1 &\theta_n
\end{pmatrix},
\qquad
\begin{pmatrix}
\theta_0 &\varphi_1 & & &{\bf 0}
\\
 &\theta_1 &\varphi_2
\\
 & &\theta_2 &\ddots
 \\
 & & &\ddots &\varphi_n
 \\
{\bf 0} & & & &\theta_n
\end{pmatrix},
\end{gather*}
respectively. Here $\{\theta_i\}_{i=0}^n$ is an arithmetic sequence with common difference $-1$ and the sequence $\{\varphi_i\}_{i=1}^n$ satisfies
$\varphi_{i-1}-2\varphi_i+\varphi_{i+1}=0$, $1\leq i\leq n$,
where $\varphi_0$ and $\varphi_{n+1}$ are interpreted as zero. Solving the above recurrence yields that $\varphi_i=0$ for all $i=1,2,\dots,n$. Thus the subspace of $V$ spanned by $\{u_i\}_{i=1}^n$ is a nonzero $\mathfrak{K}_{-1}$-module, which is a contradiction to the irreducibility of $V$.

(ii) Using Definition \ref{defn:K}, it is routine to verify that there exists a unique algebra isomorphism $\mathfrak K_{-1}\to \mathfrak K_1$ that sends $A$ to $A$ and $B$ to $-B$.
Theorem~\ref{thm:classification}(ii) follows from Theorem~\ref{thm:classification}(i) and the above isomorphism.

(iii) Theorem~\ref{thm:classification}(iii) follows immediate from Lemma~\ref{lem:classification_Umodule} and Theorem~\ref{thm:KtoU}.
\end{proof}

\begin{Lemma}\label{lem:auto_K}
There exists a unique algebra automorphism of $\K$ that sends
$A \mapsto B$, $B \mapsto A$, $C \mapsto -C$.
\end{Lemma}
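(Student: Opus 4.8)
The plan is to invoke the universal property of $\K$ afforded by its presentation in Definition~\ref{defn:K}. First I would observe that the two defining relations \eqref{R1} and \eqref{R2} are interchanged by the substitution $A\leftrightarrow B$: replacing $A$ by $B$ and $B$ by $A$ in \eqref{R1} yields exactly \eqref{R2}, and conversely. Hence the pair $(B,A)$ of elements of $\K$ satisfies the same relations that $(A,B)$ does, so there is a (unique) algebra homomorphism $\kappa\colon \K\to\K$ with $\kappa(A)=B$ and $\kappa(B)=A$.

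Next I would check that $\kappa$ is an automorphism. The composition $\kappa\circ\kappa$ is an algebra homomorphism $\K\to\K$ fixing both $A$ and $B$; since $A$ and $B$ generate $\K$, this forces $\kappa\circ\kappa=\mathrm{id}_\K$. Therefore $\kappa$ is invertible with $\kappa^{-1}=\kappa$, i.e., it is an algebra automorphism. For the action on $C$, using $C=[A,B]$ and that $\kappa$ preserves commutators,
\[
\kappa(C)=\kappa([A,B])=[\kappa(A),\kappa(B)]=[B,A]=-[A,B]=-C.
\]
Finally, uniqueness: any algebra automorphism of $\K$ sending $A\mapsto B$ and $B\mapsto A$ agrees with $\kappa$ on the generating set $\{A,B\}$, hence equals $\kappa$; in particular it must send $C\mapsto -C$ automatically, so the stated map is the unique such automorphism.

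There is no real obstacle here beyond the bookkeeping of verifying that the substitution $A\leftrightarrow B$ turns \eqref{R1} into \eqref{R2}; once that symmetry is noted, everything follows formally from the universal property and the fact that $A,B$ generate $\K$. (One could equally well use the presentation in Lemma~\ref{lem:presentation_K}: the substitution $A\mapsto B$, $B\mapsto A$, $C\mapsto -C$ sends \eqref{R1_v2} to $[B,A]=-C$, \eqref{R2_v2} to $[B,-C]=A+\omega B$, and \eqref{R3_v2} to $[-C,A]=B+\omega A$, which are precisely \eqref{R1_v2}, \eqref{R3_v2}, \eqref{R2_v2} again; this makes the automorphism property transparent without separately computing $\kappa(C)$.)
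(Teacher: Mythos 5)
Your proof is correct and follows essentially the same route as the paper, which simply declares the verification via the presentation routine; you supply the details (the symmetry of \eqref{R1}--\eqref{R2} under $A\leftrightarrow B$, the involution argument $\kappa\circ\kappa=\mathrm{id}$ giving bijectivity, and the forced action $C\mapsto -C$ via $C=[A,B]$), and your parenthetical check against the presentation of Lemma~\ref{lem:presentation_K} is exactly the computation the paper has in mind.
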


\begin{proof}
It is routine to verify the lemma by using Lemma~\ref{lem:presentation_K}.
\end{proof}

\begin{Lemma}\label{lem2:Vn_K}
Suppose that $\omega^2\not=1$. For any integer $n\geq 0$,
there exists a basis for the $\K$-mod\-ule~$L_n$ with respect to which
the matrices representing $A$, $B$, $C$ are
\begin{gather*}
\setlength{\arraycolsep}{4.5pt}
\begin{pmatrix}
\theta_0 & & & &{\bf 0}
\\
 &\theta_1
\\
 & &\theta_2
 \\
 & & &\ddots
 \\
{\bf 0} & & & &\theta_n
\end{pmatrix}\!,
\quad\
\begin{pmatrix}
\alpha_0 &\gamma_1 & & &{\bf 0}
\\
\beta_0 &\alpha_1 &\gamma_2
\\
&\beta_1 &\alpha_2 &\ddots
 \\
& &\ddots &\ddots &\gamma_n
 \\
{\bf 0} & & &\beta_{n-1} &\alpha_n
\end{pmatrix}\!,
\quad\
\begin{pmatrix}
0 &\gamma_1 & & &{\bf 0}
\\
-\beta_0 &0 &\gamma_2
\\
&-\beta_1 &0 &\ddots
 \\
& &\ddots &\ddots &\gamma_n
 \\
{\bf 0} & & &-\beta_{n-1} &0
\end{pmatrix}
\end{gather*}
respectively, where
\begin{alignat*}{3}
&\alpha_i=\frac{(2i-n)\omega}{2}\qquad
&&\text{for} \ i=0,1,\dots,n,&
\\
&\beta_i=\frac{(i+1)(1-\omega)}{2}\qquad
&&\text{for} \ i=0,1,\dots,n-1,&
\\
&\gamma_i=\frac{(n-i+1)(1+\omega)}{2}\qquad
&&\text{for} \ i=1,2,\dots,n,&
\\
&\theta_i=\frac{n}{2}-i
\qquad&&\text{for} \ i=0,1,\dots,n.&
\end{alignat*}
\end{Lemma}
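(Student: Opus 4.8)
The plan is to obtain the desired basis from the one in Lemma~\ref{lem:Vn_K} by transporting it through the automorphism of Lemma~\ref{lem:auto_K}. Indeed, let $\psi\colon\K\to\K$ denote the algebra automorphism sending $A\mapsto B$, $B\mapsto A$, $C\mapsto -C$. Pulling back the $\K$-module $L_n$ along $\psi$ yields a new $\K$-module structure on the same underlying vector space; call it $L_n^{\psi}$. In $L_n^{\psi}$ the action of $A$ equals the old action of $B$, the action of $B$ equals the old action of $A$, and the action of $C$ equals the negative of the old action of $C$. So with respect to the basis $\{v_i\}_{i=0}^n$ of Lemma~\ref{lem:Vn_K}, the matrices representing $A$, $B$, $C$ on $L_n^{\psi}$ are exactly the second, first, and negative-of-third matrices displayed in Lemma~\ref{lem:Vn_K}; after the sign is absorbed the $C$-matrix becomes the one with entries $+\gamma_i$ above the diagonal and $-\beta_{i-1}$ below it, i.e., precisely the three matrices in the statement of Lemma~\ref{lem2:Vn_K}.

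It remains to identify $L_n^{\psi}$ with $L_n$ as a $\K$-module, which is where the hypothesis $\omega^2\neq1$ enters. By Lemma~\ref{lem:Vn_K} the element $B$ acts on $L_n$ with the $n+1$ distinct eigenvalues $\theta_0,\theta_1,\dots,\theta_n$, hence the same is true of $A$ on $L_n^{\psi}$; in particular $L_n^{\psi}$ is $(n+1)$-dimensional and, being a twist of the irreducible module $L_n$, is itself irreducible. By Theorem~\ref{thm:classification}(iii), when $\omega^2\neq1$ the module $L_n$ is, up to isomorphism, the unique $(n+1)$-dimensional irreducible $\K$-module, so $L_n^{\psi}\cong L_n$. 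Fixing such an isomorphism and pushing the basis $\{v_i\}_{i=0}^n$ through it produces a basis of $L_n$ with respect to which $A$, $B$, $C$ are represented by the three claimed matrices.

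The only genuine subtlety is the appeal to Theorem~\ref{thm:classification}(iii): one must know that $L_n^{\psi}$ is irreducible, and for this the cleanest argument is simply that any submodule of $L_n^{\psi}$ is a submodule of $L_n$ for the twisted action, and $\psi$ being an automorphism carries submodules to submodules, so irreducibility is preserved. Everything else is bookkeeping: reading off the matrices from Lemma~\ref{lem:Vn_K}, swapping the roles of $A$ and $B$, and tracking the sign on $C$. No new computation with the relations \eqref{R1_v2}--\eqref{R3_v2} is required, since the automorphism in Lemma~\ref{lem:auto_K} has already been verified.
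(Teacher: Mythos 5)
Your proof is correct and follows essentially the same route as the paper: twist $L_n$ by the automorphism of Lemma~\ref{lem:auto_K}, read off the matrices of $A$, $B$, $C$ in the basis $\{v_i\}_{i=0}^n$ of Lemma~\ref{lem:Vn_K}, and invoke Theorem~\ref{thm:classification}(iii) to identify the twisted module with $L_n$. Your extra remarks on the sign of $C$ and on irreducibility being preserved under twisting are just the details the paper leaves implicit.
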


\begin{proof}
Let $L_n'$ denote the irreducible $\K$-module obtained by twisting the $\K$-module $L_n$ via the automorphism of $\K$ given in Lemma~\ref{lem:auto_K}.
Recall the basis $\{v_i\}_{i=0}^n$ for $L_n$ from Lemma~\ref{lem:Vn_K}. Observe that the three matrices described in Lemma~\ref{lem2:Vn_K} are the matrices representing $A$, $B$, $C$ with respect to the basis $\{v_i\}_{i=0}^n$ for the $\K$-module $L_n'$.
By Theorem~\ref{thm:classification}(iii), the $\K$-module $L_n'$ is isomorphic to $L_n$. The lemma follows.
\end{proof}

Leonard pairs were introduced in \cite{LP-dual,Askeyscheme,lp&awrelation} by P.~Terwilliger. Suppose that $\omega^2\not=1$. By~Lemmas \ref{lem:Vn_K} and \ref{lem2:Vn_K}, the elements $A$ and $B$ act on the $\K$-module $L_n$ as a Leonard pair. The result was first stated in \cite[Theorem 6.3]{K&sl2}.

\subsection{The Krawtchouk algebra as a Hopf algebra}

Let $\H$ denote an algebra.
Recall that $\H$ is called a {\it Hopf algebra} if there are two algebra homomorphisms $\e\colon \H\to \C$, $\Delta\colon \H\to \H \otimes \H$
and a linear map $S\colon \H\to \H$
that satisfy the following properties:
\begin{enumerate}\itemsep=0pt
\item[(H1)] $(1\otimes \Delta)\circ \Delta=(\Delta\otimes 1)\circ \Delta$,

\item[(H2)]
$m\circ(1\otimes (\iota \circ \e))\circ \Delta=m\circ( (\iota \circ \e)\otimes 1)\circ \Delta=1$,

\item[(H3)] $m\circ (1\otimes S)\circ \Delta=m\circ (S\otimes 1)\circ \Delta=\iota\circ \e$.
\end{enumerate}
Here $m\colon \H\otimes\H\to \H$ is the multiplication map and $\iota\colon \C\to \H$ is the unit map defined by $\iota(c)=c 1$ for all $c\in\C$.
Note that $m$ is a linear map and $\iota$ is an algebra homomorphism.

Suppose that (H1)--(H3) hold. Then the maps $\e$, $\Delta$, $S$ are called the {\it counit}, {\it comultiplication} and {\it antipode} of $\H$, respectively. Let $n$ be a positive integer.
The $n$-fold comultiplication of $\H$ is the algebra homomorphism $\Delta_n\colon \H\to \H^{\otimes (n+1)}$ inductively defined by
\begin{gather*}
\Delta_n=\big(1^{\otimes(n-1)}\otimes \Delta\big)\circ \Delta_{n-1}.
\end{gather*}
Here $\Delta_0$ is interpreted as the identity map of $\H$.
We may regard every $\H^{\otimes (n+1)}$-module as an $\H$-module by pulling back via $\Delta_n$. Note that
\begin{gather}\label{Delta_n}
\Delta_n=\big(1^{\otimes(n-i)}\otimes \Delta\otimes 1^{\otimes (i-1)}\big)\circ \Delta_{n-1}\qquad
\text{for all} \ i=1,2,\dots,n.
\end{gather}
It follows from \eqref{Delta_n} that
\begin{gather}\label{Delta_n'}
\Delta_n=(\Delta_{n-1}\otimes 1)\circ \Delta=(1\otimes \Delta_{n-1})\circ \Delta.
\end{gather}

Recall from Section~\ref{s:introduction} that $\K$ is the universal enveloping algebra of $\mathcal K_\omega$. Hence $\K$ is a Hopf algebra.
For the reader's convenience, we give a detailed verification for the Hopf algebra structure of $\K$. By an algebra antihomomorphism, we mean a unital algebra antihomomorphism.

{\samepage\begin{Lemma}\label{lem:Hopf_K}\quad
\begin{enumerate}\itemsep=0pt
\item[$(i)$] There exists a unique algebra homomorphism $\e\colon \K\to \C$ given by
\begin{gather*}
\e(A)=0,\qquad
\e(B)=0,\qquad
\e(C)=0.
\end{gather*}

\item[$(ii)$] There exists a unique algebra homomorphism $\Delta\colon \K\to \K\otimes \K$ given by
\begin{gather*}
\Delta(A)=A\otimes 1+1\otimes A,\qquad
\Delta(B)=B\otimes 1+1\otimes B,\qquad
\Delta(C)=C\otimes 1+1\otimes C.
\end{gather*}

\item[$(iii)$] There exists a unique algebra antihomomorphism $S\colon \K\to \K$ given by
\begin{gather*}
S(A)=-A,\qquad
S(B)=-B,\qquad
S(C)=-C.
\end{gather*}

\item[$(iv)$] The algebra~$\K$ is a Hopf algebra on which the counit, comultiplication and antipode are the above maps $\e$, $\Delta$, $S$, respectively.
\end{enumerate}
\end{Lemma}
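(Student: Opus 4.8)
The plan is to prove the four parts in sequence, using only the presentation of $\K$ from Lemma~\ref{lem:presentation_K} and the standard fact (already recalled in the excerpt) that the universal enveloping algebra of a Lie algebra is a Hopf algebra. For parts (i)--(iii), the mechanism is always the same: exhibit a map on the generators and invoke a universal property to conclude that it extends uniquely to an algebra (anti)homomorphism. Concretely, for (i) I would observe that the zero map sends the left-hand sides of \eqref{R1_v2}--\eqref{R3_v2} to $0$ in $\C$ and the right-hand sides to $0$ as well, so by the universal property of $\K$ (being defined by generators and relations) there is a unique algebra homomorphism $\e\colon\K\to\C$ with $\e(A)=\e(B)=\e(C)=0$.

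For (ii), I would check that the primitive assignments $\Delta(A)=A\otimes 1+1\otimes A$, etc., are compatible with \eqref{R1_v2}--\eqref{R3_v2} in $\K\otimes\K$. The quick way is to note that for primitive elements $x,y$ one has $[\Delta(x),\Delta(y)]=\Delta([x,y])$ as elements of $\K\otimes\K$, because the cross terms $[x\otimes 1,1\otimes y]$ and $[1\otimes x,y\otimes 1]$ vanish; hence each relation $[A,B]=C$, $[A,C]=B+\omega A$, $[C,B]=A+\omega B$ is preserved, and the universal property again yields the unique algebra homomorphism $\Delta$. For (iii), I would use that $S$ being an antihomomorphism reverses products, so $S([x,y])=S(xy-yx)=S(y)S(x)-S(x)S(y)=-[S(x),S(y)]$; with $S(A)=-A$, $S(B)=-B$, $S(C)=-C$ this gives $S([A,B])=-[-A,-B]=-[A,B]=-C=S(C)$, and similarly for the other two relations (the linear terms on the right also flip sign consistently). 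Then I invoke the universal property of $\K$ as applied to the \emph{opposite} algebra $\K^{\mathrm{op}}$ to get the unique antihomomorphism $S$.

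For (iv), I would verify axioms (H1)--(H3) directly. Since $\e$, $\Delta$, $S$ and the two sides of each axiom are all algebra (anti)homomorphisms or compositions thereof — hence determined by their action on the generators $A$, $B$, $C$ — it suffices to check each identity on these three generators. On a primitive generator $x\in\{A,B,C\}$: for (H1), both $(1\otimes\Delta)\circ\Delta$ and $(\Delta\otimes 1)\circ\Delta$ send $x$ to $x\otimes 1\otimes 1+1\otimes x\otimes 1+1\otimes 1\otimes x$; for (H2), $m\circ(1\otimes(\iota\circ\e))\circ\Delta$ sends $x$ to $x\cdot 1+1\cdot\e(x)=x$ and symmetrically on the other side; for (H3), $m\circ(1\otimes S)\circ\Delta$ sends $x$ to $x\cdot S(1)+1\cdot S(x)=x-x=0=\iota(\e(x))$, and likewise for $m\circ(S\otimes 1)\circ\Delta$. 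This settles all three Hopf axioms.

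I do not anticipate a serious obstacle; the only point requiring a little care is the bookkeeping in (iii) — confirming that an \emph{anti}homomorphism is the correct notion and that the universal property is being applied to $\K^{\mathrm{op}}$ rather than $\K$ — together with making explicit in each of (i)--(iii) why the relations \eqref{R1_v2}--\eqref{R3_v2} (equivalently \eqref{R1}--\eqref{R2}) are respected, so that the universal property genuinely applies. Everything else is a routine check on generators. Alternatively, the whole lemma follows at once from the observation that $\K$ is the universal enveloping algebra $U(\mathcal K_\omega)$ together with the standard Hopf structure on $U(\mathfrak g)$ for any Lie algebra $\mathfrak g$; the verification above is just the unwinding of that fact for $\mathfrak g=\mathcal K_\omega$.
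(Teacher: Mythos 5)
Parts (i)--(iii) and your check of (H1) are fine and match the paper: both sides of (H1) are compositions of algebra homomorphisms, so comparing them on $A$, $B$, $C$ suffices. The gap is in your treatment of (H2) and (H3). You assert that the two sides of each axiom are ``algebra (anti)homomorphisms or compositions thereof, hence determined by their action on the generators,'' but this is false for the left-hand sides: the multiplication map $m\colon \K\otimes\K\to\K$ is not an algebra homomorphism (since $\K$ is noncommutative), and $1\otimes S$ is neither a homomorphism nor an antihomomorphism of $\K\otimes\K$. Thus $m\circ(1\otimes S)\circ\Delta$ is a priori only a \emph{linear} map, and a linear map is determined by its values on a spanning set, not on an algebra generating set; checking it on $A$, $B$, $C$ alone proves nothing about its values on products such as $AB$. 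For (H2) your argument can be repaired cheaply, since $m\circ(1\otimes(\iota\circ\e))$ sends $a\otimes b\mapsto a\,\e(b)$ and is multiplicative, so the composite with $\Delta$ is an algebra homomorphism and the generator check is legitimate (the paper instead proves multiplicativity by hand). For (H3), however, no such regrouping exists, and this is precisely where the paper spends the bulk of its proof: writing $\Delta(x)=\sum_i x_i^{(1)}\otimes x_i^{(2)}$, it shows that the set of $x$ with $m\circ(1\otimes S)\circ\Delta(x)=(\iota\circ\e)(x)$ is closed under multiplication, using the antihomomorphism property of $S$ together with the fact that $(\iota\circ\e)$ takes values in the central scalars $\C 1$; only then does the evaluation on $1$, $A$, $B$, $C$ yield (H3) on all of $\K$ (and similarly for $m\circ(S\otimes 1)\circ\Delta$ and for the two maps in (H2)). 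Your proposal is missing exactly this closure argument (or an explicit appeal to the standard lemma that an algebra antihomomorphism of a bialgebra satisfying the antipode identities on algebra generators is an antipode).

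Your closing alternative---that the lemma follows because $\K$ is the universal enveloping algebra $U(\mathcal K_\omega)$ and $U(\mathfrak g)$ carries the standard Hopf structure---is complete and is in fact the remark the paper makes immediately before the lemma (with a citation for the general fact); the paper's proof is explicitly offered as a self-contained verification of that citation. So either rely on the citation outright, or, if you keep the direct verification, supply the multiplicative-closure step for (H2) and (H3) as above.
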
}

\begin{proof}
(i)--(iii) It is routine to verify Lemma~\ref{lem:Hopf_K}(i)--(iii) by using Definition \ref{defn:K}.

(iv) Using Lemma~\ref{lem:Hopf_K}(ii), it yields that $(1\otimes \Delta)\circ \Delta$ and $(\Delta\otimes 1)\circ \Delta$ agree at the generators $A$, $B$, $C$ of $\K$. Since $\Delta$ is an algebra homomorphism, the maps $(1\otimes \Delta)\circ \Delta$ and $(\Delta\otimes 1)\circ \Delta$ are algebra homomorphisms. Hence {\bf (H1)} holds for $\K$.

Let $k=m\circ (1\otimes (\iota\circ \e))\circ \Delta$ and $k'=m\circ ((\iota\circ \e)\otimes 1)\circ \Delta$. Evidently, $k$ and $k'$ are linear maps.
Using Lemma~\ref{lem:Hopf_K}(i), (ii) yields that
\begin{gather*}
k(1)=k'(1)=1,
\qquad
k(A)=k'(A)=A,
\qquad
k(B)=k'(B)=B,
\qquad
k(C)=k'(C)=C.
\end{gather*}
Let $x$, $y$ be any two elements of $\K$.
To see that $k=1$ it remains to check that $k(xy)=k(x)k(y)$. We can write
\begin{align}
&\Delta(x)=\sum_{i=1}^n x_i^{(1)}\otimes x_i^{(2)},
\label{Dx}
\\
&\Delta(y)=\sum_{i=1}^n y_i^{(1)}\otimes y_i^{(2)},
\label{Dy}
\end{align}
where $n\geq 1$ is an integer and $x_i^{(1)},x_i^{(2)},y_i^{(1)},y_i^{(2)}\in \K$ for $1\leq i\leq n$. Then
\begin{gather*}
k(xy)=
\sum_{i=1}^n \sum_{j=1}^n x_i^{(1)}\cdot y_j^{(1)}\cdot (\iota\circ \e)\big(x_i^{(2)}\big)\cdot (\iota\circ \e)\big(y_j^{(2)}\big).
\end{gather*}
Since each of $(\iota\circ \e)\big(x_i^{(2)}\big)$ and $(\iota\circ \e)\big(y_j^{(2)}\big)$ is a scalar multiple of $1$, it follows that
\begin{gather*}
k(xy)=\bigg(\sum_{i=1}^n x_i^{(1)}\cdot (\iota\circ \e)\big(x_i^{(2)}\big)\bigg)
\bigg(\sum_{j=1}^n y_j^{(1)}\cdot (\iota\circ \e)\big(y_j^{(2)}\big)\bigg)=k(x)k(y).
\end{gather*}
By a similar argument, one may show that $k'=1$. Hence (H2) holds for $\K$.

Let $h=m\circ (1\otimes S)\circ \Delta$ and $h'=m\circ (S \otimes 1)\circ \Delta$. Evidently, $h$ and $h'$ are linear maps.
Using Lemma~\ref{lem:Hopf_K}(ii), (iii) yields that
\begin{alignat*}{3}
&h(1)=h'(1)=(\iota\circ \e)(1)=1,
\qquad&&
h(A)=h'(A)=(\iota\circ \e)(A)=0,&
\\
&h(B)=h'(B)=(\iota\circ \e)(B)=0,
\qquad&&
h(C)=h'(C)=(\iota\circ \e)(C)=0.&
\end{alignat*}
Let $x$, $y$ be any two elements of $\K$ and suppose that $h(x)=(\iota\circ \e)(x)$ and $h(y)=(\iota\circ \e)(y)$.
To see that $h=\iota\circ \e$, it suffices to check that $h(xy)=h(x)h(y)$. Applying \eqref{Dx} and \eqref{Dy}, one finds that
\begin{align*}
h(xy)=\sum_{i=1}^n \sum_{j=1}^n x_i^{(1)} y_j^{(1)} S\big(x_i^{(2)} y_j^{(2)}\big).
\end{align*}
Using the antihomomorphism property of $S$, we obtain
\begin{align*}
h(xy)&=
\sum_{i=1}^n \sum_{j=1}^n x_i^{(1)} y_j^{(1)} S\big(y_j^{(2)}\big) S\big(x_i^{(2)}\big)
=\sum_{i=1}^n x_i^{(1)} \bigg(\sum_{j=1}^n y_j^{(1)} S\big(y_j^{(2)}\big)\bigg) S\big(x_i^{(2)}\big)
\\
&=\sum_{i=1}^n x_i^{(1)} h(y)S\big(x_i^{(2)}\big).
\end{align*}
Since $h(y)=(\iota\circ \e)(y)$ is a scalar multiple of $1$, it follows that
\begin{align*}
h(xy)= \sum_{i=1}^n x_i^{(1)}S\big(x_i^{(2)}\big) h(y)=h(x)h(y).
\end{align*}
By a similar argument, one can show that $h'=\iota\circ \e$. Hence (H3) holds for $\K$. The result follows.
\end{proof}

\begin{Theorem}\label{thm:CGrule_K}
For any integers $m,n\geq 0$, the $\K$-module $L_m\otimes L_n$ is isomorphic to
\[
\bigoplus_{p=0}^{\min\{m,n\}}
L_{m+n-2p}.
\]
\end{Theorem}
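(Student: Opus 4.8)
The plan is to deduce Theorem~\ref{thm:CGrule_K} from its $\U$-counterpart, Theorem~\ref{thm:CGrule_U}, by pulling everything back along the algebra homomorphism $\zeta\colon \K\to \U$ of Theorem~\ref{thm:KtoU}. The one fact that must be isolated is that $\zeta$ intertwines the comultiplications, i.e., $(\zeta\otimes\zeta)\circ\Delta=\Delta\circ\zeta$ as maps $\K\to\U\otimes\U$. Since the $\K$-module structure on a tensor product is by definition obtained by pulling back the $\K\otimes\K$-action through $\Delta$, this compatibility will let us transport the $\U$-module decomposition of $L_m\otimes L_n$ verbatim to the $\K$-setting; note that this uses only the \emph{existence} of $\zeta$, not its invertibility, so the argument will cover every value of $\omega$ (in particular $\omega^2=1$, where $L_n$ is no longer $\K$-irreducible for $n\geq 1$, yet the module isomorphism still makes sense).

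First I would verify $(\zeta\otimes\zeta)\circ\Delta=\Delta\circ\zeta$. Both sides are algebra homomorphisms, so it suffices to compare them on the generators $A$, $B$, $C$ of $\K$. By Lemma~\ref{lem:Hopf_U} the elements $E$, $F$, $H$ are primitive in $\U$, hence so is any linear combination of them; in particular $\zeta(A)$, $\zeta(B)$, $\zeta(C)$ are primitive. Therefore, using Lemma~\ref{lem:Hopf_K}(ii), $\Delta(\zeta(A))=\zeta(A)\otimes 1+1\otimes\zeta(A)=(\zeta\otimes\zeta)(A\otimes 1+1\otimes A)=(\zeta\otimes\zeta)(\Delta(A))$, and likewise for $B$ and $C$. (Conceptually, $\zeta$ is the enveloping-algebra homomorphism induced by the Lie algebra map $\sigma\circ\phi\colon \mathcal K_\omega\to\U$ appearing in the proof of Theorem~\ref{thm:KtoU}, and such a map always respects the primitive coproduct.)

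With this in hand, recall from Section~\ref{s:Hopf} that each $L_n$ becomes a $\K$-module by pulling back the $\U$-module $L_n$ along $\zeta$. Hence the $\K\otimes\K$-module $L_m\otimes L_n$ is the pullback along $\zeta\otimes\zeta$ of the $\U\otimes\U$-module $L_m\otimes L_n$, and the $\K$-module $L_m\otimes L_n$ is obtained from the latter by a further pullback along $\Delta$. Applying $(\zeta\otimes\zeta)\circ\Delta=\Delta\circ\zeta$, this composite pullback equals the pullback along $\zeta$ of the $\U$-module $L_m\otimes L_n$. Now Theorem~\ref{thm:CGrule_U} furnishes a $\U$-module isomorphism $f\colon L_m\otimes L_n\to\bigoplus_{p=0}^{\min\{m,n\}}L_{m+n-2p}$; since $f$ commutes with the action of $\zeta(x)$ for every $x\in\K$, it is automatically an isomorphism of the associated pulled-back $\K$-modules, and the target, pulled back along $\zeta$, is by definition $\bigoplus_{p=0}^{\min\{m,n\}}L_{m+n-2p}$ as $\K$-modules. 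This yields the assertion.

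I do not expect a genuine obstacle: the only point requiring computation is the comultiplication-compatibility of $\zeta$, which is a short check on the three generators, and everything else is formal bookkeeping about pullback of modules along a bialgebra homomorphism. If one wished to avoid even mentioning $\zeta$, an alternative is to repeat the $\U$-proof directly inside $\K$ using Lemmas~\ref{lem:Vn_K} and~\ref{lem2:Vn_K} to locate highest-weight-type vectors of the tensor module, but that is strictly more laborious and, for $\omega^2=1$, would require a separate degeneracy analysis; the pullback route handles all cases uniformly.
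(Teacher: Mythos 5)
Your argument is correct and is essentially the paper's own proof: the identity $(\zeta\otimes\zeta)\circ\Delta=\Delta\circ\zeta$, which you verify on the primitive generators $A$, $B$, $C$, is precisely the commuting square the paper invokes (via Lemmas~\ref{lem:Hopf_U} and~\ref{lem:Hopf_K} and Theorem~\ref{thm:KtoU}), after which the decomposition is transported from Theorem~\ref{thm:CGrule_U} by pullback along $\zeta$. Your additional remark that only the existence of $\zeta$, not its invertibility, is needed is a fair observation but does not change the route.
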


\begin{proof}
By Lemmas \ref{lem:Hopf_U} and \ref{lem:Hopf_K} along with Theorem~\ref{thm:KtoU} the following diagram commutes:\vspace{-1mm}
\[
\begin{tikzpicture}
\matrix(m)[matrix of math nodes,
row sep=4em, column sep=4em,
text height=1.5ex, text depth=0.25ex]
{
\K
&\U\\
\K\otimes \K
&\U\otimes \U\\
};
\path[->,font=\scriptsize,>=angle 90]
(m-1-1) edge node[left] {$\Delta$} (m-2-1)
(m-1-1) edge node[above] {$\zeta$} (m-1-2)
(m-2-1) edge node[below] {$\zeta\otimes \zeta$} (m-2-2)
(m-1-2) edge node[right] {$\Delta$} (m-2-2);
\end{tikzpicture}
\]
 Here
$\Delta\colon \U\to \U\otimes \U$ is the comultiplication of $\U$ from Lemma~\ref{lem:Hopf_U} and
$\Delta\colon \K\to \K\otimes \K$ is the comultiplication of $\K$ from Lemma~\ref{lem:Hopf_K}(ii). Combined with Theorem~\ref{thm:CGrule_U}, the result follows.
\end{proof}

For the rest of this paper, the notation $\Delta$ will refer to the map from Lemma~\ref{lem:Hopf_K}(ii) and $\Delta_n$ will stand for the corresponding $n$-fold comultiplication of $\K$ for every positive integer $n$.

\section[The Clebsch--Gordan rule for U(sl\_2) and the Hamming graph H(D,q)]{The Clebsch--Gordan rule for $\boldsymbol{\U}$\\ and the Hamming graph $\boldsymbol{H(D,q)}$}\label{s:Hamming}

\subsection{Preliminaries on distance-regular graphs}

Let $\Gamma$ denote a finite simple connected graph with vertex set $X\not=\varnothing$. Let $\partial$ denote the path-length distance function for $\Gamma$. Recall that the {\it diameter} $D$ of $\Gamma$ is defined by
\[
D=\max_{x,y\in X}\partial(x,y).
\]
Given any $x\in X$ let
\[
\Gamma_i(x)=\{y\in X\mid \partial(x,y)=i\}\qquad
\text{for}\ i=0,1,\dots,D.
\]
For short, we abbreviate $\Gamma(x)=\Gamma_1(x)$. We call $\Gamma$ {\it distance-regular} whenever for all $h,i,j\in\{0,1,\dots,D\}$ and all $x,y\in X$ with $\partial(x,y)=h$ the number
$|\Gamma_i(x)\cap \Gamma_j(y)|$
is independent of $x$ and $y$. If $\Gamma$ is distance-regular, the numbers $a_i$, $b_i$, $c_i$ for all $i=0,1,\dots,D$ defined by
\[
a_i = |\Gamma_i(x) \cap \Gamma(y)|,\qquad
b_i = |\Gamma_{i+1}(x) \cap \Gamma(y)|,\qquad
c_i = |\Gamma_{i-1}(x) \cap \Gamma(y)|
\]
for any $x,y\in X$ with $\partial(x,y)=i$ are called the {\it intersection numbers} of $\Gamma$. Here $\Gamma_{-1}(x)$ and $\Gamma_{D+1}(x)$ are interpreted as the empty set.

We now assume that $\Gamma$ is distance-regular.
Let $\Mat_X(\C)$ be the algebra consisting of the complex square matrices indexed by $X$.
For all $i=0,1,\dots,D$ the {\it $i^{\rm th}$ distance matrix} $\A_i\in \Mat_X(\C)$ is defined by
\[
(\A_i)_{xy}
=
\begin{cases}
1
&\text{if}\ \partial(x,y)=i,
\\
0
&\text{if}\ \partial(x,y)\not=i
\end{cases}
\]
for all $x,y\in X$. The {\it Bose--Mesner algebra} $\M$ of $\Gamma$ is the subalgebra of $\Mat_X(\C)$ generated by~$\A_i$ for all $i=0,1,\dots,D$. Note that the adjacency matrix $\A=\A_1$ of $\Gamma$ generates $\M$ and the matrices $\{\A_i\}_{i=0}^D$ form a basis for $\M$.

Since $\A$ is real symmetric and $\dim \M=D+1$, it follows that $\A$ has $D+1$ mutually distinct real eigenvalues $\theta_0,\theta_1,\dots,\theta_D$. Set $\theta_0=b_0$ which is the valency of $\Gamma$.
There exist unique $\E_0,\E_1,\dots,\E_D\in \M$ such that
\begin{gather*}
\sum_{i=0}^D \E_i=\I\qquad \text{(the identity matrix)},
\qquad
\A \E_i=\theta_i \E_i\qquad
\text{for all}\ i=0,1,\dots,D.
\end{gather*}
The matrices $\{\E_i\}_{i=0}^D$ form another basis for $\M$, and $\E_i$ is called the {\it primitive idempotent} of $\Gamma$ associated with $\theta_i$ for $i=0,1,\dots,D$.

Observe that $\M$ is closed under the Hadamard product $\odot$. The distance-regular graph $\Gamma$ is said to be {\it $Q$-polynomial} with respect to the ordering $\{\E_i\}_{i=0}^D$ if there are scalars $a_i^*$, $b_i^*$, $c_i^*$ for all $i=0,1,\dots,D$ such that $b_D^*=c_0^*=0$, $b_{i-1}^*c_i^*\not=0$ for all $i=1,2,\dots,D$ and{\samepage
\[
\E_1\odot \E_i=\frac{1}{|X|}
(b_{i-1}^* \E_{i-1}+a_i^* \E_i+ c_{i+1}^* \E_{i+1})\qquad
\text{for all}\ i=0,1,\dots,D,
\]
where we interpret $b_{-1}^*, c_{D+1}^*$ as any scalars in $\C$ and $\E_{-1},\E_{D+1}$ as the zero matrix in $\Mat_X(\C)$.}

We now assume that $\Gamma$ is $Q$-polynomial with respect to $\{\E_i\}_{i=0}^D$ and fix $x\in X$. For all $i=0,1,\dots,D$ let $\E_i^*=\E_i^*(x)$ denote the diagonal matrix in $\Mat_X(\C)$ defined by
\begin{gather}\label{e:Ei*}
(\E_i^*)_{yy}=
\begin{cases}
1 &\text{if}\ \partial(x,y)=i,
\\
0 &\text{if}\ \partial(x,y)\not=i
\end{cases}
\end{gather}
for all $y\in X$. The matrix $\E_i^*$ is called the {\it $i^{\rm th}$ dual primitive idempotent} of $\Gamma$ with respect to~$x$. The {\it dual Bose--Mesner algebra} $\M^*=\M^*(x)$ of $\Gamma$ with respect to $x$ is the subalgebra of $\Mat_X(\C)$ generated by $\E_i^*$ for all $i=0,1,\dots,D$. Since $\E_i^*\E_j^*=\delta_{ij} \E_i^*$ the matrices $\{\E_i^*\}_{i=0}^D$ form a basis for $\M^*$. For all $i=0,1,\dots,D$ the {\it $i^{\rm th}$ dual distance matrix} $\A_i^*=\A_i^*(x)$ is the diagonal matrix in $\Mat_X(\C)$ defined by
\begin{gather}\label{e:Ai*}
(\A_i^*)_{yy}=|X| (\E_i)_{xy}\qquad
\text{for all}\ y\in X.
\end{gather}
The matrices $\{\A_i^*\}_{i=0}^D$ form another basis for $\M^*$. Note that $\A^*=\A_1^*$ is called the {\it dual adjacency matrix} of $\Gamma$ with respect to $x$ and $\A^*$ generates $\M^*$ \cite[Lemma~3.11]{TerAlgebraI}.

The {\it Terwilliger algebra} $\T$ of $\Gamma$ with respect to $x$ is the subalgebra of $\Mat_X(\C)$ generated by $\M$ and $\M^*$ \cite[Definition 3.3]{TerAlgebraI}.
The vector space consisting of all complex column vectors indexed by $X$ is a natural $\T$-module and it is called the {\it standard $\T$-module} \cite[p.~368]{TerAlgebraI}.
Since the algebra $\T$ is finite-dimensional, the irreducible $\T$-modules are finite-dimensional. Since the algebra $\T$ is closed under the conjugate-transpose map, it follows that $\T$ is semisimple.
Hence the algebra $\T$ is isomorphic to
\[
\bigoplus_{\text{irreducible $\T$-modules $W$}} \End(W),
\]
where the direct sum is over all non-isomorphic irreducible $\T$-modules $W$.
Since the standard $\T$-module is faithful, all irreducible $\T$-modules are contained in the standard $\T$-module up to isomorphism.

Let $W$ denote an irreducible $\T$-module.
The number $\min_{0\leq i\leq D}\{i\mid \E_i^* W\not=\{0\}\}$ is called the {\it endpoint} of $W$.
The number
$\min_{0\leq i\leq D}\{i\mid \E_i W\not=\{0\}\}$ is called the {\it dual endpoint} of $W$.
The {\it support} of $W$ is defined as the set $\{i\mid 0\leq i\leq D,\, \E_i^* W\not=\{0\}\}$. The {\it dual support} of $W$ is defined as the set $\{i\mid 0\leq i\leq D,\, \E_i W\not=\{0\}\}$.
The number
$|\{i\mid 0\leq i\leq D,\, \E_i^* W\not=\{0\}\}|-1$
is called the {\it diameter} of $W$. The number
$|\{i\mid 0\leq i\leq D,\, \E_i W\not=\{0\}\}|-1$
is called the {\it dual diameter} of $W$.

\subsection[The adjacency matrix and the dual adjacency matrix of a Hamming grap]{The adjacency matrix and the dual adjacency matrix \\of a Hamming graph}
\label{s:A&A*_Hamming}

Let $X$ be a nonempty set and let $n$ be a positive integer.
The notation
\[
X^n=\{(x_1,x_2,\dots,x_n)\mid x_1,x_2,\dots,x_n\in X\}
\]
stands for the $n$-ary Cartesian product of $X$.
For any $x\in X^n$, let $x_i$ denote the $i^{\rm th}$ coordinate of $x$ for all $i=1,2,\dots,n$.

Recall that $q$ stands for an integer greater than or equal to $3$. For the rest of this paper, we set
\[
X=\{0,1,\dots,q-1\}
\]
and let $D$ be a positive integer.

\begin{Definition}
The {\it $D$-dimensional Hamming graph $H(D)=H(D,q)$ over $X$} has the vertex set $X^D$ and $x,y\in X^D$ are adjacent if and only if $x$ and $y$ differ in exactly one coordinate.
\end{Definition}

Let $\partial$ denote the path-length distance function for $H(D)$. Observe that $\partial(x,y)=|\{i\mid 1\leq i\leq D,\, x_i\not=y_i\}|$ for any $x,y\in X^D$.
It is routine to verify that $H(D)$ is a distance-regular graph with diameter $D$ and its intersection numbers are
\begin{gather*}
a_i=i(q-2),\qquad
b_i=(D-i)(q-1),\qquad
c_i=i
\end{gather*}
for all $i=0,1,\dots,D$.

Let $V(D)$ denote the vector space consisting of the complex column vectors indexed by $X^D$. For convenience we write $V = V (1)$. For any $x\in X^D$, let $\hat{x}$ denote the vector of $V(D)$ with $1$ in the $x$-coordinate and $0$ elsewhere. We view any $L\in \Mat_{X^D}(\C)$ as the linear map $V(D)\to V(D)$ that sends $\hat{x}$ to $L\hat{x}$ for all $x\in X^D$. We identify the vector space $V (D)$ with $V ^{\otimes D}$ via the linear isomorphism $V (D) \to V^{\otimes D}$ given by
\[
\hat{x}\to \hat{x}_1 \otimes \hat{x}_2\otimes \cdots \otimes \hat{x}_D\qquad
\text{for all}\ x\in X^D.
\]

Let $\I(D)$ denote the identity matrix in $\Mat_{X^D}(\C)$ and let $\A(D)$ denote the adjacency matrix of $H(D)$. We simply write $\I=\I(1)$ and $\A=\A(1)$.

\begin{Lemma}\label{lem:A(D)}
Let $D\geq 2$ be an integer. Then
\begin{gather}\label{rec:A(D)}
\A(D)=\A(D-1)\otimes \I+\I(D-1)\otimes \A.
\end{gather}
\end{Lemma}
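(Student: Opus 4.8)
The plan is to prove the recursion \eqref{rec:A(D)} combinatorially, by identifying matrix entries indexed by vertices of $H(D)$ with those of the tensor-product matrix $\A(D-1)\otimes \I+\I(D-1)\otimes \A$ acting on $V(D-1)\otimes V$. First I would fix $x,y\in X^D$ and write $x=(x',x_D)$, $y=(y',y_D)$ with $x',y'\in X^{D-1}$ and $x_D,y_D\in X$, so that under the identification $V(D)=V(D-1)\otimes V$ the basis vector $\hat x$ corresponds to $\hat{x'}\otimes \hat{x}_D$. Then the $(x,y)$-entry of the right-hand side is
\begin{gather*}
\big(\A(D-1)\otimes \I+\I(D-1)\otimes \A\big)_{xy}
=\A(D-1)_{x'y'}\,\I_{x_Dy_D}+\I(D-1)_{x'y'}\,\A_{x_Dy_D}.
\end{gather*}

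Next I would evaluate this using the description of adjacency in $H(D)$, namely that $\A(D)_{xy}=1$ iff $x$ and $y$ differ in exactly one coordinate. The key observation is that $x,y$ differ in exactly one coordinate precisely when either (i) $x',y'$ differ in exactly one coordinate and $x_D=y_D$, or (ii) $x'=y'$ and $x_D\ne y_D$; moreover these two cases are mutually exclusive. Case (i) contributes $\A(D-1)_{x'y'}=1$ and $\I_{x_Dy_D}=1$ while the other summand vanishes (since $x'\ne y'$); case (ii) contributes $\I(D-1)_{x'y'}=1$ and $\A_{x_Dy_D}=1$ while the first summand vanishes; and if $x,y$ agree or differ in two or more coordinates then both summands are $0$. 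Hence in every case the right-hand entry equals $\A(D)_{xy}$, which establishes \eqref{rec:A(D)}.

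I do not expect any real obstacle here: the argument is a direct unpacking of definitions, the only point requiring a word of care being the exhaustive and mutually exclusive case split on where $x$ and $y$ can differ, together with the bookkeeping that a vanishing $\I(D-1)_{x'y'}$ or $\I_{x_Dy_D}$ kills the unwanted summand. One may phrase the whole thing symmetrically: an edge of $H(D)$ either lies "inside the first $D-1$ coordinates" (an edge of $H(D-1)$ with the last coordinate held fixed) or "in the last coordinate" (an edge of $H(1)$ with the first $D-1$ coordinates held fixed), which is exactly the content of the Cartesian-product decomposition $H(D)=H(D-1)\,\square\,H(1)$ at the level of adjacency matrices.
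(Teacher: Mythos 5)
Your proof is correct and is essentially the paper's argument: both verify the identity directly from the definition of adjacency in $H(D)$ under the identification $V(D)=V(D-1)\otimes V$, the paper by computing the action of the right-hand side on each basis vector $\hat x$ (obtaining the sum of neighbors of $x$), you by comparing $(x,y)$-entries with the case split on which coordinate differs. The difference is purely organizational (columns versus entries), so no further comment is needed.
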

\begin{proof}
Let $x\in X^D$ be given. Applying $\hat x$ to the right-hand side of \eqref{rec:A(D)} a straightforward calculation yields that it is equal to
\begin{align*}
\sum_{i=1}^{D}
\sum_{y_i\in X\setminus\{x_i\}}
\hat x_1\otimes \cdots \otimes \hat x_{i-1}
\otimes \hat y_i\otimes \hat x_{i+1}
\otimes \cdots \otimes \hat x_D=\A(D)\hat x.
\end{align*}
The lemma follows.
\end{proof}

Using Lemma~\ref{lem:A(D)}, a routine induction yields that $\A(D)$ has the eigenvalues
\[
\theta_i(D)=D(q-1)-qi\qquad
\text{for all} \ i=0,1,\dots,D.
\]
Let $\E_i(D)$ denote the primitive idempotent of $H(D)$ associated with $\theta_i(D)$ for all $i=0,1,\dots,D$. We simply write $\E_0=\E_0(1)$ and $\E_1=\E_1(1)$. For convenience, we interpret $\E_{-1}(D)$ and $\E_{D+1}(D)$ as the zero matrix in $\Mat_{X^D}(\C)$.

\begin{Lemma}\label{lem:Ei(D)}
Let $D\geq 2$ be an integer. Then
\begin{gather}\label{Ei(D)}
\E_i(D)=\E_i(D-1)\otimes \E_0+\E_{i-1}(D-1)\otimes \E_1
\qquad
\text{for all}\ i=0,1,\dots,D.
\end{gather}
\end{Lemma}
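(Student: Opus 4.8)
The plan is to prove the decomposition of $\E_i(D)$ by induction on $D$, using the analogous recursion for $\A(D)$ from Lemma~\ref{lem:A(D)} together with the fact that the primitive idempotents are polynomials in the adjacency matrix. Since the matrices on the right-hand side of \eqref{Ei(D)} all lie in the (commutative) Bose--Mesner algebra of $H(D-1)$ tensored with that of $H(1)$, the key is to verify two things: that the proposed matrices sum to the identity over $i=0,1,\dots,D$, and that each one is the $\theta_i(D)$-eigenspace projection for $\A(D)$.

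First I would record that $\A$ acts on $V = V(1)$ with the two eigenvalues $\theta_0(1) = q-1$ and $\theta_1(1) = -1$, with primitive idempotents $\E_0$ and $\E_1$ satisfying $\E_0 + \E_1 = \I$ and $\A = (q-1)\E_0 - \E_1$; this is the $D=1$ base case, and it is immediate from the definitions. For the inductive step, assume \eqref{Ei(D)} holds with $D$ replaced by $D-1$ in the sense that $\{\E_j(D-1)\}_{j=0}^{D-1}$ are the primitive idempotents of $H(D-1)$. Define $F_i = \E_i(D-1)\otimes \E_0 + \E_{i-1}(D-1)\otimes \E_1$ for $i = 0,1,\dots,D$ (with the end conventions $\E_{-1}(D-1) = \E_D(D-1) = 0$). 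Then I would check:

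\textbf{(a) Orthogonal idempotents summing to the identity.} Using $\E_j(D-1)\E_k(D-1) = \delta_{jk}\E_j(D-1)$ and $\E_0\E_1 = 0$, $\E_0^2 = \E_0$, $\E_1^2 = \E_1$, a direct multiplication gives $F_i F_{i'} = \delta_{ii'}F_i$. Summing over $i$ and using $\sum_{j=0}^{D-1}\E_j(D-1) = \I(D-1)$ and $\E_0 + \E_1 = \I$ yields $\sum_{i=0}^{D}F_i = \I(D-1)\otimes\I = \I(D)$.

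\textbf{(b) Eigenvalue condition.} Apply $\A(D) = \A(D-1)\otimes\I + \I(D-1)\otimes\A$ from Lemma~\ref{lem:A(D)} to $F_i$. On the first summand of $F_i$ one gets $\theta_i(D-1)\E_i(D-1)\otimes\E_0 + \E_i(D-1)\otimes(q-1)\E_0 = \bigl(\theta_i(D-1) + (q-1)\bigr)\,\E_i(D-1)\otimes\E_0$, and on the second summand $\bigl(\theta_{i-1}(D-1) + (-1)\bigr)\,\E_{i-1}(D-1)\otimes\E_1$. Since $\theta_i(D-1) + (q-1) = (D-1)(q-1) - qi + (q-1) = D(q-1) - qi = \theta_i(D)$ and likewise $\theta_{i-1}(D-1) - 1 = (D-1)(q-1) - q(i-1) - 1 = D(q-1) - qi = \theta_i(D)$, both summands are scaled by the same factor $\theta_i(D)$, so $\A(D)F_i = \theta_i(D)F_i$. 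Combined with (a) and the defining characterization of primitive idempotents ($\sum_i \E_i(D) = \I(D)$, $\A(D)\E_i(D) = \theta_i(D)\E_i(D)$, uniqueness), this forces $F_i = \E_i(D)$, completing the induction.

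There is no serious obstacle here; the proof is a routine induction, and the only point requiring a moment's care is the bookkeeping at the ends of the range ($i=0$ and $i=D$), where the conventions $\E_{-1}(D-1) = \E_D(D-1) = 0$ ensure the formula still produces the correct one-term expressions for $\E_0(D)$ and $\E_D(D)$. I would state the argument compactly, emphasizing the two eigenvalue shifts $\theta_i(D-1) + (q-1) = \theta_i(D)$ and $\theta_{i-1}(D-1) - 1 = \theta_i(D)$ as the arithmetic heart of the matter, and leave the elementary idempotent multiplications to the reader.
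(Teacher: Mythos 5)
Your proposal is correct and follows essentially the same route as the paper: induction on $D$, defining the right-hand side as candidate matrices, verifying via Lemma~\ref{lem:A(D)} that they sum to $\I(D)$ and satisfy $\A(D)F_i=\theta_i(D)F_i$, and invoking the uniqueness of the primitive idempotents. Your extra check of orthogonal idempotency and the explicit eigenvalue arithmetic $\theta_i(D-1)+(q-1)=\theta_{i-1}(D-1)-1=\theta_i(D)$ only spell out details the paper leaves to the reader.
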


\begin{proof}
We proceed by induction on $D$.
Let $\E_i(D)'$ denote the right-hand side of \eqref{Ei(D)} for $i=0,1,\dots,D$.
Applying Lemma~\ref{lem:A(D)} along with the induction hypothesis, it follows that
\begin{gather*}
\sum_{i=0}^D \E_i(D)' =\I(D),
\qquad
\A(D) \E_i(D)'=\theta_i(D)\E_i(D)'
\qquad \text{for all}\ i=0,1,\dots,D.
\end{gather*}
Hence $\E_i(D)=\E_i(D)'$ for all $i=0,1,\dots,D$. The lemma follows.
\end{proof}

Applying Lemma~\ref{lem:Ei(D)} yields that
\[
\E_1(D)\odot \E_i(D)=
q^{-D}(b_{i-1}^* \E_{i-1}(D)+a_i^* \E_i(D)+c_{i+1}^* \E_{i+1}(D))
\]
for all $i=0,1,\dots,D$, where
\[
a_i^*=i(q-2),\qquad
b_i^*=(D-i)(q-1),\qquad
c_i^*=i
\]
for all $i=0,1,\dots,D$. Here $b_{-1}^*$, $c_{D+1}^*$ are interpreted as any scalars in $\C$.
Hence $H(D)$ is $Q$-polynomial with respect to the ordering $\{\E_i(D)\}_{i=0}^D$.

Observe that the graph $H(D)$ is vertex-transitive. Without loss of generality, we can consider the dual adjacency matrix $\A^*(D)$ of $H(D)$ with respect to $(0,0,\dots,0)\in X^D$. We simply write $\A^*=\A^*(1)$.

\begin{Lemma}\label{lem:A*(D)}
Let $D\geq 2$ be an integer. Then
\begin{gather*}
\A^*(D)=\A^*(D-1)\otimes \I+\I(D-1)\otimes \A^*.
\end{gather*}
\end{Lemma}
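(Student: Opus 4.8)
The plan is to compare the diagonal entries of the two sides directly, exploiting the additivity of the Hamming distance over coordinates together with the fixed identification $V(D)=V(D-1)\otimes V$, exactly as in the proof of Lemma~\ref{lem:A(D)}.

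First I would observe that $\A^*(D-1)\otimes\I$ and $\I(D-1)\otimes\A^*$ are both diagonal matrices in $\Mat_{X^D}(\C)$, and so is $\A^*(D)$; hence it suffices to check that the $(y,y)$-entries of the two sides agree for every $y\in X^D$. Fix such a $y$ and write $y=(y',y_D)$ with $y'=(y_1,\dots,y_{D-1})\in X^{D-1}$ and $y_D\in X$, so that $\hat y=\hat y'\otimes\hat y_D$ under the identification $V(D)=V(D-1)\otimes V$. Since all four matrices involved are diagonal, the $(y,y)$-entry of $\A^*(D-1)\otimes\I+\I(D-1)\otimes\A^*$ equals $(\A^*(D-1))_{y'y'}+(\A^*)_{y_Dy_D}$.

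Next I would compute the left-hand entry. The dual adjacency matrix is taken with respect to $(0,0,\dots,0)\in X^D$, whose first $D-1$ coordinates form the base vertex $(0,\dots,0)\in X^{D-1}$ used for $H(D-1)$; because the Hamming distance counts the coordinates in which two vertices differ, $\partial\big((0,\dots,0),y\big)=\partial\big((0,\dots,0),y'\big)+\partial(0,y_D)$, the first distance computed in $H(D-1)$ and the second in $H(1)$. Using $(\A^*(D))_{yy}=D(q-1)-q\,\partial\big((0,\dots,0),y\big)$ and the splitting $D(q-1)=(D-1)(q-1)+(q-1)$, this gives $(\A^*(D))_{yy}=(\A^*(D-1))_{y'y'}+(\A^*)_{y_Dy_D}$, which matches the expression obtained above. (Alternatively, the same identity can be extracted from the recursion for $\E_i(D)$ in Lemma~\ref{lem:Ei(D)} together with the fact that $(\A^*(D))_{yy}$ is $q^D$ times the $\big((0,\dots,0),y\big)$-entry of $\E_1(D)$, but the distance computation is the most direct route.)

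There is no real obstacle here; the only points requiring care are to keep the tensor-factor bookkeeping consistent with the identification $V(D)=V(D-1)\otimes V$ fixed earlier, and to record explicitly that the chosen base vertex of $H(D)$ restricts to the chosen base vertex of $H(D-1)$, which is precisely what makes the additivity of $\partial$ available.
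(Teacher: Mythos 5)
Your computation is fine as arithmetic, but it rests on the identity $(\A^*(D))_{yy}=D(q-1)-q\,\partial\big((0,\dots,0),y\big)$, and inside Section~\ref{s:Hamming} that identity is not yet available. In the section where this lemma lives, $\A^*(D)=\A_1^*(D)$ is the $Q$-polynomial dual adjacency matrix defined through the primitive idempotent via \eqref{e:Ai*}, i.e., $(\A^*(D))_{yy}=q^D(\E_1(D))_{0y}$; the statement that these diagonal entries equal $D(q-1)-q\,\partial\big((0,\dots,0),y\big)$ (equivalently, that $\A^*(D)$ acts on $\E_i^*(D)V(D)$ as $\theta_i^*(D)=D(q-1)-qi$) is only established \emph{after} this lemma, by the induction that uses Lemmas~\ref{lem:A*(D)} and~\ref{lem:Ei*(D)}. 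The formula does appear in the Introduction as a working definition, but reconciling it with the $\E_1(D)$-based definition amounts to computing the entries of $\E_1(D)$ (the Krawtchouk dual eigenvalue computation), which is essentially the content being proved. So, taken as a proof of the lemma about the object defined by \eqref{e:Ai*}, your main route is circular unless you first prove the diagonal-entry formula independently.

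The paper avoids this by working directly from \eqref{e:Ai*}: writing $\A^*(D)\hat y=q^Dc_y\hat y$ with $c_y$ the coefficient of $\hat y$ in $\E_1(D)\hat 0^{\otimes D}$, and then using the tensor recursion for $\E_i(D)$ from Lemma~\ref{lem:Ei(D)} to get $c_y=q^{-1}c_{(y_1,\dots,y_{D-1})}+q^{1-D}c_{y_D}$, which yields the claimed splitting. Your parenthetical remark is exactly this argument, so the fix is simply to promote it from an aside to the actual proof (or, alternatively, to first prove $q^D(\E_1(D))_{0y}=D(q-1)-q\,\partial\big((0,\dots,0),y\big)$, after which your distance-additivity computation is a correct and pleasantly transparent finish).
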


\begin{proof}
Given $y\in X^D$ let $c_y$ denote the coefficient of $\hat y$ in $\E_1(D) \cdot \hat 0^{\otimes D}$ with respect to the basis $\{\hat x\}_{x\in X^D}$ for $V(D)$. By \eqref{e:Ai*}, we have
\[
\A^*(D)\hat y=q^D c_y \hat y\qquad
\text{for all}\ y\in X^D.
\]

Suppose that $D\geq 2$.
Using Lemma~\ref{lem:Ei(D)} yields that
$c_y=q^{-1} c_{(y_1,\dots,y_{D-1})}+q^{1-D} c_{y_D}$ for all $y\in X^D$.
Hence
\begin{align*}
\A^*(D)\hat y
&=\big(q^{D-1} c_{(y_1,\dots,y_{D-1})}+q c_{y_D}\big) \hat y
\\
&=\A^*(D-1) (\hat y_1\otimes \cdots \otimes \hat y_{D-1})\otimes \hat y_D+\hat y_1\otimes \cdots \otimes \hat y_{D-1}\otimes \A^* \hat y_D
\\
&=(\A^*(D-1)\otimes \I+\I(D-1)\otimes \A^*) \hat y
\end{align*}
for all $y\in X^D$. The lemma follows.
\end{proof}

Let $\E_i^*(D)$ denote the $i^{{\rm th}}$ dual primitive idempotent of $H(D)$ with respect to ${(0,0,\dots,0)\!\in\! X^D}$ for all $i=0,1,\dots,D$. We simply write $\E^*_0=\E^*_0(1)$ and $\E^*_1=\E^*_1(1)$. For convenience, we interpret $\E_{-1}^*(D)$ and $\E_{D+1}^*(D)$ as the zero matrix in $\Mat_{X^D}(\C)$.

\begin{Lemma}\label{lem:Ei*(D)}
Let $D\geq 2$ be an integer. Then
\begin{gather*}
\E_i^*(D)=\E_i^*(D-1)\otimes \E^*_0+\E_{i-1}^*(D-1)\otimes \E_1^*
\qquad
\text{for all}\ i=0,1,\dots,D.
\end{gather*}
\end{Lemma}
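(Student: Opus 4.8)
The plan is to prove Lemma~\ref{lem:Ei*(D)} by mimicking the structure of the proof of Lemma~\ref{lem:Ei(D)}, using the tensor-product recursion for $\A^*(D)$ from Lemma~\ref{lem:A*(D)} in place of the one for $\A(D)$ from Lemma~\ref{lem:A(D)}. First I would record the eigenvalues of $\A^*(D)$: by the definition $\A^*(D)_{yy}=D(q-1)-q\cdot\partial(\hat 0,y)$ (equivalently, by iterating Lemma~\ref{lem:A*(D)} and using that $\A^*=\A^*(1)$ has eigenvalues $\pm(q-1)$ corresponding to the eigenspaces $\E_0^* V$ on $\hat 0$ and $\E_1^* V$ on the other coordinates), one finds that $\A^*(D)$ has eigenvalues $\theta_i^*(D)=D(q-1)-qi$ for $i=0,1,\dots,D$, and that the $i$th dual primitive idempotent $\E_i^*(D)$ is precisely the projection onto the $\theta_i^*(D)$-eigenspace. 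This mirrors the situation on the primary side.

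Next, I would let $\E_i^*(D)'$ denote the right-hand side of the claimed identity, namely $\E_i^*(D-1)\otimes\E_0^*+\E_{i-1}^*(D-1)\otimes\E_1^*$, for $i=0,1,\dots,D$, and verify the two defining properties of the family of dual primitive idempotents: that they sum to $\I(D)$ and that $\A^*(D)\E_i^*(D)'=\theta_i^*(D)\E_i^*(D)'$. For the sum, using the induction hypothesis $\sum_{j=0}^{D-1}\E_j^*(D-1)=\I(D-1)$ together with $\E_0^*+\E_1^*=\I$, a telescoping computation gives $\sum_{i=0}^D\E_i^*(D)'=\I(D-1)\otimes\E_0^*+\I(D-1)\otimes\E_1^*=\I(D-1)\otimes\I=\I(D)$ (here $\E_{-1}^*(D-1)$ and $\E_D^*(D-1)$ are interpreted as zero). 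For the eigenvalue property, I would apply Lemma~\ref{lem:A*(D)} and the induction hypothesis $\A^*(D-1)\E_j^*(D-1)=\theta_j^*(D-1)\E_j^*(D-1)$, using the fact that $\A^*$ acts as $(q-1)$ on $\E_0^* V$ and as $-(q-1)$ on $\E_1^* V$; one then checks that $\theta_i^*(D-1)+(q-1)=\theta_i^*(D)$ and $\theta_{i-1}^*(D-1)-(q-1)=\theta_i^*(D)$, so that $\A^*(D)$ acts as the scalar $\theta_i^*(D)$ on both summands of $\E_i^*(D)'$, hence on $\E_i^*(D)'$ itself.

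Since the dual primitive idempotents are the unique elements of $\M^*(D)$ with these two properties (equivalently, the spectral projections of $\A^*(D)$, which has $D+1$ distinct eigenvalues), it follows that $\E_i^*(D)=\E_i^*(D)'$ for all $i$, completing the induction. The one point requiring a little care — the ``main obstacle,'' though it is minor — is making precise how $\A^*=\A^*(1)$ acts on the one-dimensional spaces $\E_0^* V$ and $\E_1^* V$; this is most cleanly handled by noting that $\A^*(1)$ is the diagonal matrix with $(q-1)$ in the $\hat 0$-entry and $-(q-1)$ in every other entry, which is exactly $(q-1)\E_0^*-(q-1)\E_1^*$, and then propagating this through the tensor recursion. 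Everything else is the same bookkeeping as in Lemma~\ref{lem:Ei(D)}, with the roles of $\A(D)$, $\E_i(D)$, $\theta_i(D)$ replaced by $\A^*(D)$, $\E_i^*(D)$, $\theta_i^*(D)$.
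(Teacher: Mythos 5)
Your overall strategy---characterizing the dual primitive idempotents as the spectral projections of $\A^*(D)$ and running the same induction as in Lemma~\ref{lem:Ei(D)} via Lemma~\ref{lem:A*(D)}---is workable, but the key eigenvalue check contains a genuine error. The matrix $\A^*=\A^*(1)$ is not $(q-1)\E^*_0-(q-1)\E^*_1$: by \eqref{e:Ai*} its diagonal entry at a vertex $y$ is $q(\E_1)_{0y}=q\delta_{0y}-1$, i.e., $q-1$ at $\hat 0$ and $-1$ at every other vertex, so $\A^*=(q-1)\E^*_0-\E^*_1$ (equivalently, put $D=1$ in $D(q-1)-q\,\partial(\hat 0,y)$; compare also the matrix of $\A^*$ on $V_1$ displayed in Lemma~\ref{lem:r0&r1}(ii), whose diagonal is $q-1,\,-1$). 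Consequently the identity you verify for the second summand, $\theta_{i-1}^*(D-1)-(q-1)=\theta_i^*(D)$, is false: with $\theta_i^*(D)=D(q-1)-qi$ one computes $\theta_{i-1}^*(D-1)-(q-1)=\theta_i^*(D)-(q-2)\neq\theta_i^*(D)$ since $q\geq 3$. The same wrong value also breaks your parenthetical ``equivalently, by iterating Lemma~\ref{lem:A*(D)}'' remark: with eigenvalues $\pm(q-1)$ the iteration would give diagonal entries $(q-1)(D-2\,\partial(\hat 0,y))$ rather than $D(q-1)-q\,\partial(\hat 0,y)$. The repair is immediate: $\A^*$ acts as $-1$ on $\E_1^*V$, the correct identity is $\theta_{i-1}^*(D-1)-1=\theta_i^*(D)$, and with this correction $\A^*(D)$ acts as $\theta_i^*(D)$ on both summands of your $\E_i^*(D)'$, so the induction closes as intended.

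Once corrected, note that your route is considerably heavier than the paper's, which is a one-line direct check from \eqref{e:Ei*}: both sides of the claimed identity are diagonal $0$-$1$ matrices, and equality of the $(y,y)$ entries is just the observation that $\partial(\hat 0,y)$ equals $\partial(\hat 0,(y_1,\dots,y_{D-1}))$ plus $1$ or $0$ according as $y_D\neq 0$ or $y_D=0$. The spectral/inductive argument also leans on the explicit diagonal form of $\A^*(D)$, which in the Section~\ref{s:A&A*_Hamming} development is exactly what Lemmas~\ref{lem:A*(D)} and~\ref{lem:Ei*(D)} are used to establish, so the elementary verification keeps the logical order cleaner.
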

\begin{proof}
It is straightforward to verify the lemma by using \eqref{e:Ei*}.
\end{proof}

Using Lemmas \ref{lem:A*(D)} and \ref{lem:Ei*(D)}, a routine induction yields that $\A^*(D) \E_i^*(D)=\theta_i^*(D)\E_i^*(D)$ for all $i=0,1,\dots,D$ where
$\theta_i^*(D)=D(q-1)-qi$.

\subsection{Proofs of Proposition~\ref{prop:Kpk} and Theorems~\ref{thm:dec_T(D)module}, \ref{thm:T(D)irrmodule}}

In this subsection, we set
\[
\omega=1-\frac{2}{q}.
\]
Let $\T(D)$ denote the Terwilliger algebra of $H(D)$ with respect to $(0,0,\dots,0)\in X^D$.

\begin{Definition}\label{defn:V0&V1}
Let $V_0$ denote the subspace of $V$ consisting of all vectors $\sum_{i=1}^{q-1} c_i \hat i$, where $c_1,c_2,\dots,c_{q-1}\in \C$ with $\sum_{i=1}^{q-1} c_i=0$.
Let $V_1$ denote the subspace of $V$ spanned by
$\hat 0$ and $\sum_{i=1}^{q-1} \hat i$.
\end{Definition}

\begin{Definition}\label{defn:Vs(D)}
For any $s\in \{0,1\}^D$, we define the subspace $V_s(D)$ of $V(D)$ by
\[
V_s(D)=V_{s_1}\otimes V_{s_2}\otimes \cdots \otimes V_{s_D}.
\]
Note that $V_0(1)=V_0$ and $V_1(1)=V_1$.
\end{Definition}

\begin{Lemma}\label{lem:decV(D)_HD}
The vector space $V(D)$ is equal to
\[
\bigoplus_{s\in \{0,1\}^D} V_s(D).
\]
\end{Lemma}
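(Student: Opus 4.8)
The plan is to prove that $V(D)=\bigoplus_{s\in\{0,1\}^D}V_s(D)$ by reducing to the one-dimensional case $D=1$ and then tensoring. First I would establish the base case: the claim is that $V=V_0\oplus V_1$ where $V_0$ is the $(q-2)$-dimensional space of vectors $\sum_{i=1}^{q-1}c_i\hat i$ with $\sum c_i=0$ and $V_1$ is the $2$-dimensional span of $\hat 0$ and $\sum_{i=1}^{q-1}\hat i$. Since $\dim V_0=q-2$ and $\dim V_1=2$, we have $\dim V_0+\dim V_1=q=\dim V$, so it suffices to check $V_0\cap V_1=\{0\}$ (or equivalently that $V_0+V_1=V$). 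For the intersection: a vector in $V_1$ has all coordinates $\hat 1,\dots,\widehat{q-1}$ equal to each other and no constraint tying them to the $\hat 0$ coordinate beyond the span description; if such a vector also lies in $V_0$ it has zero $\hat 0$-coordinate and its remaining equal coordinates sum to zero, hence are all zero. Thus $V_0\cap V_1=\{0\}$ and $V=V_0\oplus V_1$.

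Next I would bootstrap to general $D$. Writing $V(D)=V^{\otimes D}$ under the identification fixed in the text, and using the base case $V=V_0\oplus V_1$ in each tensor factor, distributivity of tensor product over direct sum gives
\[
V(D)=V^{\otimes D}=(V_0\oplus V_1)^{\otimes D}=\bigoplus_{s\in\{0,1\}^D}V_{s_1}\otimes V_{s_2}\otimes\cdots\otimes V_{s_D}=\bigoplus_{s\in\{0,1\}^D}V_s(D),
\]
which is exactly the asserted decomposition, recalling Definition~\ref{defn:Vs(D)}. Alternatively, one can argue by induction on $D$: assuming $V(D-1)=\bigoplus_{s'\in\{0,1\}^{D-1}}V_{s'}(D-1)$, tensor both sides with $V=V_0\oplus V_1$ and use $V(D)=V(D-1)\otimes V$ together with the distributive law, regrouping the resulting index set $\{0,1\}^{D-1}\times\{0,1\}$ as $\{0,1\}^D$.

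I do not expect any genuine obstacle here; the only thing requiring a moment's care is the base case verification that the sum $V_0+V_1$ is direct, and even that is immediate once one computes dimensions and checks the intersection is trivial. The tensor-product step is a standard application of the distributive law $\bigl(\bigoplus_j U_j\bigr)\otimes\bigl(\bigoplus_k W_k\bigr)=\bigoplus_{j,k}U_j\otimes W_k$ applied $D$ times, so the whole argument is short and formal.
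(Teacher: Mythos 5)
Your proposal is correct and follows essentially the same route as the paper: the paper also reduces to $V=V_0\oplus V_1$ (which it takes directly from Definition~\ref{defn:V0&V1}) and then applies the distributive law of $\otimes$ over $\oplus$ to $(V_0\oplus V_1)^{\otimes D}$. Your explicit dimension-count and intersection check for the base case is just a slightly more detailed justification of what the paper treats as immediate.
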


\begin{proof}
By Definition \ref{defn:V0&V1}, we have $V=V_0\oplus V_1$. It follows that
\[
V(D)=V^{\otimes D}=(V_0\oplus V_1)^{\otimes D}.
\]
The lemma follows by applying the distributive law of $\otimes$ over $\oplus$ to the right-hand side of the above equation.
\end{proof}

\begin{Lemma}\label{lem:r0&r1}\quad
\begin{enumerate}\itemsep=0pt
\item[$(i)$] There exists a unique representation $r_0\colon \K\to \End(V_0)$ that sends
\begin{gather*}
A \mapsto \frac{1}{q}\A|_{V_0}+\frac{1}{q},\qquad
B \mapsto \frac{1}{q}\A^*|_{V_0}+\frac{1}{q}.
\end{gather*}
Moreover, the $\K$-module $V_0$ is isomorphic to $(q-2)\cdot L_0$.

\item[$(ii)$] There exists a unique representation $r_1\colon \K\to \End(V_1)$ that sends
\begin{gather*}
A \mapsto \frac{1}{q}\A|_{V_1}+\frac{1}{q}-\frac{1}{2},\qquad
B \mapsto \frac{1}{q}\A^*|_{V_1}+\frac{1}{q}-\frac{1}{2}.
\end{gather*}
Moreover, the $\K$-module $V_1$ is isomorphic to $L_1$.
\end{enumerate}
\end{Lemma}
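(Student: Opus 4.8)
Since $\K$ is generated by $A$ and $B$ (Definition~\ref{defn:K}), a representation of $\K$ on a vector space is completely determined by the images of $A$ and $B$; hence the uniqueness of $r_0$ and of $r_1$ is automatic, and what must be proved is existence, i.e., that the prescribed images of $A$ and $B$ satisfy the relations~\eqref{R1} and~\eqref{R2}. The plan is to avoid checking those relations by hand: instead I would write the prescribed images as explicit matrices in a convenient basis and recognize them as the matrices, displayed in Lemma~\ref{lem:Vn_K}, that already realize $L_0$ and $L_1$ as $\K$-modules. Recognizing them this way simultaneously yields existence of $r_0$, $r_1$ and the claimed module isomorphisms.

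For part~(i): recall that $H(1,q)$ is the complete graph $K_q$, so $\A=\A(1)=J-\I$ with $J$ the all-ones matrix, and $\A^*=\A^*(1)$ is the diagonal matrix with entry $q-1$ at the $\hat 0$-coordinate and entry $-1$ at every other coordinate. For $v\in V_0$ the defining condition $\sum_{i=1}^{q-1}c_i=0$ gives $Jv=0$, hence $\A|_{V_0}=-\,\mathrm{id}$; and since $V_0$ is spanned by vectors supported off the $0$-coordinate, $\A^*|_{V_0}=-\,\mathrm{id}$. Consequently the prescribed images are $\tfrac1q\A|_{V_0}+\tfrac1q=0$ and $\tfrac1q\A^*|_{V_0}+\tfrac1q=0$, and the zero operators trivially satisfy~\eqref{R1} and~\eqref{R2}, so $r_0$ exists. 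By Lemma~\ref{lem:Vn_K} at $n=0$ the module $L_0$ is one-dimensional with $A$ and $B$ acting as $0$, and since $\dim V_0=q-2$ with $A,B$ acting as $0$ on $V_0$, we conclude $V_0\cong(q-2)\cdot L_0$.

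For part~(ii): I would work in the ordered basis $\bigl\{\hat 0,\ \sum_{i=1}^{q-1}\hat i\bigr\}$ of $V_1$, in which a direct computation gives
\[
\A|_{V_1}=\begin{pmatrix}0&q-1\\1&q-2\end{pmatrix},\qquad
\A^*|_{V_1}=\begin{pmatrix}q-1&0\\0&-1\end{pmatrix}.
\]
Then the prescribed images $\tfrac1q\A|_{V_1}+\bigl(\tfrac1q-\tfrac12\bigr)\I$ and $\tfrac1q\A^*|_{V_1}+\bigl(\tfrac1q-\tfrac12\bigr)\I$ become explicit $2\times2$ matrices, and it remains to substitute $\omega=1-\tfrac2q$ into Lemma~\ref{lem:Vn_K} at $n=1$ (where $\alpha_0=-\tfrac\omega2$, $\alpha_1=\tfrac\omega2$, $\beta_0=\tfrac{1-\omega}2$, $\gamma_1=\tfrac{1+\omega}2$, $\theta_0=\tfrac12$, $\theta_1=-\tfrac12$) and observe that the two pairs of matrices coincide. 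Since those matrices realize $L_1$ as a $\K$-module, the relations~\eqref{R1} and~\eqref{R2} hold for the prescribed images, so $r_1$ exists, and $V_1\cong L_1$ as $\K$-modules.

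The only real work is bookkeeping: fixing the $K_q$-description of $\A$ and $\A^*$, keeping track of which coordinate carries the base vertex, and carrying out the $2\times2$ arithmetic so that the scalar shifts $\tfrac1q$ and $\tfrac1q-\tfrac12$ line up with the entries $\tfrac{q-1}q$, $\tfrac1q$, $\pm\tfrac{q-2}{2q}$ produced by $\omega=1-\tfrac2q$. No input beyond Lemma~\ref{lem:Vn_K} is needed.
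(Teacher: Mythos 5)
Your proposal is correct and follows essentially the same route as the paper: show $\A,\A^*$ act as $-1$ on $V_0$ (so the shifted operators vanish, giving $(q-2)\cdot L_0$), compute the $2\times 2$ matrices of $\A,\A^*$ on $V_1$ in the basis $\hat 0,\ \sum_{i=1}^{q-1}\hat i$, and match the shifted operators with the matrices of Lemma~\ref{lem:Vn_K} at $\omega=1-\tfrac2q$ to get $L_1$. The paper's proof is just a terser version of this same identification, so no further comment is needed.
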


\begin{proof}
(i) The subspace $V_0$ of $V$ is invariant under $\A$ and $\A^*$ acting as scalar multiplication by $-1$. By Lemma~\ref{lem:Vn_K}, the statement (i) follows.

(ii) The subspace $V_1$ of $V$ is invariant under $\A$ and $\A^*$ and the matrices representing~$\A$ and~$\A^*$ with respect to the basis $\hat 0$, $\sum_{i=1}^{q-1}\hat i$ for $V_1$ are
\[
\begin{pmatrix}
0 &q-1
\\
1 &q-2
\end{pmatrix}\!,\qquad
\begin{pmatrix}
q-1 &\hphantom{-}0
\\
0 &-1
\end{pmatrix}\!,
\]
respectively. By Lemma~\ref{lem:Vn_K}, the statement (ii) follows.
\end{proof}

\begin{Definition}\label{defn:rs(D)}
For any $s\in \{0,1\}^D$, we define the representation $r_s(D)\colon \K\to \End(V_s(D))$ by
\[
r_s(D)=(r_{s_1}\otimes r_{s_2}\otimes \cdots \otimes r_{s_D})\circ \Delta_{D-1}.
\]
Note that $r_0(1)=r_0$ and $r_1(1)=r_1$.
\end{Definition}

\begin{Proposition}\label{prop:rec_rs(D)}
For any integer $D\geq 2$ and any $s\in \{0,1\}^D$, the following diagram commutes:
\[
\begin{tikzpicture}
\matrix(m)[matrix of math nodes,
row sep=3em, column sep=3em,
text height=1.5ex, text depth=0.25ex]
{
\K
&\K\otimes \K\\
\End(V_s(D))
&\\
};
\path[->,font=\scriptsize,>=angle 90]
(m-1-1) edge node[left] {$r_s(D)$} (m-2-1)
(m-1-1) edge node[above] {$\Delta$} (m-1-2)
(m-1-2) edge[bend left] node[right] {$r_{(s_1,s_2,\dots,s_{D-1})}(D-1)\otimes r_{s_D}$} (m-2-1);
\end{tikzpicture}
\]
\end{Proposition}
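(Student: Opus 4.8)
The statement asserts that $r_s(D)$ factors through $\Delta$ via the tensor product $r_{(s_1,\dots,s_{D-1})}(D-1)\otimes r_{s_D}$. The plan is to unwind the definitions of the $D$-fold comultiplication and of the representations $r_s(D)$, and to reduce the claim to the coassociativity identity \eqref{Delta_n'} for the $n$-fold comultiplication of $\K$. Concretely, by Definition~\ref{defn:rs(D)} the map $r_s(D)$ equals $(r_{s_1}\otimes\cdots\otimes r_{s_D})\circ \Delta_{D-1}$, and similarly $r_{(s_1,\dots,s_{D-1})}(D-1)=(r_{s_1}\otimes\cdots\otimes r_{s_{D-1}})\circ \Delta_{D-2}$. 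So the right-hand path of the diagram, namely $\big(r_{(s_1,\dots,s_{D-1})}(D-1)\otimes r_{s_D}\big)\circ \Delta$, equals
\[
\big((r_{s_1}\otimes\cdots\otimes r_{s_{D-1}})\circ \Delta_{D-2}\big)\otimes r_{s_D}\big)\circ \Delta
=(r_{s_1}\otimes\cdots\otimes r_{s_D})\circ(\Delta_{D-2}\otimes 1)\circ \Delta .
\]

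**Key step.** The whole thing then comes down to the identity $\Delta_{D-1}=(\Delta_{D-2}\otimes 1)\circ \Delta$, which is exactly the first equality in \eqref{Delta_n'} with $n=D-1$. Once this is invoked, both composite maps from $\K$ to $\End(V_s(D))$ are seen to equal $(r_{s_1}\otimes\cdots\otimes r_{s_D})\circ \Delta_{D-1}$, which is $r_s(D)$ by definition, so the diagram commutes. One should also note, for the domains and codomains to match, that $V_s(D)=V_{(s_1,\dots,s_{D-1})}(D-1)\otimes V_{s_D}$ by Definition~\ref{defn:Vs(D)}, so that $\End(V_s(D))$ contains $\End(V_{(s_1,\dots,s_{D-1})}(D-1))\otimes \End(V_{s_D})$ in the natural way and the tensor-product representation lands in the correct space.

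**Main obstacle.** There is no real obstacle here; the proof is a bookkeeping exercise in the Hopf-algebraic formalism set up in Section~\ref{s:Hopf}. The only point requiring a modicum of care is the compatibility of the two different ways of iterating $\Delta$ — applying $\Delta$ in the last tensor slot at the outermost stage versus at the innermost stage — which is precisely what \eqref{Delta_n} and \eqref{Delta_n'} record, and which ultimately rests on the coassociativity axiom (H1) for $\K$ established in Lemma~\ref{lem:Hopf_K}(iv). So I would state the reduction to \eqref{Delta_n'}, remark that the identification $V_s(D)=V_{(s_1,\dots,s_{D-1})}(D-1)\otimes V_{s_D}$ makes the codomains agree, and conclude.
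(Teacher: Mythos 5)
Your argument is correct and coincides with the paper's own proof: unwind Definition~\ref{defn:rs(D)} to rewrite $r_{(s_1,\dots,s_{D-1})}(D-1)\otimes r_{s_D}$ as $(r_{s_1}\otimes\cdots\otimes r_{s_D})\circ(\Delta_{D-2}\otimes 1)$, then invoke $\Delta_{D-1}=(\Delta_{D-2}\otimes 1)\circ\Delta$ from \eqref{Delta_n'} to identify the composite with $r_s(D)$. Your extra remark that $V_s(D)=V_{(s_1,\dots,s_{D-1})}(D-1)\otimes V_{s_D}$ ensures the codomains match is a sensible, if implicit in the paper, bookkeeping point.
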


\begin{proof}
By Definition~\ref{defn:rs(D)} the map $r_{(s_1,s_2,\dots,s_{D-1})}(D-1)=(r_{s_1}\otimes r_{s_2}\otimes \cdots \otimes r_{s_{D-1}})\circ \Delta_{D-2}$. Hence
\begin{align*}
r_{(s_1,s_2,\dots,s_{D-1})}(D-1)\otimes r_{s_D}
&=
\left(
(r_{s_1}\otimes r_{s_2}\otimes \cdots \otimes r_{s_{D-1}})\circ \Delta_{D-2}
\right)
\otimes r_{s_D}
\\
&=(r_{s_1}\otimes r_{s_2}\otimes \cdots \otimes r_{s_D})\circ (\Delta_{D-2}\otimes 1).
\end{align*}
By \eqref{Delta_n'}, the map $\Delta_{D-1}=(\Delta_{D-2}\otimes 1)\circ \Delta$. Combined with Definition \ref{defn:rs(D)}, the following diagram commutes:
\[
\begin{tikzpicture}
\matrix(m)[matrix of math nodes,
row sep=4em, column sep=4em,
text height=1.5ex, text depth=0.25ex]
{
\K
&\K\otimes \K
&\K^{\otimes D}
\\
&\End(V_s(D))
&
\\
};
\path[->,font=\scriptsize,>=angle 90]
(m-1-1) edge[bend right] node[left] {$r_s(D)$} (m-2-2)
(m-1-1) edge node[above] {$\Delta$} (m-1-2)
(m-1-1) edge[bend left=40] node[above] {$\Delta_{D-1}$} (m-1-3)
(m-1-2) edge node[midway] {$r_{(s_1,s_2,\dots,s_{D-1})}(D-1)\otimes r_{s_D}$} (m-2-2)
(m-1-2) edge node[above] {$\Delta_{D-2}\otimes 1$} (m-1-3)
(m-1-3) edge[bend left] node[right] {$r_{s_1}\otimes r_{s_2}\otimes \cdots \otimes r_{s_D}$} (m-2-2);
\end{tikzpicture}
\]
 The proposition follows.
\end{proof}

\begin{Proposition}\label{prop:rs(D)A&B}
For any $s\in \{0,1\}^D$, the representation $r_s(D)\colon \K\to \End(V_s(D))$ maps
\begin{align}
&A\mapsto \frac{1}{q}\A(D)|_{V_s(D)}
+\frac{D}{q}-\frac{1}{2}\sum_{i=1}^D s_i,\label{rs(D)(A)}
\\
&B\mapsto \frac{1}{q}\A^*(D)|_{V_s(D)}
+\frac{D}{q}-\frac{1}{2}\sum_{i=1}^D s_i.\label{rs(D)(B)}
\end{align}
\end{Proposition}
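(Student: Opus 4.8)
The plan is to prove Proposition~\ref{prop:rs(D)A&B} by induction on $D$, using Proposition~\ref{prop:rec_rs(D)} to reduce the inductive step to the comultiplication formula for $\K$.

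\textbf{Base case.} For $D=1$ the claim is exactly Lemma~\ref{lem:r0&r1}: when $s=(0)$ the formula \eqref{rs(D)(A)} reads $A\mapsto \frac{1}{q}\A|_{V_0}+\frac{1}{q}$ and $B\mapsto \frac{1}{q}\A^*|_{V_0}+\frac{1}{q}$, which is $r_0$; when $s=(1)$ it reads $A\mapsto \frac{1}{q}\A|_{V_1}+\frac{1}{q}-\frac{1}{2}$ and similarly for $B$, which is $r_1$. So the base case holds by definition of $r_0$ and $r_1$.

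\textbf{Inductive step.} Assume the formulas hold for $D-1$ (and for every string in $\{0,1\}^{D-1}$). Fix $s\in\{0,1\}^D$ and write $s'=(s_1,\dots,s_{D-1})$. By Proposition~\ref{prop:rec_rs(D)} we have $r_s(D)=(r_{s'}(D-1)\otimes r_{s_D})\circ\Delta$. Applying this to $A$ and using $\Delta(A)=A\otimes 1+1\otimes A$ from Lemma~\ref{lem:Hopf_K}(ii),
\[
r_s(D)(A)=r_{s'}(D-1)(A)\otimes \I(1)+\I(D-1)\otimes r_{s_D}(A),
\]
where here I view each factor as acting on $V_{s'}(D-1)$ and $V_{s_D}$ respectively, and the identity operators are the restrictions of $\I(D-1)$ and $\I$. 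By the induction hypothesis $r_{s'}(D-1)(A)=\frac{1}{q}\A(D-1)|_{V_{s'}(D-1)}+\frac{D-1}{q}-\frac12\sum_{i=1}^{D-1}s_i$, and by the base case $r_{s_D}(A)=\frac{1}{q}\A|_{V_{s_D}}+\frac{1}{q}-\frac12 s_D$. Substituting and collecting the scalar terms gives
\[
r_s(D)(A)=\frac1q\bigl(\A(D-1)\otimes\I+\I(D-1)\otimes\A\bigr)\Big|_{V_s(D)}+\frac{D}{q}-\frac12\sum_{i=1}^D s_i.
\]
Now Lemma~\ref{lem:A(D)} identifies $\A(D-1)\otimes\I+\I(D-1)\otimes\A$ with $\A(D)$, and since $V_s(D)=V_{s'}(D-1)\otimes V_{s_D}$ is invariant under this operator, the restriction agrees with $\A(D)|_{V_s(D)}$. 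This yields \eqref{rs(D)(A)}. The computation for $B$ is identical, with $\A$ replaced by $\A^*$ throughout and Lemma~\ref{lem:A*(D)} in place of Lemma~\ref{lem:A(D)}; this gives \eqref{rs(D)(B)}.

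\textbf{Anticipated obstacle.} There is no serious obstacle; the proof is a bookkeeping induction. The one point requiring a little care is the identification of the ambient operators: one must check that $\A(D-1)|_{V_{s'}(D-1)}\otimes\I$ (acting on $V_{s'}(D-1)\otimes V_{s_D}$) coincides with $\bigl(\A(D-1)\otimes\I(1)\bigr)|_{V_s(D)}$, which holds because $V_{s'}(D-1)$ is $\A(D-1)$-invariant (as follows from Lemma~\ref{lem:r0&r1} together with Proposition~\ref{prop:rec_rs(D)} applied inductively, or directly from Lemma~\ref{lem:A(D)} and the definition of $V_s(D)$), and similarly for $\A^*$. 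Once this invariance is in hand, the scalar arithmetic $\frac{D-1}{q}+\frac1q=\frac{D}{q}$ and $\sum_{i=1}^{D-1}s_i+s_D=\sum_{i=1}^{D}s_i$ closes the induction.
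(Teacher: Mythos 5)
Your proof is correct and follows essentially the same route as the paper: induction on $D$ with Lemma~\ref{lem:r0&r1} as base case, Proposition~\ref{prop:rec_rs(D)} together with $\Delta(A)=A\otimes 1+1\otimes A$ for the inductive step, and Lemmas~\ref{lem:A(D)} and~\ref{lem:A*(D)} to identify the resulting operator with $\A(D)|_{V_s(D)}$ and $\A^*(D)|_{V_s(D)}$. The only difference is that you spell out the invariance of $V_s(D)$ under these operators, which the paper leaves implicit.
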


\begin{proof}
We proceed by induction on $D$. By Lemma~\ref{lem:r0&r1}, the statement is true when $D=1$. Suppose that $D\geq 2$.
For convenience let $s'=(s_1,s_2,\dots,s_{D-1})\in \{0,1\}^{D-1}$.
By Lemma~\ref{lem:Hopf_K} and Proposition~\ref{prop:rec_rs(D)}, the map $r_s(D)$ sends $A$ to
\begin{gather*}
r_{s'}(D-1)(A)\otimes 1+1\otimes r_{s_D}(A).
\end{gather*}
Applying the induction hypothesis the above element is equal to
\begin{gather*}
\bigg(\frac{1}{q}\A(D-1)|_{V_{s'}(D-1)}+\frac{D-1}{q}-\frac{1}{2}\sum_{i=1}^{D-1} s_i\bigg)
\otimes 1+1\otimes\bigg(\frac{1}{q}\A|_{V_{s_D}}+\frac{1}{q}-\frac{s_D}{2}\bigg)
\\ \qquad
{}=\frac{\A(D-1)|_{V_{s'}(D-1)}\otimes 1+
1\otimes \A|_{V_{s_D}}}{q}+\frac{D}{q}-\frac{1}{2}\sum_{i=1}^D s_i.
\end{gather*}
By Lemma~\ref{lem:A(D)}, the first term in the right-hand side of the above equation equals $\frac{1}{q} \A(D)|_{V_s(D)}$. Hence \eqref{rs(D)(A)} holds.
By a similar argument, \eqref{rs(D)(B)} holds.
The proposition follows.
\end{proof}

In light of Proposition~\ref{prop:rs(D)A&B}, the $\T(D)$-module $V_s(D)$ is a $\K$-module for all $s\in \{0,1\}^D$. Combined with Lemma~\ref{lem:decV(D)_HD}, the standard $\T(D)$-module $V(D)$ is a $\K$-module.

\begin{Lemma}\label{lem:K1p}
Let $p$ be a positive integer. Then the $\K$-module $L_1^{\otimes p}$ is isomorphic to
\[
\bigoplus_{k=0}^{\lfloor\frac{p}{2}\rfloor}
\frac{p-2k+1}{p-k+1}{p\choose k}\cdot L_{p-2k}.
\]
\end{Lemma}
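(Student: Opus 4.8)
The plan is to prove the isomorphism
\[
L_1^{\otimes p}\cong \bigoplus_{k=0}^{\lfloor p/2\rfloor}\frac{p-2k+1}{p-k+1}\binom{p}{k}\cdot L_{p-2k}
\]
by induction on $p$, using Theorem~\ref{thm:CGrule_K} (the Clebsch--Gordan rule for $\K$) to peel off one factor of $L_1$ at a time. Since $L_1^{\otimes p}=L_1^{\otimes(p-1)}\otimes L_1$ as $\K$-modules (here the tensor product is taken with respect to the comultiplication $\Delta$ of $\K$ from Lemma~\ref{lem:Hopf_K}(ii), and associativity of the iterated tensor product is the coassociativity (H1) of $\Delta$), the induction hypothesis expresses $L_1^{\otimes(p-1)}$ as a sum of multiples of the $L_{p-1-2j}$ for $0\le j\le\lfloor(p-1)/2\rfloor$, and applying Theorem~\ref{thm:CGrule_K} to each summand $L_{p-1-2j}\otimes L_1\cong L_{p-2j}\oplus L_{p-2j-2}$ (valid since $p-1-2j\ge 1$ whenever this tensor product is not just $L_0\otimes L_1=L_1$) reassembles everything into a sum of multiples of the $L_{p-2k}$. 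The base cases $p=1$ and $p=2$ are immediate: for $p=1$ the right-hand side is $L_1$, and for $p=2$ it is $\frac{3}{3}\binom{2}{0}L_2\oplus\frac{1}{2}\binom{2}{1}L_0=L_2\oplus L_0$, matching $L_1\otimes L_1$.

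\textbf{Bookkeeping of the multiplicities.} Write the multiplicity of $L_{p-2k}$ in $L_1^{\otimes p}$ as $c_{p,k}$, so the claim is $c_{p,k}=\frac{p-2k+1}{p-k+1}\binom{p}{k}$ (with the convention $c_{p,k}=0$ for $k<0$ or $k>\lfloor p/2\rfloor$). From the inductive step above, $L_{p-2k}$ arises in $L_1^{\otimes(p-1)}\otimes L_1$ either as the ``top'' summand $L_{(p-1-2k)+1}$ coming from the copy of $L_{p-1-2k}$ in $L_1^{\otimes(p-1)}$, or as the ``bottom'' summand $L_{(p-1-2(k-1))-1}$ coming from the copy of $L_{p-2k+1}=L_{p-1-2(k-1)}$ in $L_1^{\otimes(p-1)}$. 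Hence we get the Pascal-type recursion
\[
c_{p,k}=c_{p-1,k}+c_{p-1,k-1},
\]
and it remains to check that the closed form $\frac{p-2k+1}{p-k+1}\binom{p}{k}$ (the ballot number, equivalently $\binom{p}{k}-\binom{p}{k-1}$) satisfies this recursion together with the initial condition $c_{0,0}=1$. This is the standard recursion for the ballot numbers and is a routine identity verification; one clean way is to use the form $c_{p,k}=\binom{p}{k}-\binom{p}{k-1}$, for which the recursion is just Pascal's rule applied to each binomial coefficient separately, and then confirm algebraically that $\binom{p}{k}-\binom{p}{k-1}=\frac{p-2k+1}{p-k+1}\binom{p}{k}$.

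\textbf{Main obstacle.} The only real point requiring care is the edge behaviour at the boundary of the summation range, i.e.\ making sure that when $p$ is even and $k=p/2$ the ``bottom'' contribution is accounted for correctly (the term $L_{p-2k+1}=L_1$ in $L_1^{\otimes(p-1)}$ contributes its bottom summand $L_0$), and that when $k=0$ there is no spurious ``bottom'' contribution; the convention $c_{p-1,-1}=0$ handles the latter, and since $p-2k+1=1$ still satisfies $1\ge 1$ the Clebsch--Gordan splitting $L_1\otimes L_1=L_2\oplus L_0$ still supplies the needed $L_0$ in the former. One should also note that the parity of $p$ is matched throughout: $L_1^{\otimes p}$ only involves $L_m$ with $m\equiv p\pmod 2$, consistent with $m=p-2k$. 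Apart from tracking these endpoints, the argument is a direct induction plus the ballot-number identity, so no deeper difficulty is expected.
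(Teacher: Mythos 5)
Your proof is correct and follows essentially the same route as the paper: induction on $p$, peeling off one tensor factor $L_1$ and applying the Clebsch--Gordan rule (Theorem~\ref{thm:CGrule_K}), then verifying the resulting multiplicity recursion $c_{p,k}=c_{p-1,k}+c_{p-1,k-1}$ against the closed form $\frac{p-2k+1}{p-k+1}\binom{p}{k}$. Your reformulation via $\binom{p}{k}-\binom{p}{k-1}$ is just a cleaner way of checking the same binomial identity the paper verifies directly, and your handling of the boundary cases matches the paper's case distinction for $L_0\otimes L_1$.
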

\begin{proof}
We proceed by induction on $p$. If $p=1$, then there is nothing to prove. Suppose that $p\geq 2$. Applying the induction hypothesis yields that the $\K$-module $L_1^{\otimes p}$ is isomorphic to
\begin{align*}
\bigg(\bigoplus_{k=0}^{\lfloor \frac{p-1}{2}\rfloor}
\frac{p-2k}{p-k}{p-1\choose k}\cdot L_{p-2k-1}\bigg)\otimes L_1.
\end{align*}
Applying the distributive law of $\otimes$ over $\oplus$ the above $\K$-module is isomorphic to
\begin{align*}
\bigoplus_{k=0}^{\lfloor \frac{p-1}{2}\rfloor}
\frac{p-2k}{p-k}{p-1\choose k} \cdot(L_{p-2k-1}\otimes L_1).
\end{align*}
By Theorem~\ref{thm:CGrule_K}, the $\K$-module $L_{p-2k-1}\otimes L_1$ is isomorphic to
\[
\begin{cases}
L_{p-2k}\oplus L_{p-2k-2}
&\text{if $0\leq k\leq \big\lfloor \frac{p}{2} \big\rfloor-1$},
\\
L_1&\text{else}
\end{cases}
\]
for all $k=0,1,\dots,\big\lfloor \frac{p-1}{2}\big\rfloor$. Hence the multiplicity of $L_{p-2k}$ in $L_1^{\otimes p}$ is equal to
\begin{gather*}
\frac{p-2k}{p-k}{p-1\choose k}+\frac{p-2k+2}{p-k+1}{p-1\choose k-1}=\frac{p-2k+1}{p-k+1}{p\choose k}
\end{gather*}
for all $k=0,1,\dots,\big\lfloor\frac{p}{2}\big\rfloor$. Here ${p-1 \choose k-1}$ is interpreted as $0$ when $k=0$.
The lemma follows.
\end{proof}

\begin{Lemma}\label{lem:dec_Vs(D)}
Let $p$ be an integer with $0\leq p\leq D$.
Suppose that $s\in \{0,1\}^D$ with $p=\sum_{i=1}^D s_i$. Then the $\K$-module $V_s(D)$ is isomorphic to
\[
\bigoplus_{k=0}^{\lfloor \frac{p}{2} \rfloor}
\frac{p-2k+1}{p-k+1}{p\choose k}
(q-2)^{D-p}\cdot L_{p-2k}.
\]
\end{Lemma}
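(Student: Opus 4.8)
The plan is to reduce the decomposition of $V_s(D)$ to the decomposition of a tensor power of $L_1$ tensored with copies of $L_0$, and then invoke Lemma~\ref{lem:K1p}. First I would note that by Definition~\ref{defn:Vs(D)}, $V_s(D)=V_{s_1}\otimes V_{s_2}\otimes\cdots\otimes V_{s_D}$, and by Lemma~\ref{lem:r0&r1} each factor $V_{s_i}$ is isomorphic as a $\K$-module to $L_0$ if $s_i=0$ and to $L_1$ if $s_i=1$. A crucial point is that the $\K$-module structure on $V_s(D)$ from Definition~\ref{defn:rs(D)} is precisely the one obtained by pulling back the outer tensor product $r_{s_1}\otimes\cdots\otimes r_{s_D}$ along the iterated comultiplication $\Delta_{D-1}$ of $\K$; this is exactly the definition of the $\K$-module tensor product. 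Hence, as $\K$-modules,
\[
V_s(D)\;\cong\; \bigotimes_{i=1}^D V_{s_i}\;\cong\; L_1^{\otimes p}\otimes L_0^{\otimes(D-p)},
\]
where $p=\sum_{i=1}^D s_i$, after permuting the tensor factors (the tensor product of $\K$-modules is, up to isomorphism, symmetric, since $\Delta$ is cocommutative because the defining comultiplication of the enveloping algebra $\K$ is cocommutative on the primitive generators $A$, $B$, $C$).

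Next I would dispose of the $L_0$ factors. Since $L_0$ is the one-dimensional $\K$-module on which $A$ and $B$ act as $0$ (from Lemma~\ref{lem:Vn_K} with $n=0$, noting $\alpha_0=0$, $\theta_0=0$), tensoring any $\K$-module with $L_0$ via $\Delta$ leaves it unchanged up to isomorphism: indeed $\Delta(A)=A\otimes 1+1\otimes A$ acts on $M\otimes L_0$ as $A\otimes 1$, and similarly for $B$, so $M\otimes L_0\cong M$ as $\K$-modules. Iterating, $L_1^{\otimes p}\otimes L_0^{\otimes(D-p)}\cong (q-2)^{D-p}\cdot L_1^{\otimes p}$, where the multiplicity $(q-2)^{D-p}$ arises because each $V_0$ is $(q-2)$-dimensional and is isomorphic to $(q-2)\cdot L_0$ by Lemma~\ref{lem:r0&r1}(i); alternatively one simply tracks dimensions. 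Then applying Lemma~\ref{lem:K1p} to $L_1^{\otimes p}$ gives
\[
V_s(D)\;\cong\;(q-2)^{D-p}\cdot\bigoplus_{k=0}^{\lfloor p/2\rfloor}\frac{p-2k+1}{p-k+1}\binom{p}{k}\cdot L_{p-2k}\;=\;\bigoplus_{k=0}^{\lfloor p/2\rfloor}\frac{p-2k+1}{p-k+1}\binom{p}{k}(q-2)^{D-p}\cdot L_{p-2k},
\]
which is the desired formula.

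The step I expect to require the most care is the identification of the intrinsic $\K$-module structure of $V_s(D)$ (defined via $r_s(D)$ and $\Delta_{D-1}$) with the abstract tensor product $V_{s_1}\otimes\cdots\otimes V_{s_D}$ of $\K$-modules, together with the commutativity/associativity bookkeeping needed to rearrange the tensor factors so that all the $L_1$'s are grouped first. The associativity is handled by the coassociativity identities \eqref{Delta_n} and \eqref{Delta_n'} for $\Delta_n$ (which is why Proposition~\ref{prop:rec_rs(D)} and its proof are set up as they are), and the cocommutativity by the symmetry of $\Delta$ on primitive elements; once these structural facts are in place, the computation is routine. An alternative, perhaps cleaner route that avoids invoking cocommutativity explicitly is to run an induction on $D$ directly: for the inductive step write $V_s(D)\cong V_{s'}(D-1)\otimes V_{s_D}$ as $\K$-modules by Proposition~\ref{prop:rec_rs(D)}, apply the induction hypothesis to $V_{s'}(D-1)$, distribute $\otimes$ over $\oplus$, and use Theorem~\ref{thm:CGrule_K} together with the $L_0$-absorption observation above to handle the two cases $s_D=1$ and $s_D=0$; the case $s_D=1$ reproduces exactly the binomial recursion already carried out in the proof of Lemma~\ref{lem:K1p}.
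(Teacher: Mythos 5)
Your proposal is correct and follows essentially the same route as the paper's own proof: identify $V_s(D)$ (via Definition~\ref{defn:rs(D)} and the cocommutative comultiplication) with $V_1^{\otimes p}\otimes V_0^{\otimes(D-p)}$, use Lemma~\ref{lem:r0&r1} to replace this by $(q-2)^{D-p}\cdot L_1^{\otimes p}$, and then apply Lemma~\ref{lem:K1p}. The only caveat is a harmless notational slip: for $s_i=0$ the factor $V_{s_i}$ is isomorphic to $(q-2)\cdot L_0$, not to $L_0$, but your subsequent multiplicity accounting (absorbing the trivial modules $L_0$ and extracting the factor $(q-2)^{D-p}$) corrects this and yields the stated decomposition.
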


\begin{proof}
By Definition \ref{defn:Vs(D)}, the $\K$-module $V_s(D)$ is isomorphic to
$V_1^{\otimes p}\otimes V_0^{\otimes (D-p)}$.
Applying Lemma~\ref{lem:r0&r1} the above $\K$-module is isomorphic to
$(q-2)^{D-p}\cdot L_1^{\otimes p}$.
Combined with Lemma~\ref{lem:K1p}, the lemma follows.
\end{proof}

\begin{proof}[Proof of Proposition~\ref{prop:Kpk}]
Let $p$ and $k$ be two integers with $0\leq p\leq D$ and $0\leq k\leq \lfloor \frac{p}{2}\rfloor$. Pick any $s\in \{0,1\}^D$ with $p=\sum_{i=1}^D s_i$.
By Lemma~\ref{lem:dec_Vs(D)}, the $\K$-module $V_s(D)$ contains the irreducible $\K$-module $L_{p-2k}$. Let $\{v_i\}_{i=0}^{p-2k}$ and $\{w_i\}_{i=0}^{p-2k}$ denote the two bases for $L_{p-2k}$ described in Lemmas \ref{lem:Vn_K} and \ref{lem2:Vn_K} with $n=p-2k$, respectively. In light of Proposition~\ref{prop:rs(D)A&B}, we may view the $\K$-submodule $L_{p-2k}$ of $V_s(D)$ as an irreducible $\T(D)$-module and denoted by $L_{p,k}(D)$.
To see~(i) and~(ii), one may evaluate the matrices representing $\A(D)$ and $\A^*(D)$ with respect to the bases $\{v_i\}_{i=0}^{p-2k}$ and $\{w_i\}_{i=0}^{p-2k}$ for $L_{p,k}(D)$, respectively. The proposition follows.
\end{proof}

\begin{proof}[Proof of Theorem~\ref{thm:dec_T(D)module}]
Let $p$ be any integer with $0\leq p\leq D$.
By Lemma~\ref{lem:dec_Vs(D)}, for any $s\in \{0,1\}^D$ with $p=\sum_{i=1}^D s_i$ the $\T(D)$-submodule $V_s(D)$ of $V(D)$ is isomorphic to
\[
\bigoplus_{k=0}^{\lfloor \frac{p}{2}\rfloor}
\frac{p-2k+1}{p-k+1}{p\choose k}(q-2)^{D-p}\cdot L_{p,k}(D).
\]
Combined with Lemma~\ref{lem:decV(D)_HD}, the result follows.
\end{proof}

\begin{proof}[Proof of Theorem~\ref{thm:T(D)irrmodule}]
Since the standard $\T(D)$-module $V(D)$ contains all irreducible \linebreak $\T(D)$-modules up to isomorphism,
 the map $\mathcal E$ is onto.
 Suppose that there are two pairs $(p,k)$ and $(p',k')$ in $\mathbf P(D)$ such that the irreducible $\T(D)$-module $L_{p,k}(D)$ is isomorphic to $L_{p',k'}(D)$. Since they have the same dimension, it follows that
\begin{gather}\label{e1}
p-2k=p'-2k'.
\end{gather}
Since $\A^*(D)$ has the same eigenvalues in $L_{p,k}(D)$ and $L_{p',k'}(D)$,
it follows from Proposition~\ref{prop:Kpk} that $p-k=p'-k'$.
Combined with~\eqref{e1}, this yields that $(p,k)=(p',k')$. Therefore, $\mathcal E$ is one-to-one.
\end{proof}

\begin{Corollary}[{\cite[Corollary 3.7]{Hamming2006}}]\label{cor:dimT(D)}
The algebra $\T(D)$ is isomorphic to
\[
\bigoplus_{p=0}^D
\bigoplus_{k=0}^{\lfloor \frac{p}{2}\rfloor}
\Mat_{p-2k+1}(\C).
\]
Moreover, $\dim \T(D)={D+4\choose 4}$.
\end{Corollary}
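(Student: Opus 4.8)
The plan is to read off the structure of $\T(D)$ directly from the Artin--Wedderburn decomposition recorded in Subsection~\ref{s:A&A*_Hamming} of Section~\ref{s:Hamming} (the general fact that a finite-dimensional semisimple algebra is the direct sum of the endomorphism algebras of its non-isomorphic irreducible modules) together with the classification in Theorem~\ref{thm:T(D)irrmodule}. Since $\T(D)$ is finite-dimensional and semisimple, it is isomorphic to $\bigoplus_W \End(W)$, where $W$ ranges over a complete set of representatives of the isomorphism classes of irreducible $\T(D)$-modules. By Theorem~\ref{thm:T(D)irrmodule} these representatives are exactly the modules $L_{p,k}(D)$ with $(p,k)\in\mathbf P(D)$, and by Proposition~\ref{prop:Kpk} we have $\dim L_{p,k}(D)=p-2k+1$, so $\End(L_{p,k}(D))\cong\Mat_{p-2k+1}(\C)$. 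Summing over $\mathbf P(D)$ yields the asserted isomorphism
\[
\T(D)\cong\bigoplus_{p=0}^D\bigoplus_{k=0}^{\lfloor p/2\rfloor}\Mat_{p-2k+1}(\C).
\]

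For the dimension formula I would simply count: $\dim\T(D)=\sum_{p=0}^D\sum_{k=0}^{\lfloor p/2\rfloor}(p-2k+1)^2$. First I would evaluate the inner sum for fixed $p$. As $k$ runs from $0$ to $\lfloor p/2\rfloor$, the integer $p-2k+1$ runs through $p+1,\,p-1,\,\dots$ down to $1$ or $2$ according as $p$ is even or odd, so a short computation gives $\sum_{k=0}^{\lfloor p/2\rfloor}(p-2k+1)^2={p+3\choose 3}$; the cleanest way to see this is by induction on $p$ with step $2$ from the base cases $p=0,1$, comparing the sums for $p$ and $p-2$ term by term to get the telescoping relation $\sum_{k=0}^{\lfloor p/2\rfloor}(p-2k+1)^2-\sum_{k=0}^{\lfloor (p-2)/2\rfloor}(p-2k-1)^2=(p+1)^2$, which matches ${p+3\choose 3}-{p+1\choose 3}=(p+1)^2$. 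Then the outer sum is $\sum_{p=0}^D{p+3\choose 3}={D+4\choose 4}$ by the hockey-stick identity.

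I do not anticipate a real obstacle here; essentially everything has been done in Theorem~\ref{thm:T(D)irrmodule} and Proposition~\ref{prop:Kpk}, and what remains is bookkeeping. The one place that needs a little care is the inner identity $\sum_{k}(p-2k+1)^2={p+3\choose 3}$, which can alternatively be dispatched by splitting into the even and odd cases and invoking the standard formulas for the sum of squares of the first several odd, respectively even, positive integers. The multiplicities in Theorem~\ref{thm:dec_T(D)module} then provide a further consistency check on the module structure, although they are not needed for this corollary.
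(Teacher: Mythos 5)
Your proposal is correct and follows essentially the same route as the paper: Wedderburn semisimplicity plus the classification in Theorem~\ref{thm:T(D)irrmodule} and the dimension count $\dim L_{p,k}(D)=p-2k+1$ from Proposition~\ref{prop:Kpk}, then the identity $\sum_{k=0}^{\lfloor p/2\rfloor}(p-2k+1)^2=\binom{p+3}{3}$ and the hockey-stick sum $\sum_{p=0}^D\binom{p+3}{3}=\binom{D+4}{4}$. The only difference is that you spell out the verification of the inner identity, which the paper states without detail.
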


\begin{proof}
By Theorem~\ref{thm:T(D)irrmodule}, the algebra $\T(D)$ is isomorphic to
$\bigoplus_{p=0}^D
\bigoplus_{k=0}^{\lfloor \frac{p}{2}\rfloor}
\End(L_{p,k}(D))$.
Hence $\dim \T(D)$ is equal to
\[
\sum_{p=0}^D
\sum_{k=0}^{\lfloor \frac{p}{2}\rfloor}
(p-2k+1)^2
=\sum_{p=0}^D {p+3\choose 3}
={D+4\choose 4}.
\]
The corollary follows.
\end{proof}

\appendix
\section{Restatements of Proposition~\ref{prop:Kpk} and Theorems~\ref{thm:dec_T(D)module}, \ref{thm:T(D)irrmodule}}\label{s:restate}

Recall the irreducible $\T(D)$-module $L_{p,k}(D)$ from Proposition~\ref{prop:Kpk}. Let $r$, $r^*$, $d$, $d^*$ denote the endpoint, dual endpoint, diameter, dual diameter of $L_{p,k}(D)$ respectively. It is known from \cite[p.~197]{TerAlgebraIII} that $\big\lceil \frac{D-d}{2}\big\rceil\leq r, r^* \leq D-d$.
From the results of Section~\ref{s:A&A*_Hamming}, we see that
\begin{align*}
r=r^*=D+k-p,\qquad
d=d^*=p-2k.
\end{align*}
In terms of the parameters $r$ and $d$, the parameters $p$ and $k$ read as
\begin{align*}
p=2D-d-2r,\qquad
k=D-d-r.
\end{align*}
Thus we can restate Proposition~\ref{prop:Kpk} and Theorems~\ref{thm:dec_T(D)module}, \ref{thm:T(D)irrmodule} as follows:

\begin{Proposition}\label{prop2:Kpk}
Let $D$ be a positive integer.
For any integers $d$ and $r$ with $0\leq d\leq D$ and $\big\lceil \frac{D-d}{2}\big\rceil\leq r\leq D-d$, there exists a $(d+1)$-dimensional irreducible $\T(D)$-module $M_{d,r}(D)$ satisfying the following conditions:
\begin{enumerate}\itemsep=0pt
\item[$(i)$] There exists a basis for $M_{d,r}(D)$ with respect to which the matrices representing $\A(D)$ and~$\A^*(D)$ are
\begin{gather*}
\begin{pmatrix}
\alpha_0 &\gamma_1 & & &{\bf 0}
\\
\beta_0 &\alpha_1 &\gamma_2
\\
&\beta_1 &\alpha_2 &\ddots
 \\
& &\ddots &\ddots &\gamma_d
 \\
{\bf 0} & & &\beta_{d-1} &\alpha_d
\end{pmatrix}\!,
\qquad
\begin{pmatrix}
\theta_0 & & & &{\bf 0}
\\
 &\theta_1
\\
 & &\theta_2
 \\
 & & &\ddots
 \\
{\bf 0} & & & &\theta_d
\end{pmatrix}\!,
\end{gather*}
respectively.

\item[$(ii)$] There exists a basis for $M_{d,r}(D)$ with respect to which the matrices representing $\A(D)$ and $\A^*(D)$ are
\begin{gather*}
\begin{pmatrix}
\theta_0 & & & &{\bf 0}
\\
 &\theta_1
\\
 & &\theta_2
 \\
 & & &\ddots
 \\
{\bf 0} & & & &\theta_d
\end{pmatrix}\!,
\qquad
\begin{pmatrix}
\alpha_0 &\gamma_1 & & &{\bf 0}
\\
\beta_0 &\alpha_1 &\gamma_2
\\
&\beta_1 &\alpha_2 &\ddots
 \\
& &\ddots &\ddots &\gamma_d
 \\
{\bf 0} & & &\beta_{d-1} &\alpha_d
\end{pmatrix}\!,
\end{gather*}
respectively.
\end{enumerate}
Here the parameters $\{\alpha_i\}_{i=0}^d$,
$\{\beta_i\}_{i=0}^{d-1}$, $\{\gamma_i\}_{i=1}^d$, $\{\theta_i\}_{i=0}^d$ are as follows:
\begin{alignat*}{3}
&\alpha_i=(D-d+i-r)(q-1)-i-r\qquad
&&\text{for}\quad i=0,1,\dots,d,&
\\
&\beta_i=i+1\qquad
&&\text{for}\quad i=0,1,\dots,d-1,&
\\
&\gamma_i=(q-1)(d-i+1)\qquad
&&\text{for}\quad i=1,2,\dots,d,&
\\
&\theta_i=D(q-1)-q(i+r)\qquad
&&\text{for}\quad i=0,1,\dots,d.&
\end{alignat*}
\end{Proposition}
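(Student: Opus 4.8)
The plan is to derive Proposition~\ref{prop2:Kpk} as a direct reparametrization of Proposition~\ref{prop:Kpk} via the substitution that appears just above the statement, namely $p=2D-d-2r$ and $k=D-d-r$. First I would record the inverse correspondence: given $(d,r)$ with $0\le d\le D$ and $\lceil\frac{D-d}{2}\rceil\le r\le D-d$, the integers $p$ and $k$ defined by these formulas satisfy $0\le p\le D$ and $0\le k\le\lfloor\frac{p}{2}\rfloor$, so that the pair $(p,k)$ lies in the index set of Proposition~\ref{prop:Kpk}; conversely every admissible $(p,k)$ arises this way from a unique admissible $(d,r)$, since $d=p-2k$ and $r=D+k-p$. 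This is the combinatorial bookkeeping underlying the passage from $\mathbf P(D)$ to the $(d,r)$-parametrization, and it uses only elementary inequalities together with $d=p-2k\ge 0\iff k\le\lfloor p/2\rfloor$ and $r\le D-d\iff k\ge 0$, $r\ge\lceil\frac{D-d}{2}\rceil\iff p\le D$.

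Next I would set $M_{d,r}(D)=L_{p,k}(D)$ for the corresponding $(p,k)$. Its dimension is $p-2k+1=d+1$ by Proposition~\ref{prop:Kpk}, and the two displayed bases of Proposition~\ref{prop2:Kpk}(i),(ii) are literally those of Proposition~\ref{prop:Kpk}(i),(ii), with $p-2k$ replaced by $d$ throughout. It then remains to check that the four parameter families agree after substitution. For the $\beta_i$ and $\gamma_i$ this is immediate: $\beta_i=i+1$ is unchanged, and $\gamma_i=(q-1)(p-i-2k+1)=(q-1)(d-i+1)$. For $\theta_i$ I would compute $q(p-i-k)-D=q\big((2D-d-2r)-i-(D-d-r)\big)-D=q(D-r-i)-D=D(q-1)-q(i+r)$. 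For $\alpha_i$ I would compute $(q-2)(i+k)+p-D=(q-2)\big(i+D-d-r\big)+(2D-d-2r)-D=(q-1)(D-d+i-r)-(i+r)$, after collecting the $q$-terms and the constant terms; this is exactly the stated $\alpha_i$.

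Finally, the restated decomposition and classification follow by the same change of variables. The multiplicity $\frac{p-2k+1}{p-k+1}\binom{D}{p}\binom{p}{k}(q-2)^{D-p}$ of $L_{p,k}(D)$ in Theorem~\ref{thm:dec_T(D)module} is rewritten in terms of $d$ and $r$ using $p-2k+1=d+1$, $p-k+1=D-r+1$, $p=2D-d-2r$, and $D-p=d+2r-D$; summing over $(d,r)$ in the new range reproduces the sum over $(p,k)\in\mathbf P(D)$ because the correspondence is a bijection. Likewise the bijection $\mathcal E$ of Theorem~\ref{thm:T(D)irrmodule} transports to a bijection between the set of admissible $(d,r)$ and $\mathbf M(D)$. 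I do not anticipate a genuine obstacle here: the only thing that needs care is verifying the range conditions match up under the substitution (in particular the nontrivial direction $p\le D\iff r\ge\lceil\frac{D-d}{2}\rceil$), and that the $\alpha_i$ identity is carried out without sign errors; both are short and mechanical.
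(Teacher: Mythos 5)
Your proposal is correct and follows essentially the same route as the paper: the appendix obtains Proposition~\ref{prop2:Kpk} from Proposition~\ref{prop:Kpk} precisely by the substitution $p=2D-d-2r$, $k=D-d-r$ (equivalently $d=p-2k$, $r=D+k-p$), setting $M_{d,r}(D)=L_{p,k}(D)$. Your verification of the range bijection and of the transformed parameters $\alpha_i$, $\beta_i$, $\gamma_i$, $\theta_i$ is accurate, so nothing is missing.
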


\begin{Theorem}\label{thm2:dec_T(D)module}
Let $D$ be a positive integer. Then the standard $\T(D)$-module $V(D)$ is isomorphic~to
\begin{gather*}
\bigoplus_{d=0}^D
\bigoplus_{r=\lceil \frac{D-d}{2}\rceil}^{D-d}
\frac{d+1}{D-r+1}{D\choose 2D-d-2r}{2D-d-2r\choose D-d-r}
(q-2)^{d-D+2r}\cdot M_{d,r}(D).
\end{gather*}
\end{Theorem}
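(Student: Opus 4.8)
The plan is to observe that Theorem~\ref{thm2:dec_T(D)module} is nothing more than Theorem~\ref{thm:dec_T(D)module} rewritten under the change of variables that replaces the pair $(p,k)$ by the pair $(d,r)$ via $p = 2D-d-2r$, $k = D-d-r$; equivalently $d = p-2k$, $r = D+k-p$. So the first step is to record that this is a bijection between the index set $\mathbf P(D)$ of Theorem~\ref{thm:T(D)irrmodule} and the set of pairs $(d,r)$ with $0\le d\le D$ and $\lceil\frac{D-d}{2}\rceil\le r\le D-d$. Checking that the ranges match is elementary: $p\ge 0$ becomes $2D-d \ge 2r$, i.e.\ $r\le D-\frac d2$, which for integers is $r\le D-d+\lfloor\frac d2\rfloor$; combined with $k\le\lfloor\frac p2\rfloor$, which after substitution reads $r\ge\lceil\frac{D-d}{2}\rceil$, and with $k\ge 0$, which gives $r\le D-d$, one recovers exactly the stated range. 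I would dispatch this in a sentence or two.

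The second step is to identify the summands. By definition $M_{d,r}(D)$ is declared (in Proposition~\ref{prop2:Kpk}, whose proof precedes this one) to be $L_{p,k}(D)$ for the corresponding $(p,k)$; its dimension is $d+1 = p-2k+1$, consistent with Proposition~\ref{prop:Kpk}. So the only thing left is to verify that the multiplicity coefficient in Theorem~\ref{thm:dec_T(D)module},
\[
\frac{p-2k+1}{p-k+1}\binom{D}{p}\binom{p}{k}(q-2)^{D-p},
\]
becomes, after substituting $p=2D-d-2r$ and $k=D-d-r$,
\[
\frac{d+1}{D-r+1}\binom{D}{2D-d-2r}\binom{2D-d-2r}{D-d-r}(q-2)^{d-D+2r}.
\]
This is a direct term-by-term match: $p-2k+1 = d+1$; $p-k+1 = (2D-d-2r)-(D-d-r)+1 = D-r+1$; $\binom{D}{p}=\binom{D}{2D-d-2r}$; $\binom{p}{k}=\binom{2D-d-2r}{D-d-r}$; and $D-p = D-(2D-d-2r) = d-D+2r$. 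I would simply exhibit these five identities.

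The remaining work is purely bookkeeping on the index of summation: the double sum $\bigoplus_{p=0}^D\bigoplus_{k=0}^{\lfloor p/2\rfloor}$ in Theorem~\ref{thm:dec_T(D)module} is reindexed by the bijection above into $\bigoplus_{d=0}^D\bigoplus_{r=\lceil (D-d)/2\rceil}^{D-d}$, and the isomorphism class of each summand and its multiplicity are preserved. I do not anticipate a genuine obstacle here; the only mild care needed is in the reindexing of the bounds, i.e.\ confirming that as $(p,k)$ ranges over $\mathbf P(D)$ the pair $(d,r)$ ranges exactly over the index set in the statement and not a proper subset or superset. Once that is in hand the theorem follows immediately from Theorem~\ref{thm:dec_T(D)module}.
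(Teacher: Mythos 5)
Your proposal is correct and is essentially the paper's own route: the paper obtains Theorem~\ref{thm2:dec_T(D)module} simply by restating Theorem~\ref{thm:dec_T(D)module} under the substitution $p=2D-d-2r$, $k=D-d-r$ (equivalently $d=p-2k$, $r=D+k-p$), with $M_{d,r}(D)=L_{p,k}(D)$ and exactly the term-by-term coefficient identities you list. One minor slip worth fixing: the lower bound $r\ge\lceil\frac{D-d}{2}\rceil$ comes from the constraint $p\le D$ (while $k\le\lfloor\frac{p}{2}\rfloor$ translates to $d\ge 0$), not from $k\le\lfloor\frac{p}{2}\rfloor$ as written, but this mislabeling does not affect the correctness of the reindexing.
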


We illustrate Theorem~\ref{thm2:dec_T(D)module} for $D=3$ and $D=4$:
\begin{table}[h!]
\centering
\extrarowheight=3pt
\begin{tabular}{c|c|c|c|c}
\toprule
$D$ &$d$ &$r$ &The support of $M_{d,r}(D)$ &The multiplicity of $M_{d,r}(D)$ in $V(D)$
\\
\midrule
\multirow{6}{*}{$3$} &$3$ &$0$ &$\{0,1,2,3\}$ &$1$
\\
\cline{2-5}
 &$2$ &$1$ &$\{1,2,3\}$ &$3(q-2)$
\\
\cline{2-5}
 &\multirow{2}{*}{$1$} &$1$ &$\{1,2\}$ &$2$
\\
\cline{3-5}
 & &$2$ &$\{2,3\}$ &$3(q-2)^2$
\\
\cline{2-5}
 &\multirow{2}{*}{$0$} &$2$ &$\{2\}$ &$3(q-2)$
 \\
\cline{3-5}
 & &$3$ &$\{3\}$ &$(q-2)^3$
 \\
 \midrule
%
\multirow{9}{*}{$4$} &$4$ &$0$ &$\{0,1,2,3,4\}$ &$1$
\\
\cline{2-5}
 &$3$ &$1$ &$\{1,2,3,4\}$ &$4(q-2)$
\\
\cline{2-5}
 &\multirow{2}{*}{$2$} &$1$ &$\{1,2,3\}$ &$3$
\\
\cline{3-5}
 & &$2$ &$\{2,3,4\}$ &$6(q-2)^2$
\\
\cline{2-5}
 &\multirow{2}{*}{$1$} &$2$ &$\{2,3\}$ &$8(q-2)$
 \\
\cline{3-5}
 & &$3$ &$\{3,4\}$ &$4(q-2)^3$
 \\
 \cline{2-5}
 &\multirow{3}{*}{$0$} &$2$ &$\{2\}$ &$2$
 \\
 \cline{3-5}
 & &$3$ &$\{3\}$ &$6(q-2)^2$
 \\
 \cline{3-5}
 & &$4$ &$\{4\}$ &$(q-2)^4$
 \\
 \bottomrule
\end{tabular}
\end{table}

\begin{Theorem}\label{thm2:T(D)irrmodule}
Let $D$ be a positive integer. Let $\mathbf P(D)$ denote the set consisting of all pairs $(d,r)$ of integers with $0\leq d\leq D$ and $\big\lceil \frac{D-d}{2}\big\rceil\leq r\leq D-d$. Let $\mathbf M(D)$ denote the set of all isomorphism classes of irreducible $\T(D)$-modules. Then there exists a bijection $\mathbf P(D)\to \mathbf M(D)$ given by
\begin{gather*}
(d,r) \mapsto \text{the isomorphism class of $M_{d,r}(D)$}
\end{gather*}
for all $(d,r)\in \mathbf P(D)$.
\end{Theorem}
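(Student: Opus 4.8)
The plan is to deduce Theorem~\ref{thm2:T(D)irrmodule} directly from Theorem~\ref{thm:T(D)irrmodule} by recognizing the passage from the parameters $(p,k)$ to the parameters $(d,r)$ as a bijection between the two index sets. Recall that, by construction (cf.\ the discussion preceding Proposition~\ref{prop2:Kpk}), the module $M_{d,r}(D)$ is nothing but $L_{p,k}(D)$ with $p=2D-d-2r$ and $k=D-d-r$, while conversely the endpoint and diameter of $L_{p,k}(D)$ are $r=D+k-p$ and $d=p-2k$.

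First I would verify that the maps
\[
\Phi\colon (p,k)\longmapsto (p-2k,\;D-p+k),
\qquad
\Psi\colon (d,r)\longmapsto (2D-d-2r,\;D-d-r)
\]
are mutually inverse bijections between the set of pairs $(p,k)$ of integers with $0\le p\le D$ and $0\le k\le\big\lfloor\frac{p}{2}\big\rfloor$ (the index set of Theorem~\ref{thm:T(D)irrmodule}) and the set $\mathbf P(D)$ of Theorem~\ref{thm2:T(D)irrmodule}. The identities $\Psi\circ\Phi=\mathrm{id}$ and $\Phi\circ\Psi=\mathrm{id}$ are immediate algebra. That $\Phi$ lands in $\mathbf P(D)$ amounts to $0\le p-2k\le D$, $D-p+k\le D-(p-2k)$, and $D-p+k\ge\big\lceil\frac{D-(p-2k)}{2}\big\rceil$; the first two are routine, and for the third one observes $(D-p+k)-\frac{D-(p-2k)}{2}=\frac{D-p}{2}\ge 0$, so that $D-p+k\ge\frac{D-(p-2k)}{2}$, and since $D-p+k$ is an integer this forces $D-p+k\ge\big\lceil\frac{D-(p-2k)}{2}\big\rceil$. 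That $\Psi$ lands in the index set of Theorem~\ref{thm:T(D)irrmodule} is checked symmetrically.

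With this bijection in hand, the map $(d,r)\mapsto$ (isomorphism class of $M_{d,r}(D)$) of Theorem~\ref{thm2:T(D)irrmodule} factors as $\Psi$ followed by the map $(p,k)\mapsto$ (isomorphism class of $L_{p,k}(D)$); the latter is a bijection onto $\mathbf M(D)$ by Theorem~\ref{thm:T(D)irrmodule}, and $\Psi$ is a bijection of index sets, so the composite is a bijection onto $\mathbf M(D)$, which is exactly the assertion.

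I expect no genuine obstacle here: the only point requiring a little care is the interplay between the ceiling appearing in the description of $\mathbf P(D)$ and the floor appearing in the index set of Theorem~\ref{thm:T(D)irrmodule}, handled by the integrality remark above. If a self-contained argument is preferred, one may instead mimic the proof of Theorem~\ref{thm:T(D)irrmodule} verbatim: surjectivity of the map follows because, by Theorem~\ref{thm2:dec_T(D)module}, $V(D)$ contains every irreducible $\T(D)$-module up to isomorphism, and injectivity follows because $M_{d,r}(D)$ has dimension $d+1$ while, among the $M_{d',r'}(D)$ of a fixed dimension, the eigenvalue $\theta_0=D(q-1)-qr$ of $\A^*(D)$ recorded in Proposition~\ref{prop2:Kpk} determines $r$, hence the pair $(d,r)$.
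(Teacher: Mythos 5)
Your proposal is correct and follows essentially the same route as the paper: the paper obtains Theorem~\ref{thm2:T(D)irrmodule} from Theorem~\ref{thm:T(D)irrmodule} by the change of parameters $p=2D-d-2r$, $k=D-d-r$ (equivalently $d=p-2k$, $r=D+k-p$), which is exactly the bijection $\Psi=\Phi^{-1}$ you compose with the bijection $\mathcal E$ of Theorem~\ref{thm:T(D)irrmodule}. Your explicit verification of the range conditions (including the floor/ceiling interplay) just fills in details the paper leaves as a routine restatement.
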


By Theorem~\ref{thm2:T(D)irrmodule}, the structure of an irreducible $\T(D)$-module is determined by its endpoint and its diameter. Also we can restate Corollary \ref{cor:dimT(D)} as follows:
\begin{Corollary}
The algebra $\T(D)$ is isomorphic to
\[
\bigoplus_{d=0}^D\bigg(\bigg\lfloor \frac{D-d}{2}\bigg\rfloor+1\bigg)\cdot\Mat_{d+1}(\C).
\]
Moreover, $\dim \T(D)={D+4\choose 4}$.
\end{Corollary}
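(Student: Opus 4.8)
The plan is to mimic the proof of Corollary~\ref{cor:dimT(D)}, now using the $(d,r)$-parametrization supplied by Theorem~\ref{thm2:T(D)irrmodule}. Since $\T(D)$ is a finite-dimensional semisimple algebra, \cite[Theorem~25.10]{Wedderburn1962} gives $\T(D)\cong\bigoplus_{W}\End(W)$, the sum running over the isomorphism classes of irreducible $\T(D)$-modules. By Theorem~\ref{thm2:T(D)irrmodule} these classes are exactly those of the modules $M_{d,r}(D)$ for $(d,r)\in\mathbf P(D)$, and distinct pairs give non-isomorphic modules, so
\[
\T(D)\cong\bigoplus_{(d,r)\in\mathbf P(D)}\End\big(M_{d,r}(D)\big).
\]
By Proposition~\ref{prop2:Kpk} the module $M_{d,r}(D)$ has dimension $d+1$, hence $\End(M_{d,r}(D))\cong\Mat_{d+1}(\C)$.

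Next I would count, for each fixed $d$ with $0\leq d\leq D$, the number of $r$ with $(d,r)\in\mathbf P(D)$, that is, with $\big\lceil\frac{D-d}{2}\big\rceil\leq r\leq D-d$. This count is $(D-d)-\big\lceil\frac{D-d}{2}\big\rceil+1=\big\lfloor\frac{D-d}{2}\big\rfloor+1$, using the elementary identity $m-\lceil\frac{m}{2}\rceil=\lfloor\frac{m}{2}\rfloor$ valid for every nonnegative integer $m$. Grouping the summands of the Wedderburn decomposition according to the value of $d$ then yields
\[
\T(D)\cong\bigoplus_{d=0}^D\bigg(\bigg\lfloor\frac{D-d}{2}\bigg\rfloor+1\bigg)\cdot\Mat_{d+1}(\C).
\]

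For the dimension formula, since this corollary is a restatement of Corollary~\ref{cor:dimT(D)}, one may simply quote $\dim\T(D)=\binom{D+4}{4}$ from there. Alternatively one computes directly
\[
\dim\T(D)=\sum_{d=0}^D\bigg(\bigg\lfloor\frac{D-d}{2}\bigg\rfloor+1\bigg)(d+1)^2
\]
and checks that it equals $\binom{D+4}{4}$; for instance, applying the bijection $(d,r)\leftrightarrow(p,k)$ from the discussion preceding Theorem~\ref{thm2:T(D)irrmodule}, under which $d+1=p-2k+1$, turns the sum into $\sum_{(p,k)}(p-2k+1)^2$, which was evaluated in the proof of Corollary~\ref{cor:dimT(D)}.

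The argument is routine, being little more than an invocation of Wedderburn theory together with a reindexing. The only mildly delicate point is the counting step, namely converting the range $\big\lceil\frac{D-d}{2}\big\rceil\leq r\leq D-d$ into the multiplicity $\big\lfloor\frac{D-d}{2}\big\rfloor+1$, and this is an elementary fact about floors and ceilings, so no genuine obstacle arises.
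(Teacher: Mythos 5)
Your proposal is correct and follows essentially the same route as the paper: the paper obtains this corollary simply by reindexing Corollary~\ref{cor:dimT(D)} (itself proved via Theorem~\ref{thm:T(D)irrmodule} and the Wedderburn decomposition) under the change of parameters $d=p-2k$, $r=D+k-p$, which is exactly your counting argument showing that each $d$ occurs with multiplicity $\lfloor\frac{D-d}{2}\rfloor+1$. Your verification of the dimension formula, either by quoting Corollary~\ref{cor:dimT(D)} or by transporting the sum $\sum_{(p,k)}(p-2k+1)^2=\binom{D+4}{4}$ through the bijection, matches the paper's intent.
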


\subsection*{Acknowledgements}
The author would like to thank the anonymous referees for insightful suggestions to improve the paper and bring his attention to \cite{K&sl2}. Also, the author thanks Dr.\ Luc Vinet to bring his attention to \cite{Johnson:2021, Hamming:2021,Hamming:2015}.
The research is supported by the Ministry of Science and Technology of Taiwan under the project MOST 110-2115-M-008-008-MY2.

\pdfbookmark[1]{References}{ref}
\LastPageEnding

\end{document}